\newcommand{\eps}{\varepsilon}
\newcommand{\mc}[1]{\mathcal{#1}}
\renewcommand{\v}{\textup{\textsf{v}}}
\newcommand{\e}{\textup{\textsf{e}}}
\renewcommand{\d}{\textup{\textsf{d}}}
\theoremstyle{plain}
\newtheorem{thm}{Theorem}[section]
\newtheorem{lem}[thm]{Lemma}
\newtheorem{claim}{Claim}[thm]
\newtheorem{proposition}[thm]{Proposition}
\newtheorem{cor}[thm]{Corollary}
\newtheorem{conj}[thm]{Conjecture}
\newtheorem{que}[thm]{Question}
\noindent \emph{Proof.} {}{#1}{}}{\hfill
\theoremstyle{plain} 
\newcommand{\thistheoremname}{}
\newtheorem{genericthm}[section]{\thistheoremname}
\theoremstyle{definition}
\newcommand{\ex}{\operatorname{ex}}
\title{Breaking the degeneracy barrier for coloring graphs with no $K_t$ minor}
\author{Sergey Norin\thanks{Department of Mathematics and Statistics, McGill University. Email: {\tt sergey.norin@mcgill.ca}. Supported by an NSERC Discovery grant.}
\and 
 Luke Postle\thanks{Department of Combinatorics and Optimization, University of Waterloo, Waterloo, Ontario, Canada. Email: {\tt lpostle@uwaterloo.ca}. Canada Research Chair in Graph Theory. Partially supported by NSERC under Discovery Grant No. 2019-04304, the Ontario Early Researcher Awards program and the Canada Research Chairs program.}
\and
Zi-Xia Song\thanks{Department of Mathematics, University of Central Florida, Orlando, FL 32816, USA. Email: {\tt Zixia.Song@ucf.edu}. Supported by the National Science Foundation under Grant No. DMS-1854903.}}
\begin{document}

\maketitle

\begin{center}
	\emph{Dedicated to the memory of Robin Thomas}
\end{center}

\begin{abstract} 
	In 1943, Hadwiger conjectured that every graph with no $K_t$ minor is $(t-1)$-colorable for every $t\ge 1$.
	In the 1980s, Kostochka and Thomason independently proved that every graph with no $K_t$ minor has average degree $O(t\sqrt{\log t})$ and hence is $O(t\sqrt{\log t})$-colorable.  We show that every graph with no $K_t$ minor is $O(t(\log t)^{\beta})$-colorable for every $\beta > 1/4$, making the first improvement on the order of magnitude of the Kostochka-Thomason bound.
\end{abstract}

\section{Introduction}

All graphs in this paper are finite and simple. Given graphs $H$ and $G$, we say that $G$ has \emph{an $H$ minor} if a graph isomorphic to $H$ can be obtained from a subgraph of $G$ by contracting edges. We denote the complete graph on $t$ vertices by $K_t$.

In 1943 Hadwiger made the following famous conjecture.

\begin{conj}[Hadwiger's conjecture~\cite{Had43}]\label{Hadwiger} For every integer $t \geq 1$, every graph with no $K_{t}$ minor is $(t-1)$-colorable. 
\end{conj}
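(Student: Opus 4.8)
Since \cref{Hadwiger} is a famous open problem, what follows is a plan of attack together with an honest account of where it stalls. The natural framework is a minimal counterexample: let $G$ have no $K_t$ minor, satisfy $\chi(G)\ge t$, and have the fewest vertices among all such graphs. Then $\chi(G)=t$, $G$ is vertex-critical (so $\delta(G)\ge t-1$ and $G$ has no clique cutset), and for every edge $e$ the graph $G/e$ has a $K_t$ minor; by classical results of Dirac and Mader this last property forces $G$ to be locally fairly dense---every edge lies in many triangles---and to avoid small separators of restricted kinds. One then wants to leverage this near-$K_t$-minor local density against the global constraint of being $K_t$-minor-free to either build a $(t-1)$-coloring or produce the forbidden minor directly.

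For small $t$ this programme succeeds. When $t\le 4$ it is elementary, since $K_4$-minor-free graphs are $2$-degenerate. For $t=5$, Wagner's theorem decomposes any $K_5$-minor-free graph by clique-sums of order at most $3$ into planar graphs and copies of the Wagner graph, and the Four Color Theorem finishes the job; for $t=6$, Robertson, Seymour and Thomas show that a minimal counterexample would have to be an apex graph, which the Four Color Theorem again dispatches. The obvious generalization is to invoke the full Robertson--Seymour structure theorem: a suitably connected $K_t$-minor-free graph can be assembled by clique-sums from pieces that embed in a surface of bounded genus after deleting boundedly many apex vertices and attaching boundedly many vortices of bounded width. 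One would then color the bounded-genus part with few colors using Heawood-type and list-coloring bounds, spend a constant number of extra colors on the apices and vortices, and amalgamate across the clique-sum decomposition, matching colorings on the shared cliques.

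The hard part, and the reason this does not close the conjecture, is twofold. First, the constants supplied by the structure theorem are absolute: a ``bounded-genus plus $k$ apices'' graph can fail to be $(t-1)$-colorable when $k$ is comparable to $t$, but nothing in the structure theorem ties $k$ to $t$, and indeed highly connected $K_t$-minor-free graphs genuinely can be dense---random graphs have average degree $\Omega(t\sqrt{\log t})$ while avoiding $K_t$ minors---which is precisely the \emph{degeneracy barrier}: no greedy or degeneracy-based coloring can even reach a bound linear in $t$. Second, gluing colorings along clique-sums of order up to $t-1$ requires reconciling palettes on shared cliques of size close to $t$, and in every known approach this reconciliation is exactly where a super-constant factor is lost. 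A genuine proof of \cref{Hadwiger} therefore seems to require a coloring technique that is not degeneracy-driven, together with structural control far finer than is currently available; absent this, the present paper establishes only the asymptotic relaxation that $K_t$-minor-free graphs are $O(t(\log t)^\beta)$-colorable for every $\beta>1/4$.
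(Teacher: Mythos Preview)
The statement you were asked to prove is Hadwiger's conjecture itself, which the paper does \emph{not} prove; it is stated there as an open conjecture, and the paper's contribution is the weaker asymptotic bound of \cref{t:main}. You correctly recognize this and, rather than manufacturing a bogus argument, give an honest discussion of the known small cases, the structure-theorem approach, and the degeneracy barrier. That is the right call.

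Your summary is accurate and in fact more detailed than the paper's own introductory discussion: the paper notes the cases $t\le 6$ and the Kostochka--Thomason degeneracy bound, but does not spell out the clique-sum/structure-theorem obstruction the way you do. One small caution: your remark that contraction-criticality forces ``every edge lies in many triangles'' via Dirac and Mader is a bit loose---the precise local density statements for contraction-critical graphs are more delicate than that phrasing suggests---but since you are only sketching obstacles rather than claiming a proof, this does no harm. There is nothing to correct here; there is simply no proof in the paper to compare against.
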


Hadwiger's conjecture is widely considered among the most important problems in graph theory and has motivated numerous developments in graph coloring and graph minor theory. We briefly overview major progress towards the conjecture below, and refer the reader to a recent survey by Seymour~\cite{Sey16Survey} for further  background.

Hadwiger~\cite{Had43} and Dirac~\cite{Dirac52} independently showed that \cref{Hadwiger} holds for $t \leq 4$. Wagner~\cite{Wagner37} proved that for $t=5$ the conjecture is equivalent to the Four Color Theorem, which was subsequently proved by Appel and Haken~\cite{AH77,AHK77} using extensive computer assistance.  

Robertson, Seymour and  Thomas~\cite{RST93} went one step further and proved Hadwiger's conjecture for $t=6$, also by reducing it to the Four Color Theorem. Settling the conjecture for $t \geq 7$ appears to be extremely challenging, perhaps in part due to the absence of a transparent proof of  the Four Color Theorem.

Another notable challenging case of Hadwiger's conjecture is the case of graphs with no independent set of size three.
If $G$ is such a graph on $n$ vertices then properly coloring $G$ requires at least $n/2$ colors, and so  Hadwiger's conjecture implies that $G$ has a $K_{\lceil n/2 \rceil}$ minor. This is still open. In fact, as mentioned in~\cite{Sey16Survey}, it is not known whether there exists any $c>1/3$ such that every graph $G$ as above has a $K_{t}$ minor for some $t \geq cn$. 

The following natural weakening of Hadwiger's conjecture, which has been considered by several researchers, sidesteps the above challenges.  

\begin{conj}[Linear Hadwiger's conjecture~\cite{ReeSey98,Kaw07, KawMoh06}]\label{c:LinHadwiger} There exists $C>0$ such that for every integer $t \geq 1$, every graph with no $K_{t}$ minor is $Ct$-colorable. 
\end{conj}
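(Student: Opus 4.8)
\medskip
\noindent\textbf{A proposed route towards \cref{c:LinHadwiger}.}\quad
The plan is to study a hypothetical minimum counterexample and to force the missing $K_t$ minor out of its critical structure. Fix a large absolute constant $C$, to be optimised only at the end, and suppose for contradiction that $C$ does not witness the conjecture; let $t$ be least such that some graph with no $K_t$ minor is not $Ct$-colourable, and among all such graphs let $G$ have the fewest vertices. Then $G$ is $(Ct+1)$-vertex-critical, so $\delta(G) \ge Ct$, $G$ is $2$-connected, its low-degree vertices induce a graph of the restricted Gallai type, and it obeys the Kostochka--Yancey edge bound for critical graphs; moreover, by minimality of $t$, every graph with no $K_{t-1}$ minor is $C(t-1)$-colourable. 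On the other hand the Kostochka--Thomason bound gives $G$ average degree $O(t\sqrt{\log t})$, so there is a vertex $v$ with $Ct \le d(v) = O(t\sqrt{\log t})$; fix such a $v$ of minimum degree and put $H := G[N(v)]$. A $K_{t-1}$ minor in $H$ would extend, via the branch set $\{v\}$, to a $K_t$ minor in $G$, so $H$ has no $K_{t-1}$ minor; and since $G-v$ is $Ct$-colourable while $v$ is not colourable, every colour class of every $Ct$-colouring of $G-v$ meets $N(v)$.

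\medskip
The heart of the argument is a \emph{local dichotomy} applied to a ball $B$ of bounded (possibly adaptively chosen) radius around $v$. Either $G[B]$ is sparse --- average degree $O(t)$ --- in which case $G[B]$ is $O(t)$-degenerate, and one attempts, via a Kempe-chain / boundary-recolouring argument that exploits the $Ct$-colourability of $G-v$, to recolour $B$ so as to free a colour at $v$, contradicting $\chi(G) > Ct$; or $G[B]$ has average degree $\omega(t)$, and we mine it for a $K_t$ minor. In the dense case the goal is to produce $t-1$ connected, pairwise adjacent subsets of $G[B]$, each meeting $N(v)$: adding $\{v\}$ as a final branch set yields a $K_t$-minor model. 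One would build these branch sets greedily --- repeatedly pick an uncovered neighbour of $v$, absorb a small dense patch of $G[B]$ around it to link it to the patches already chosen, and use the excess density of $G[B]$ together with the fact that every colour class of a $Ct$-colouring of $G-v$ meets $N(v)$ (so the available neighbours are well spread) to keep the patches small, connected and mutually adjacent throughout the process. An alternative packaging of the dense case, which may be easier to iterate, is to pass instead to a densest small subgraph of $G$, invoke minimality to colour everything outside it, and reduce to the extremal problem of colouring $K_t$-minor-free graphs of bounded excess density.

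\medskip
The step I expect to be the real obstacle is precisely the one where the present paper, and before it Kostochka and Thomason, halt. Making the branch-set construction run needs the ball $B$ to exceed the average density by only a \emph{bounded} factor --- not $\sqrt{\log t}$, not even $(\log t)^{1/4}$ --- whereas the extremal constructions (pseudorandom graphs of density $\Theta(t\sqrt{\log t})$ with no $K_t$ minor) show that global density never suffices: one must locate a region that is denser than the graph's own typical local structure, i.e.\ exploit that a $Ct$-chromatic $K_t$-minor-free graph cannot everywhere resemble a bounded-density expander. Quantifying how much local excess density such a graph is forced to contain appears to be essentially equivalent to \cref{c:LinHadwiger} itself, so this route reorganises the difficulty rather than dissolving it. A realistic intermediate target is to drive $\beta \to 0^+$ in the theorem of this paper --- an $O(t\,(\log t)^{o(1)})$ bound --- by iterating the local-density dichotomy with a self-improving choice of radius; closing the remaining $(\log t)^{o(1)}$ gap down to a constant would, I believe, require a genuinely new ingredient, plausibly a supersaturation theorem asserting that every $K_t$-minor-free graph of chromatic number at least $Ct$ contains a bounded-radius ball with $\Omega(t)$ average degree above its baseline, proved by an entropy or counting argument rather than by minor-contraction bookkeeping.
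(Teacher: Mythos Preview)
\cref{c:LinHadwiger} is an open conjecture; the paper states it as such and does not attempt a proof, so there is no argument of the paper's to compare against. You are candid that your route ``reorganises the difficulty rather than dissolving it,'' and that assessment is correct --- what you have written is a research programme, not a proof.

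To name the concrete gaps: both branches of your local dichotomy are, as stated, unworkable. In the sparse-ball case you invoke ``a Kempe-chain / boundary-recolouring argument'' to free a colour at $v$, but $O(t)$-degeneracy of $G[B]$ does not by itself allow one to recolour $B$ relative to a fixed $Ct$-colouring of $G-v$ when the boundary of $B$ may already carry all $Ct$ colours; no such recolouring lemma is known, and criticality gives no control over how colours are distributed on $\partial B$. In the dense-ball case, average degree $\omega(t)$ is still well below the Kostochka--Thomason threshold $\Theta(t\sqrt{\log t})$, and the pseudorandom extremal graphs you yourself cite show that density well above $Ct$ need not yield the rooted $K_{t-1}$ minor in $G[N(v)]$ that your construction requires; the greedy step ``absorb a small dense patch \ldots\ to link it to the patches already chosen'' presupposes exactly the pairwise-adjacency structure whose existence is the point at issue. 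The paper's actual progress toward \cref{c:LinHadwiger} is \cref{t:main}, obtained not via any local argument around a single vertex but through the global density-increment machinery of \cref{t:newforced} combined with the linking result \cref{t:minorfrompieces}; the authors' own suggestion for further progress (\cref{q:1}) is to strengthen the latter, not to pursue a local dichotomy of the kind you sketch.
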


In this paper we take a step towards \cref{c:LinHadwiger} by  improving for large $t$ the upper bound on the number of colors needed to color graphs with no $K_{t}$ minor. Prior to our work, the best bound was $O(t\sqrt{\log{t}})$, which was obtained independently by Kostochka~\cite{Kostochka82,Kostochka84} and Thomason~\cite{Thomason84} in the 1980s. The only improvement~\cite{Thomason01,Wood13,KelPos19} since then has been in the constant factor. 

The results of \cite{Kostochka82,Kostochka84,Thomason84} bound the ``degeneracy" of graphs with no $K_t$ minor.
Recall that a graph $G$ is \emph{$d$-degenerate} if every non-null subgraph of $G$ contains a vertex of degree at most $d$. A standard inductive argument shows that every $d$-degenerate graph is $(d+1)$-colorable. Thus the following bound on the 
degeneracy of graphs with no $K_t$ minor gives a corresponding bound on their chromatic number. 

\begin{thm}[\cite{Kostochka82,Kostochka84,Thomason84}]\label{t:KT} Every graph with no $K_t$ minor is $O(t\sqrt{\log{t}})$-degenerate.
\end{thm}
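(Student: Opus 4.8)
The plan is to show that every graph $G$ with $n$ vertices and at least $ct\sqrt{\log t}\cdot n$ edges (for a suitable absolute constant $c$) contains a $K_t$ minor; since this bound on the edge count is inherited by every subgraph, it immediately yields that a $K_t$-minor-free graph is $O(t\sqrt{\log t})$-degenerate. So fix such a $G$ and set $d := ct\sqrt{\log t}$, so that $G$ has minimum degree at least $d$ after repeatedly deleting vertices of degree less than $d$ (if this process empties the graph we are done by the degeneracy conclusion directly; otherwise we are left with a nonempty subgraph of minimum degree $\ge d$, which we rename $G$). We want to find $t$ pairwise-adjacent, pairwise-disjoint connected subgraphs (branch sets).

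The key tool is a density-based argument producing a small, highly connected ``dense spot.'' First I would pass to a subgraph $H \subseteq G$ whose average degree is still $\Omega(t\sqrt{\log t})$ but which is additionally \emph{dense}, in the sense that $|V(H)| = O(t\sqrt{\log t})$; this is achieved by the standard trick of taking $H$ to be a subgraph minimizing $|V(H)|$ subject to $|E(H)| \ge \lambda |V(H)|$ for the appropriate threshold $\lambda$ — minimality forces every vertex of $H$ to have degree $\ge \lambda$, and an averaging/extremal-number argument (or a direct counting argument comparing the number of edges to $\binom{|V(H)|}{2}$) bounds $|V(H)|$ from above once $\lambda$ is large. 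Now pick a maximal collection of disjoint connected branch sets in $H$; if we had at least $t$ of them pairwise adjacent we would be done, so assume not. The crucial dichotomy is: either the branch sets can be chosen of bounded size (say each of size $O(\log t)$) and there are many of them, or else $H$ has a very dense small piece. In the first case, contract each branch set to a vertex; the resulting minor $H'$ has $\Omega(t\sqrt{\log t} / \log t) = \Omega(t/\sqrt{\log t})$ vertices but, by a second-moment / edge-counting estimate, retains average degree comparable to its order, and one shows a graph on $N$ vertices with more than roughly $N^2/2 - (\text{slack})$ edges contains $K_t$ for $N = \Theta(t/\sqrt{\log t})$ — this is where the $\sqrt{\log t}$ is exactly consumed: a near-complete graph on $\Theta(t/\sqrt{\log t})$ vertices already has a $K_t$ minor by greedily merging vertices, since $K_N$ with $N \ge t/\sqrt{\log t}$ and appropriately many extra edges can be contracted to $K_t$.

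More precisely, the cleanest route (essentially Thomason's) is: greedily build the branch sets one at a time, each time growing a BFS-tree from a vertex outward until its neighborhood in the ``remaining'' graph is large enough to guarantee it will be adjacent to all future branch sets, and charge the vertices used to a potential function. Alternatively (Kostochka's route) one uses the bound on extremal numbers forcing $K_t$ minors in terms of average degree via an inductive contraction: if some vertex $v$ has a neighborhood inducing a subgraph of average degree $\ge d - (\text{something})$, recurse inside $N(v)$; otherwise every neighborhood is sparse, so one can find many disjoint edges in neighborhoods and contract them to boost the average degree relative to the order. Iterating $\Theta(\log t)$ times multiplies the effective density by a constant factor each step while only dividing the order by a constant, and after $\Theta(\log t)$ steps one reaches a graph of order $\Theta(t)$ that is (nearly) complete, hence has a $K_t$ minor.

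The main obstacle is getting the constant and the exponent right in the iteration: one must track simultaneously the order $n_i$, the average degree $d_i$, and the ``density deficiency'' $\binom{n_i}{2} - |E_i|$ through each contraction step, and verify that after $\Theta(\log t)$ rounds the deficiency is small enough (sub-quadratic, in fact $o(n_i^2)$ with room to spare) on a vertex set of size $\ge t$, so that a near-complete graph argument finishes. Making each contraction step lose only a constant factor in the order while gaining a constant factor in density — and ensuring the two multiplicative effects combine to exactly $\sqrt{\log t}$ after $\log t$ rounds — is the delicate quantitative heart of the proof; everything else is the by-now-standard extremal setup.
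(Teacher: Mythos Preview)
The paper does not give a proof of this theorem at all; it is quoted as a classical result of Kostochka and Thomason and used as a black box (an explicit form is later restated as Theorem~\ref{t:density}). So there is no ``paper's own proof'' to compare against, and what you have written is an attempt to reconstruct the original literature arguments rather than anything in this paper.

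As a sketch of the Kostochka/Thomason proofs your outline is in the right spirit---pass to a small dense subgraph, then iteratively contract to boost density until the graph is near-complete---but the numbers in your middle paragraph are wrong in a way that matters. You write that contracting branch sets yields a minor on $\Theta(t/\sqrt{\log t})$ vertices and that ``a near-complete graph on $\Theta(t/\sqrt{\log t})$ vertices already has a $K_t$ minor.'' This cannot be: a $K_t$ minor requires at least $t$ branch sets and hence at least $t$ vertices, so no graph on fewer than $t$ vertices contains one. The correct picture is the reverse: the dense spot has $\Theta(t\sqrt{\log t})$ vertices, one partitions it into $t$ branch sets each of size $\Theta(\sqrt{\log t})$, and the density of the spot (average degree comparable to its order) is exactly what guarantees that any two such branch sets are adjacent. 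Equivalently, in the iterative-contraction picture the endpoint is a near-complete graph on $\Theta(t)$ vertices, not $\Theta(t/\sqrt{\log t})$. Your final paragraph gets this right (``a vertex set of size $\ge t$''), so the error is an internal inconsistency rather than a fundamental misunderstanding, but as written the argument in the middle paragraph does not go through.

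Separately, the claim that minimizing $|V(H)|$ subject to $|E(H)| \ge \lambda|V(H)|$ bounds $|V(H)|$ from above is not correct without further work: that minimality only gives $\delta(H) \ge \lambda$, and large sparse-ish graphs with minimum degree $\lambda$ certainly exist. Getting $|V(H)| = O(\lambda)$ requires a genuine additional step (in Thomason's version, a random-contraction or neighbourhood argument; in Kostochka's, the iterated-contraction itself).
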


Kostochka~\cite{Kostochka82,Kostochka84} and de la Vega~\cite{Vega83} have shown that there exist graphs with no $K_t$ minor and minimum degree $\Omega(t\sqrt{\log{t}})$. Thus the bound in \cref{t:KT} is tight, and it is natural to consider the possibility that coloring graphs with no $K_t$ minor requires $\Omega(t\sqrt{\log{t}})$ colors. In fact, Reed and Seymour~\cite{ReeSey98} refer to this assertion as ``a commonly expressed counter-conjecture" to Conjecture~\ref{Hadwiger}. 

We disprove the above ``counter-conjecture" by proving the following main result.

\begin{thm}\label{t:main} For every $\beta > \frac 1 4$,
every graph with no $K_t$ minor is $O(t (\log t)^{\beta})$-colorable.
\end{thm}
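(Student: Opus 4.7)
My plan is to proceed by strong induction on $|V(G)|$, taking a minimum counterexample. Set $k := Ct(\log t)^\beta$ for a suitably large constant $C = C(\beta)$, and suppose $G$ is vertex-minimum with no $K_t$ minor and $\chi(G) > k$. By minimality, $G$ is $k$-critical, so $\delta(G) \geq k$. The goal is then to derive a contradiction, either by constructing a $K_t$ minor in $G$ or by recoloring via the inductive hypothesis.

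The overall strategy I would pursue is a structural dichotomy: either $G$ admits a small balanced separator (so induction on $|V(G)|$ colors each side with at most $k-1$ colors and then glues along the separator), or $G$ contains a ``small dense minor'' whose density exceeds the Kostochka-Thomason threshold, forcing a $K_t$ minor by \cref{t:KT} and yielding a contradiction. The intuitive justification is that the known extremal Kostochka-Thomason graphs are locally random-like and regular, so any $G$ whose minimum degree exceeds the \cref{t:KT} bound by a factor of $(\log t)^{\beta-1/2}$ should have enough ``extra'' density to be exploitable.

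To execute the dense-minor branch, I would attempt the following ball-growing scheme: starting from a vertex $v$, iteratively expand a BFS ball $B_r(v)$ and track the edge-to-vertex ratio inside the ball. As long as this ratio keeps growing, the ball eventually becomes dense enough (at some radius $r^\star$) to exceed Kostochka-Thomason on $|B_{r^\star}(v)|$ vertices, contradicting the hypothesis that $G$ has no $K_t$ minor; if instead the ratio stalls, the boundary of the ball provides a small separator and we invoke the inductive hypothesis. One must also ensure that the ``$K_t$ minor'' we extract can actually be realized in $G$ rather than only as a shallow minor of $G[B_r(v)]$, but since $G$ itself contains $B_r(v)$ as a subgraph this is not an obstacle.

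The main obstacle will be matching the exponent $\beta > 1/4$ rather than something weaker such as $\beta > 1/2 - o(1)$. The value $1/4$ strongly suggests a geometric-mean optimization between two parameters that must be balanced -- plausibly the radius of the BFS ball against the density threshold we aim to exceed, or the size of the dense minor against the slack we allow in the Kostochka-Thomason bound applied to it. Each of these individually appears to save at most $(\log t)^{1/2}$ over \cref{t:KT}, but they combine multiplicatively through a recursion whose fixed point yields $(\log t)^{1/4}$ savings. Pinning down a recursion scheme in which the inductive hypothesis can be reapplied at every scale -- while keeping both parameters simultaneously near their optima -- is where the delicate combinatorics will concentrate, and will likely require replacing crude average-degree arguments by a sharper density notion (for instance, a moment of the degree sequence or a weighted density) that behaves well under contraction and deletion.
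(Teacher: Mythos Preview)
Your proposal is not a proof but a sketch with two substantive gaps.

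\textbf{The separator branch does not close.} If $S$ is a small separator and you color $G_1 = G[A\cup S]$ and $G_2 = G[B\cup S]$ with $k$ colors each, there is no way in general to ``glue along the separator'' unless $S$ is a clique (or you spend $|S|$ extra colors, which is far too many here). The paper does not argue this way at all: instead of vertex-minimality it takes a \emph{contraction-critical} counterexample and invokes Kawarabayashi's theorem (\cref{t:minconnectivity}) to get $\kappa(G)\ge 2k/27$ directly. High connectivity is then used, not as one side of a dichotomy, but as a standing hypothesis that drives the entire construction.

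\textbf{One dense minor is not enough; the missing idea is ``many pieces plus linkage''.} A BFS ball-growing argument, even if it yielded a minor of density exceeding $3.2\,t\sqrt{\log t}$, would prove that the degeneracy bound already forces a $K_t$ minor --- but the Kostochka--Thomason bound is tight, so no such argument can succeed from $\delta(G)\ge t(\log t)^{\beta}$ with $\beta<\tfrac12$. The paper's route is different: its density-increment result (\cref{t:newforced}) says that a graph of density $d$ either has a minor of density $D$ \emph{or} contains a small subgraph of density $\approx d$. Repeatedly extracting such subgraphs produces pairwise disjoint $H_1,\dots,H_r$ each with $\d(H_i)\ge C t(\log t)^{1/4}$ and $\v(H_i)\le t(\log t)^{3/4}$. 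If $r\ge\tfrac12\sqrt{\log t}$, the linkage theorem (\cref{t:minorfrompieces}) uses the ambient connectivity of $G$ to weave rooted $K_s$ models in the $H_i$ (with $s\approx t/(\log t)^{1/4}$) into a single $K_t$ model; if $r$ is smaller, the union $X=\bigcup V(H_i)$ is tiny and \cref{c:big} colors $G[X]$ cheaply, forcing $G\setminus X$ to remain dense and contradicting maximality of the collection. The exponent $\tfrac14$ is \emph{not} a geometric mean from a two-parameter recursion: it is exactly the density threshold $t(\log t)^{1/4}\approx s\sqrt{\log s}$ needed in each $H_i$ to host a rooted $K_s$ model via \cref{t:density}, and simultaneously to have $r\approx\sqrt{\log t}\ge\binom{(\log t)^{1/4}}{2}$ pieces to link. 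Your sketch contains neither the iterated extraction of many pieces nor the linkage step, and without them there is no path to $\beta>\tfrac14$.
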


The proof of \cref{t:main} occupies most of the paper. In Section~\ref{s:outline}, we outline the proof, and derive  \cref{t:main} from our two main technical results:
\begin{itemize}
	\item \cref{t:newforced}, which shows that any sufficiently dense graph with no $K_t$ minor contains a relatively small subgraph, which is still dense, and
	\item \cref{t:minorfrompieces}, which  adapts an argument of Thomason~\cite{Thomason01} to show that every sufficiently well-connected graph containing a large number of vertex-disjoint dense subgraphs has a $K_t$ minor.
\end{itemize}	
 We prove \cref{t:minorfrompieces} in Section~\ref{s:build}. We prove \cref{t:newforced} in Section~\ref{s:force} using a density increment argument. In \cref{s:turan} we give an application of \cref{t:newforced} beyond the proof of \cref{t:main}, extending results of K\"{u}hn and Osthus~\cite{KO03,KO05}, and Krivelevich and Sudakov~\cite{KriSud09} on the density of minors in graphs with a forbidden complete bipartite subgraph or an even cycle to general bipartite graphs. In Section~\ref{s:remarks} we conclude the paper with a few  remarks.
 
\subsubsection*{Notation}

We use largely standard graph-theoretical notation. We denote by $\v(G)$ and $\e(G)$ the number of  vertices and edges of a graph $G$, respectively, and denote by $\d(G)=\e(G)/\v(G)$ the \emph{density} of a non-null graph $G$. We use $\chi(G)$ to denote the chromatic number of $G$, and $\kappa(G)$ to denote the (vertex) connectivity of $G$. 
The degree of a vertex $v$ in a graph $G$ is denoted by $\deg_G(v)$ or simply by $\deg(v)$ if there is no danger of confusion.
We denote by $G[X]$ the subgraph of $G$ induced by a set $X \subseteq V(G)$. 

For a positive integer $n$, let $[n]$ denote $\{1,2,\ldots,n\}$. The logarithms in the paper are natural unless specified otherwise.

We say that vertex-disjoint subgraphs $H$ and $H'$ of a graph $G$ are \emph{adjacent} if there exists an edge of $G$ with one end in $V(H)$ and the other in $V(H')$, and $H$ and $H'$ are \emph{non-adjacent}, otherwise.

A collection $\mc{X} = \{X_1,X_2,\ldots,X_h\}$ of pairwise disjoint subsets of $V(G)$ is a \emph{model of a graph $H$ in a graph $G$} if $G[X_i]$ is connected for every $i \in [h]$, and there exists a bijection $\phi: V(H) \to [h]$ such that $G[X_{\phi(u)}]$ and $G[X_{\phi(v)}]$ are adjacent for every $uv \in E(H)$. It is well-known and not hard to see that $G$ has an $H$ minor if and only if there exists a model of $H$ in $G$. We say that a model $\mc{X}$ as above is \emph{rooted at $S$} for $S \subseteq V(G)$ if $|S|=h$  and  $|X_i \cap S|=1$ for every $i \in [h]$.

\section{Outline of the proof}\label{s:outline}

\subsubsection*{Small graphs.}

The proof of \cref{t:main} uses different methods depending on the magnitude of $\v(G)$.
In the case when $\v(G)$ is small our argument is based on the following classical bound due to Duchet and Meyniel~\cite{DucMey82} on the independence number of graphs with no $K_t$ minor.

\begin{thm}[\cite{DucMey82}]\label{t:DucMey} For every $t \geq 2$,
	every graph $G$ with no $K_t$ minor has an independent set of size at least $\frac{\v(G)}{2(t-1)}$. 
\end{thm}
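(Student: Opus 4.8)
I would derive this from a structural claim about clique-minor models together with the trivial observation that a connected bipartite graph on $m$ vertices has an independent set of size $\lceil m/2\rceil$. First, since the independence number is additive over connected components and each component of a $K_t$-minor-free graph is again $K_t$-minor-free, it suffices to treat connected $G$. For connected $G$ with no $K_t$ minor, I claim one can partition $V(G)$ into $k\le t-1$ connected, pairwise adjacent subgraphs $B_1,\dots,B_k$ such that each $G[B_i]$ is bipartite. Granting this, let $B_i$ be a largest part, so $\v(B_i)\ge \v(G)/k\ge \v(G)/(t-1)$; a proper $2$-colouring of the connected bipartite graph $G[B_i]$ has a colour class of size at least $\v(B_i)/2$, which is an independent set of $G$, giving $\alpha(G)\ge \v(G)/(2(t-1))$.

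It remains to prove the claim. A partition into $\le t-1$ connected, pairwise adjacent parts certainly exists: take a model of a largest clique minor $K_h$ in $G$ — note $h\le t-1$ since there is no $K_t$ minor — and, as long as some vertex $u$ is not yet covered, add $u$ to a branch set it is adjacent to; this keeps the branch sets connected and pairwise adjacent and terminates with a cover because $G$ is connected, and since pairwise adjacent connected branch sets yield a $K_k$ minor, the number $k$ of parts is at most $h\le t-1$. The work is to upgrade such a model to one with \emph{bipartite} branch sets, and I would attempt this by an extremal argument: among all covering models with at most $t-1$ connected pairwise adjacent branch sets, choose one minimizing $\sum_i \tau(G[B_i])$, where $\tau(H)$ denotes the least number of vertices whose deletion makes $H$ bipartite. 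If some $G[B_i]$ is not bipartite it contains a chordless odd cycle $C$; I would then move a carefully chosen vertex of $C$ into another branch set, or split it off as a new branch set (still keeping at most $t-1$ parts), so as to strictly decrease $\sum_i \tau(G[B_i])$ while preserving connectedness and pairwise adjacency of all parts, contradicting the choice and forcing every branch set to be bipartite.

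The rerouting step is where I expect the main difficulty to lie. One must select the vertex of the odd cycle and its target branch set so that no branch set gets disconnected and no two branch sets lose their only adjacency, and one must dispose of degenerate configurations — a branch set that is itself an odd cycle, or a vertex of $C$ all of whose edges leaving $B_i$ go to a single other branch set. This is, I believe, the technical core of Duchet and Meyniel's argument; if the potential $\sum_i\tau(G[B_i])$ is not the cleanest handle, a variant — for instance maximizing the number of singleton branch sets, or minimizing $\sum_i \e(G[B_i])$ subject to covering and pairwise adjacency — should play the same role.
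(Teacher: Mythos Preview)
The paper does not give a proof of \cref{t:DucMey}; it is quoted from~\cite{DucMey82}. Nonetheless, your proposal deserves comment because it rests on a claim that is far stronger than the theorem itself.

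Your central structural claim is that every connected $K_t$-minor-free graph admits a partition into at most $t-1$ connected, pairwise adjacent parts, each inducing a \emph{bipartite} subgraph. Observe what this would yield: two-colouring each part with its own pair of colours gives a proper colouring of $G$, so $\chi(G)\le 2(t-1)$ for every $K_t$-minor-free $G$. That is \cref{c:LinHadwiger} with $C=2$, and it is wide open --- the entire point of the present paper is to push the best known bound from $O(t\sqrt{\log t})$ down to $O(t(\log t)^{\beta})$ for $\beta>1/4$, still far from linear. The ``rerouting'' step you flag as the technical core is therefore not a missing detail but an open problem at least as hard as \cref{t:main}; no potential-function argument of the kind you sketch will close it, and this is not what Duchet and Meyniel do.

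The actual argument in~\cite{DucMey82} avoids any bipartiteness claim. One first shows that every connected graph $G$ has a connected dominating set $D$ with $|D|\le 2\alpha(G)-1$, where $\alpha(G)$ is the independence number: start from a single vertex, and while some vertex is undominated, adjoin a path of length two from the current set to an undominated vertex; the undominated endpoints collected along the way, together with the starting vertex, form an independent set of size $(|D|+1)/2$. Since $D$ is connected and dominating, adding it as an extra branch set to any clique-minor model in $G-D$ shows that $G-D$ has no $K_{t-1}$ minor; applying induction on $t$ to the components of $G-D$ then gives $\v(G)\le |D|+2(t-2)\alpha(G)\le 2(t-1)\alpha(G)-1$. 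The key difference from your plan is that only \emph{one} good piece is peeled off per step, trading a unit of clique-minor size for at most $2\alpha(G)-1$ vertices, rather than attempting to make every branch set of a full covering model bipartite simultaneously.
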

 
\cref{t:DucMey} implies that every graph with no $K_t$ minor contains a $t$-colorable subgraph on a constant proportion of vertices. Woodall~\cite{Woodall87} proved the  following stronger result, which as observed by Seymour~\cite{Sey16Survey} also follows from the proof of \cref{t:DucMey} in \cite{DucMey82}.
 
\begin{thm}[\cite{Woodall87}]\label{t:big}
 	Let $G$ be a graph with no $K_t$ minor. Then there exists $X \subseteq V(G)$ with $|X| \geq \frac{\v(G)}{2}$ such that $\chi(G[X]) \leq t-1$. 
\end{thm}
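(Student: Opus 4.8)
The plan is to deduce \cref{t:big} from the \emph{proof} of \cref{t:DucMey}, following the observation attributed above to Seymour. I would first record the reduction: it suffices to produce pairwise disjoint independent sets $I_1,\dots,I_{t-1}$ of $G$ with $\sum_{j=1}^{t-1}|I_j|\ge\tfrac12\v(G)$, since colouring every vertex of $I_j$ with colour $j$ is a proper colouring of $G[I_1\cup\dots\cup I_{t-1}]$ with at most $t-1$ colours, and $X:=I_1\cup\dots\cup I_{t-1}$ has the required size. Next I would dispose of one easy case by induction on $\v(G)$: if $G$ has a vertex $v$ with $\deg_G(v)\le t-2$, then $G-v$ still has no $K_t$ minor, so by induction it has a set $X'$ with $|X'|\ge\tfrac12(\v(G)-1)$ and $\chi(G[X'])\le t-1$; since $v$ sees at most $t-2$ colours in any proper $(t-1)$-colouring of $G[X']$, that colouring extends to $X:=X'\cup\{v\}$, which works. (One may also assume $G$ connected, splitting off components.) So the substantive case is $\delta(G)\ge t-1$.

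For that case I would reconstruct the argument of Duchet and Meyniel from \cite{DucMey82}, tracking the colouring it implicitly yields. Its skeleton is a vertex partition of $G$ into connected parts $V_1,\dots,V_r$ with complete quotient — obtained by taking such a partition with the fewest parts and noting that two non-adjacent parts could be merged along a shortest path between them in the quotient, contradicting minimality — so that, contracting the parts, $K_r$ is a minor of $G$ and hence $r\le t-1$. The content of the argument (and of Seymour's observation) is that, together with auxiliary spanning trees of the parts, their $2$-colourings, and a suitable extremal choice, the construction delivers not merely one independent set of size $\ge\tfrac1{2(t-1)}\v(G)$ but a family of at most $t-1$ pairwise disjoint independent sets of $G$ of total size at least $\tfrac12\v(G)$. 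The reduction above then finishes the proof; I would import this part essentially verbatim from \cite{DucMey82}, checking that it yields the stronger ``$\tfrac12\v(G)$'' conclusion.

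The step I expect to be the main obstacle is precisely verifying that ``$\tfrac12\v(G)$'' conclusion, and here one genuinely needs the internal structure of the Duchet--Meyniel proof rather than the statement of \cref{t:DucMey}: the naive route of iterating \cref{t:DucMey} — deleting a maximum independent set $t-1$ times — covers only a $1-(1-\tfrac1{2(t-1)})^{t-1}$ fraction of the vertices, which is bounded away from $\tfrac12$ for large $t$ (it tends to $1-e^{-1/2}<\tfrac12$), so iteration alone does not suffice. Moreover the obvious candidate for an independent set inside a part $V_i$, namely the larger colour class of a spanning tree of $G[V_i]$, is independent in the tree but in general not in $G[V_i]$ (take $G[V_i]=K_r$), so the proof must incorporate an exchange argument — on the choice of the partition and of the trees — that removes the offending chords while keeping $r\le t-1$. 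Reproducing that exchange argument, and confirming that it delivers the stronger bound, is where essentially all the difficulty lies; the final counting and colouring are routine.
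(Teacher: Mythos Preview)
The paper does not prove \cref{t:big}; it is quoted as a known result of Woodall~\cite{Woodall87}, with the remark that it also follows from the proof of \cref{t:DucMey} as observed by Seymour. There is therefore no proof in the paper to compare your proposal against.

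As for the proposal itself: the reduction to finding disjoint independent sets $I_1,\dots,I_{t-1}$ of total size at least $\tfrac12\v(G)$ is sound, and your observation that iterating the \emph{statement} of \cref{t:DucMey} falls short (the covered fraction tends to $1-e^{-1/2}<\tfrac12$) is correct and a useful sanity check. However, your proposal is explicitly incomplete on the decisive step: you identify that the Duchet--Meyniel argument must be opened up and an exchange argument carried out, but you do not actually carry it out, writing instead that you would ``import this part essentially verbatim'' and that ``reproducing that exchange argument \ldots\ is where essentially all the difficulty lies''. That is an accurate diagnosis of where the work is, but it is not a proof. Your sketch of the Duchet--Meyniel structure (a minimum connected partition with complete quotient) is also somewhat off: the standard argument does not produce such a partition directly but rather builds an independent set $S$ together with a model of $K_{|S|}$ via a greedy shortest-path growing procedure, and it is the bookkeeping in that procedure --- each new independent vertex ``costs'' at most two vertices outside $S$, assigned to a single branch set --- that yields, across the at most $t-1$ branch sets, independent sets summing to at least $\tfrac12\v(G)$. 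If you want a self-contained proof, that is the mechanism you need to write out.
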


\cref{t:big} straightforwardly implies the following bound on the chromatic number of graphs with no $K_t$ minor.

\begin{cor}\label{c:big}
 	Let $G$ be a graph with no $K_t$ minor. Then $$\chi(G) \leq \left(\log_{2}\left(\frac{\v(G)}{t}\right)+2\right)t.$$
\end{cor}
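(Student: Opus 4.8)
The plan is to apply \cref{t:big} iteratively: at each step peel off a large $(t-1)$-colorable induced subgraph, using a fresh set of colors for each such subgraph, and stop once the graph that remains is small enough to be colored with $t$ colors outright.

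We may assume $\v(G) > t$, since otherwise $\chi(G) \le \v(G) \le t$ and the bound holds trivially. Put $k := \lceil \log_2(\v(G)/t) \rceil$ and define induced subgraphs $G_0 := G, G_1, \ldots, G_k$ together with vertex sets $X_0, \ldots, X_{k-1}$ recursively as follows: since $G_i$ is a subgraph of $G$, it has no $K_t$ minor, so \cref{t:big} supplies a set $X_i \subseteq V(G_i)$ with $|X_i| \ge \v(G_i)/2$ and $\chi(G_i[X_i]) \le t-1$; set $G_{i+1} := G_i - X_i$. Because $\v(G_{i+1}) = \v(G_i) - |X_i| \le \v(G_i)/2$, a trivial induction gives $\v(G_i) \le \v(G)/2^i$ for every $i$; in particular, since $2^k \ge \v(G)/t$, we get $\v(G_k) \le \v(G)/2^k \le t$.

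To assemble the coloring, observe that $X_0, \ldots, X_{k-1}, V(G_k)$ partition $V(G)$. Color each $G_i[X_i]$ properly with its own palette of $t-1$ colors, and color $G_k$ properly with a further palette of $\chi(G_k) \le \v(G_k) \le t$ colors, choosing all $k+1$ palettes to be pairwise disjoint. Any edge of $G$ with both ends in one part of the partition is properly colored by construction, and any edge joining two distinct parts is properly colored because the palettes are disjoint; hence we obtain a proper coloring of $G$ using at most $k(t-1) + t \le (k+1)t$ colors. Since $k \le \log_2(\v(G)/t) + 1$, this is at most $\left(\log_2\!\left(\v(G)/t\right) + 2\right)t$, as claimed.

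There is no genuine obstacle here — the argument is a routine halving iteration. The one point that deserves a moment's thought is that colors cannot be reused between successive peeling steps, since a vertex of $X_i$ may be adjacent in $G$ to a vertex of $X_j$, so a fresh palette must be spent in each round. This is precisely why the iteration is stopped at the threshold $\v(G_k) \le t$ rather than $\v(G_k) < 1$: stopping early saves $k$ rounds of $t-1$ colors each, which is what turns the naive bound of roughly $t\log_2 \v(G)$ into the stated $t\log_2(\v(G)/t)$.
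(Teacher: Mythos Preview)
Your proof is correct and follows the same approach as the paper: iterate \cref{t:big} roughly $\lceil \log_2(\v(G)/t)\rceil$ times, halving the remaining vertex set at each step and spending a fresh palette of at most $t$ colors per step, then color the leftover $\le t$ vertices directly. The only cosmetic differences are that you track $t-1$ rather than $t$ colors per round and spell out the base case $\v(G)\le t$ explicitly.
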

 
\begin{proof}
 	By \cref{t:big} for every integer $s \geq 0$ there exist disjoint subsets $X_1,X_2,$ $\ldots,X_s$  $\subseteq V(G)$ such that $|V(G) - \cup_{i=1}^{s}X_i| \leq \v(G)/2^s$ and $\chi(G[X_i]) \leq t$ for every $i\in [s]$.
 	Let $s=\lceil \log_{2}(\v(G)/t) \rceil$. Then  $\v(G)/2^s \leq t$, and so $$\chi(G\setminus\cup_{i=1}^{s}X_i) \leq \v(G\setminus\cup_{i=1}^{s}X_i) \leq t.$$ It follows that $\chi(G) \leq t + \sum_{i=1}^s\chi(G[X_i]) \leq (s+1)t$, implying the corollary.
\end{proof}

By \cref{c:big} we may assume that $\v(G)$ is large. Given a graph $G$ with $\chi(G) = \Omega(t (\log t)^{\beta})$, where $\beta$ is as in \cref{t:main}, we find a $K_t$ minor in $G$ by adapting the following strategy employed by Thomason~\cite{Thomason01}: We first construct a large collection of pairwise vertex-disjoint dense subgraphs $H_1,\ldots,H_r$ of $G$,  and then find a model of a smaller complete graph in each $H_i$, and link these models together to build a model of $K_t$.

\subsubsection*{Density increment.}

The  next theorem is the central element of our proof. We precede its statement with a brief motivation. 

By \cref{t:KT} there exists $D=O(t\sqrt{\log t})$ such that every graph $G$ with  density $\d(G) \geq D$ has a $K_t$ minor. For a graph $G$ with smaller density one might still hope to guarantee a $K_t$ minor by finding a minor $H$ of $G$ with $\d(H) \geq D$. 
Thus we are interested for given $d,D$  in properties of  graphs $G$ of density $\d(G)=d$ and no minor of density $D$, which are the \emph{obstructions} to this approach.

It is possible that such a graph $G$ simply does not have enough edges. As every graph of density $D$ has  at least $D^2$ edges, if $G$ has a minor of density $D$ we must have $ D^2 \leq \e(G) = d\cdot \v(G)$. It follows that all the graphs $G$ with $\v(G) < D^2/d$ are among the obstructions. One can obtain further obstructions by taking disjoint union of such graphs, and, more generally, by gluing smaller obstructions along small sets in a ``tree-like fashion''. Note that all graphs obtained in this way contain a subgraph with at most $D^2/d$ vertices and density close to $d$.

Our result shows that all the obstructions have a similar property, i.e. they contain a subgraph of density $d/K$ on at most $KD^2/d$ vertices, where the error factor $K$ is subpolynomial in the density gap $D/d$.

\begin{thm}\label{t:newforced} For every $\delta > 0$ there exists $C=C_{\ref{t:newforced}}(\delta) > 0$ such that for every $D > 0$ the following holds. Let $G$ be a graph with $\d(G) \ge C$, and let $s=D/\d(G)$.  Then $G$ contains at least one of the following: \begin{description}
		\item[(i)] a minor $J$ with $\d(J) \geq D$, or
		\item[(ii)] a subgraph $H$ with $\v(H) \leq Cs^{\delta} D^2/\d(G)$ and $\d(H) \geq s^{-\delta}\d(G)/C$.
	\end{description}  
\end{thm}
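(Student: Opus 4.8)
I would prove the contrapositive, ``if \textbf{(ii)} fails then \textbf{(i)} holds''. Write $d=\d(G)$ and $s=D/d$, and set $m=Cs^{\delta}D^{2}/d$ and $d'=s^{-\delta}d/C$ (the thresholds in \textbf{(ii)}); note that $d'm=D^{2}$ and $d/d'=Cs^{\delta}$. First I would clear the boundary cases. If $s\le 1$ then $D\le \d(G)$, so $G$ is a minor of itself of density at least $D$ and \textbf{(i)} holds; assume $s>1$. If $\v(G)\le m$ then $G$ itself is a subgraph of order at most $m$ and density $d\ge d'$, so \textbf{(ii)} holds; assume $\v(G)>m$. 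Finally, pass to a subgraph of minimum degree at least $d$ (one exists since $\d(G)=d$); it still has more than $m$ vertices, as a subgraph on at most $m$ vertices of density $d>d'$ would give \textbf{(ii)}. So we may assume $G$ has minimum degree at least $d$, has more than $m$ vertices, and --- the hypothesis from which \textbf{(i)} must now be derived --- has no subgraph on at most $m$ vertices of density at least $d'$.

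This last hypothesis says $G$ is \emph{locally sparse at scale $m$}: a $K_{r,r}$ has order $2r$ and density $r/2$, and since $d'\le m/4$ (equivalently $C^{2}s^{2+2\delta}\ge 4$, which holds as $C\ge 2$), $G$ contains no $K_{r,r}$ with $r=\lceil 2d'\rceil$. Locally sparse graphs are known to have much denser minors than \cref{t:KT} guarantees in general --- this is the phenomenon behind the K\"uhn--Osthus and Krivelevich--Sudakov results revisited in \cref{s:turan} --- morally, a locally sparse graph with $e$ edges has a minor of density of order $\sqrt{e}$, up to a loss factor that (one must arrange) degrades only with the local density bound rather than with $e$. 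Since $G$ has minimum degree at least $d$ on more than $m$ vertices, $\e(G)>\tfrac12 dm=\tfrac12 Cs^{\delta}D^{2}$, so $\sqrt{\e(G)}>\sqrt{Cs^{\delta}/2}\,D$; taking $C=C(\delta)$ large enough that $\sqrt{Cs^{\delta}}$ dominates the loss for every $s>1$ (using that any fixed positive power of $s$ eventually beats a polylogarithm of $s$, and that the loss is bounded for bounded $s$) would push the density of the resulting minor above $D$, giving \textbf{(i)}.

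The work is in making that last step precise, and this is where I would run a density increment. (The hypothesis $\d(G)\ge C$ ensures the minimum degree, hence all the parameters tracked below, are large.) Starting from $G$, build a sequence of minors $G=G_{0},G_{1},G_{2},\dots$ of strictly increasing density, maintaining that $G_{i}$ has minimum degree at least $\d(G_{i})$ and is obtained from $G$ by contracting a family of connected subgraphs of bounded order, the bound chosen so that the edges destroyed in a round of contractions --- controlled through local sparsity --- are dominated by those retained. Each round either already produces a minor of density at least $D$, in which case we stop and output \textbf{(i)}, or boosts the density by a fixed multiplicative factor; since the density stays below $D$ until we stop, the process halts after $O(\log s)$ rounds, and keeping account of the multiplicative loss incurred per round shows that the whole argument survives with an error factor sub-polynomial in $s$, as \cref{t:newforced} demands.

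The main obstacle is precisely this increment: showing that a locally sparse graph of density $\rho$ has a minor of density at least a fixed multiple of $\rho$. Naively partitioning the vertex set into small connected parts and contracting each does \emph{not} work --- that operation always lowers the density, as complete graphs show --- so one must genuinely exploit the absence of dense small subgraphs, which forces a ``spread-out'' part of the graph whose contraction strictly increases the density. Arranging that the per-step loss is sub-polynomial in $s$ rather than (as a black-box use of \cref{t:KT}-type bounds would give) polylogarithmic in $\rho$ itself is the delicate point that the choice of the thresholds $m$ and $d'$ in \cref{t:newforced} is designed to accommodate.
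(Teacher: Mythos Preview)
Your outline correctly identifies the architecture --- reduce to a graph with no small dense subgraph and then run a density increment --- and the boundary cases are handled cleanly. This matches the paper's approach at the top level: the paper also iterates a one-step increment (packaged as \cref{DenseSubgraph2}) via a minimal-counterexample argument on bounded minors, and pulls the small dense subgraph back through the contraction.

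The genuine gap is the increment itself, which you explicitly flag as ``the main obstacle'' but do not resolve. Your heuristic is that local sparsity (no $K_{r,r}$ with $r=\lceil 2d'\rceil$) should force a denser minor, citing the K\"uhn--Osthus / Krivelevich--Sudakov phenomenon. This does not close: in your setup $r$ is of order $s^{-\delta}d/C$, so it \emph{grows} with $d$, and the exponent gain $1+\tfrac{1}{2(r-1)}$ in \cref{t:KO} (or the constant $c_r$ in the Krivelevich--Sudakov version) degenerates accordingly. More importantly, the paper's \cref{t:bip}, which packages exactly the ``locally sparse $\Rightarrow$ dense minor'' statement you want, is \emph{derived from} \cref{t:newforced} in \cref{s:turan}; invoking that direction here would be circular.

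What the paper supplies in place of your missing step is \cref{DenseSubgraph2}: for suitable $k\ge\ell\ge 2$ and small $\varepsilon$, any graph of density $d$ either has a small dense subgraph, or an $(\ell+1)$-bounded minor of density roughly $\tfrac{\ell}{2}d$, or a $k$-bounded minor of density roughly $\tfrac{k}{8\ell}d$. The parameters are then tuned so that both density gains are at least $r^{1/(1+\delta)}$ for the relevant boundedness $r$, which is exactly what makes the pull-back arithmetic produce the $s^{\pm\delta}$ factors in (ii). The proof of \cref{DenseSubgraph2} is the real content and uses ideas absent from your outline: the $(\varepsilon,d)$-mate notion (controlling common neighbourhoods rather than $K_{r,r}$-freeness), an alternating-path argument for unbalanced bipartite graphs (\cref{kclawDense}), and a centroid-based forest contraction for general graphs (\cref{DenseSubgraph3}). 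Without a concrete mechanism of this kind, your outline is a correct frame around an empty box.
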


\cref{t:newforced} has applications beyond the proof of \cref{t:main}, one of which is given in \cref{s:turan}. As for the proof of \cref{t:main}, \cref{t:newforced} allows us to extract subgraphs one by one to construct the collection $H_1,\ldots,H_r$ mentioned at the end of the last subsection as otherwise we can partition our graph into two subgraphs, a small subgraph and a sparse subgraph, both of which are colorable with few colors: the small subgraph by \cref{c:big} and the sparse subgraph by the standard degeneracy argument.

\cref{t:newforced} is derived from the following  ``density increment'' result. Its statement requires one additional definition. We say that a graph $H$ is a \emph{$k$-bounded minor} of a graph $G$ if there exists a model $\mc{X}$ of $H$ in $G$ such that $|X| \leq k$ for every $X \in \mc{X}$. That is, $H$ can be obtained from a subgraph of $G$ by contracting connected subgraphs on at most $k$ vertices.

\begin{restatable}{thm}{Dense}\label{DenseSubgraph2}
Let $k\ge \ell \ge 2$ be integers. Let  $\varepsilon \in \left(0,\frac{1}{6k}\right)$ and let $G$ be a graph with $d=\d(G) \ge 1/\varepsilon$. Then $G$ contains at least one of the following:
	\begin{description}
		\item[(i)] a subgraph $H$ with $\v(H)\le 3k^3d$ and $\e(H)\ge \varepsilon^2d^2/2$, or
		\item[(ii)] an $(\ell+1)$-bounded minor $G'$ with $\d(G') \ge \frac{\ell}{2} (1-6k\varepsilon) d$, or
		\item[(iii)] a $k$-bounded minor $G'$ with $\d(G') \ge \frac{k}{8\ell} (1-2k\varepsilon) d$.
	\end{description}  
\end{restatable}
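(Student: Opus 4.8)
I would first delete vertices of degree less than $d$, one at a time: this never decreases $\d(G)$, so the process terminates at a non-null graph, and any subgraph or minor of that graph witnessing one of (i)--(iii) witnesses it for the original $G$ with the same parameter $d$. So assume $\delta(G)\ge d$. Next, if there is a set $S\subseteq V(G)$ with $|S|\le 3k^3d$ and $\e(G[S])\ge\varepsilon^2d^2/2$, then (i) holds; so assume not. Taking $S=V(G)$ and using $\e(G)\ge d\,\v(G)\ge d(\delta(G)+1)>\varepsilon^2 d^2/2$, this assumption already forces $\v(G)>3k^3d$, so $G$ has many vertices relative to $d$.

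\textbf{Building the minor from a BFS tree.} The idea is to produce the minor of (ii) or (iii) by contracting bounded-size connected subgraphs drawn from a breadth-first search tree. Fix a vertex $v_0$, take a BFS tree $T$ rooted at $v_0$ with layers $L_0,L_1,\dots$, decompose $T$ into vertical paths (subpaths of root-to-leaf paths, which always exist), and cut each vertical path into consecutive pieces of $\ell+1$ vertices each, with at most one shorter leftover per path. Each piece $P$ is connected, has $|P|\le\ell+1$, and --- being a subpath of a BFS tree --- meets each of at most $\ell+1$ consecutive layers in exactly one vertex. The key consequence of the BFS property is that the only edges of $G$ with both ends in $P$ are its $|P|-1$ path edges (every edge of $G$ joins vertices in equal or consecutive layers). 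Hence, writing $\mathcal P$ for the set of pieces and $r=|\mathcal P|$, exactly $\v(G)-r$ edges of $G$ lie inside pieces, so at least $(d-1)\v(G)+r\ge(1-\varepsilon)d\,\v(G)$ edges of $G$ join distinct pieces (using $d\ge1/\varepsilon$). Contracting the pieces gives an $(\ell+1)$-bounded minor $G'$ with $\v(G')=r$ whose edges are the adjacent pairs of pieces, so $\e(G')\ge(d-1)\v(G)+r-\Xi\ge(1-\varepsilon)d\,\v(G)-\Xi$, where $\Xi=\sum_{\{P,P'\}}\bigl(\mu(P,P')-1\bigr)^{+}$ counts the parallel edges lost in the contraction and $\mu(P,P')$ is the number of edges of $G$ between $P$ and $P'$.

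\textbf{The main obstacle, and where (ii) and (iii) split.} To conclude $\d(G')=\e(G')/r\ge\frac{\ell}{2}(1-6k\varepsilon)d$ one needs two estimates, and these are the substance of the proof. First, $r$ must be close to its minimum possible value $\v(G)/(\ell+1)$, i.e. the vertical-path decomposition must not be too ``bushy''; this I would handle by choosing $v_0$ well and by discarding a controlled fraction of pieces (the slack being the gap between the target factor $\ell/2$ and $\ell+1$), keeping $r\le(1+O(k\varepsilon))\,\v(G)/(\ell+1)$. Second, and this is the crux, the loss $\Xi$ must be only an $O(k\varepsilon)$ fraction of $d\,\v(G)$. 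The ``no small dense subgraph'' assumption bounds a single $\mu(P,P')$ by $\varepsilon^2d^2/2$ (the union of two pieces is a set of $\le 2(\ell+1)$ vertices carrying $(|P|-1)+(|P'|-1)+\mu$ edges), but this per-pair bound is far too weak on its own; one must bound $\Xi$ globally, e.g. by charging high-multiplicity pairs to dense local configurations or by averaging over the choice of $v_0$ and of the decomposition. When this can be pushed through at scale $\ell+1$ we obtain (ii). When it cannot --- roughly, when parallel edges between $(\ell+1)$-pieces are unavoidably plentiful --- the remedy is to coarsen: merge runs of about $k/\ell$ consecutive pieces along $T$ into pieces of size at most $k$, which kills most of the offending parallel edges at the price of a weaker density target, and carrying the loss through the coarsening yields the $k$-bounded minor of (iii) with the factor $\frac{k}{8\ell}(1-2k\varepsilon)$ replacing $\frac{\ell}{2}(1-6k\varepsilon)$. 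I expect the simultaneous control of the three quantities --- $r$, the inside-piece loss $\v(G)-r$, and the parallel-edge loss $\Xi$ --- together with the extraction of the precise constants, to be the delicate part; the hypotheses $\varepsilon<\frac1{6k}$ and $d\ge\frac1\varepsilon$ are exactly what make all these error terms affordable.
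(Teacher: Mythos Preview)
Your approach is genuinely different from the paper's and, as written, has real gaps at precisely the two places you flag as delicate.

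\textbf{Controlling $r$.} You need $r\approx\v(G)/(\ell+1)$, but a vertical-path decomposition of a BFS tree produces one leftover piece per path, and the number of paths equals the number of leaves of $T$. In a $d$-regular expander (the canonical hard instance: no small dense subgraph, no minor much denser than $d\sqrt{\log d}$), every BFS tree has $\Theta(\v(G))$ leaves, so $r=\Theta(\v(G))$ regardless of the root, and $\d(G')=\e(G')/r=O(d)$ with no factor-$\ell$ gain. Discarding an $O(k\varepsilon)$ fraction of pieces cannot repair a $\Theta(1)$ fraction of short leftovers.

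\textbf{Controlling $\Xi$.} The BFS layer structure only gives $\mu(P,P')\le 3(\ell+1)$, which yields $\e(G')\ge(\text{cross edges})/(3(\ell+1))$ and hence $\d(G')\ge d/3$ at best---again no $\ell$-gain. Your coarsening remedy for (iii) merges consecutive pieces along the same vertical path, but high-multiplicity pairs come from pieces on \emph{different} vertical paths occupying overlapping layer ranges, and those are not merged; so coarsening does not kill the offending parallel edges.

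The paper's argument is organised around a different mechanism. It calls two vertices \emph{$(\varepsilon,d)$-mates} if they have $\ge\varepsilon d$ common neighbours, and observes that the ``no small dense subgraph'' hypothesis forces every vertex of degree $\le k^2 d$ (in any $k$-bounded minor) to have fewer than $\varepsilon d$ mates. This is what bounds the edge loss under contraction, replacing your $\Xi$-estimate. For (iii), the paper partitions $V(G)$ by degree (low $\le kd$ vs.\ high), grows a $k$-bounded forest on the low-degree part chosen to minimise a potential based on tree \emph{centroids}, and obtains either the $k$-bounded minor of (iii) or an unbalanced bipartite subgraph $H=(X,Y)$ with $|X|>\ell|Y|$ and every $x\in X$ having $\ge(1-2k\varepsilon)d$ neighbours in $Y$. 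For (ii), it then runs an \emph{alternating-path} argument on $H$ to build an $(\varepsilon,d)$-mate-free star forest whose contraction gives the $(\ell+1)$-bounded minor. Thus the (ii)/(iii) split is a structural dichotomy about the forest, not about whether $\Xi$ is large.
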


The proof of \cref{DenseSubgraph2} is the most challenging part of our argument and occupies Section~\ref{s:force}. Meanwhile, let us derive \cref{t:newforced} from \cref{DenseSubgraph2}.

\begin{proof}[Proof of \cref{t:newforced}]
	For $\delta > 0$, let  integers $k \ge \ell \ge 2$, $\eps > 0$ be chosen so that
	\begin{align}
	\left(1-6k\varepsilon\right)\frac{\ell}{2} &\geq (\ell+1)^{1/(\delta+1)}, \qquad \mathrm{and} \label{e:l}	\\	\left(1-2k\varepsilon \right)  \frac{k}{8\ell} &\geq k^{1/(\delta+1)}. \label{e:k} 
	\end{align}
	It is easy to see that such a choice is possible. We show that $C=C_{\ref{t:newforced}}(\delta)=6k^3/\eps^2$  satisfies the theorem.
	
\begin{claim}\label{c:dense3}
	Every graph $G$ with $\d(G) \ge C$ contains at least one of the following:
	\begin{enumerate}
		\item[(a)] a subgraph $H$ with $\v(H)\le Cd$ and $\d(H) \geq \d(G)/C$, or
		\item[(b)] an $r$-bounded minor $G'$ with $\d(G') \ge r^{1/(\delta+1)} \d(G)$ for some $r > 1$.
	\end{enumerate}
\end{claim}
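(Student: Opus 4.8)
The plan is to obtain Claim~\ref{c:dense3} as an immediate consequence of \cref{DenseSubgraph2} applied to $G$ with the parameters $k\ge\ell\ge 2$ and $\eps$ fixed above. Write $d=\d(G)$. First I would check the hypotheses of \cref{DenseSubgraph2}: by construction $k\ge\ell\ge 2$ are integers and $\eps\in(0,\tfrac1{6k})$, and since $k\ge 2$ forces $\eps<1$ we have $C=6k^3/\eps^2\ge 1/\eps$, so the assumption $\d(G)\ge C$ gives $d\ge 1/\eps$. Hence \cref{DenseSubgraph2} applies and produces one of its three outcomes, and it remains to check that each of them yields conclusion (a) or (b) of the claim.

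Suppose outcome (i) of \cref{DenseSubgraph2} holds: there is a subgraph $H$ with $\v(H)\le 3k^3d$ and $\e(H)\ge \eps^2 d^2/2$. Then $\v(H)\le 3k^3d\le (6k^3/\eps^2)d=Cd$ (using $\eps<1$), and
\[
\d(H)=\frac{\e(H)}{\v(H)}\ \ge\ \frac{\eps^2 d^2/2}{3k^3 d}\ =\ \frac{\eps^2}{6k^3}\,d\ =\ \frac{d}{C},
\]
so (a) holds. If outcome (ii) holds, there is an $(\ell+1)$-bounded minor $G'$ with $\d(G')\ge \tfrac\ell2(1-6k\eps)d$, which by~\eqref{e:l} is at least $(\ell+1)^{1/(\delta+1)}d$; since an $(\ell+1)$-bounded minor is in particular an $r$-bounded minor with $r=\ell+1>1$, conclusion (b) holds. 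Finally, if outcome (iii) holds, there is a $k$-bounded minor $G'$ with $\d(G')\ge \tfrac{k}{8\ell}(1-2k\eps)d$, which by~\eqref{e:k} is at least $k^{1/(\delta+1)}d$; as $k\ge\ell\ge 2>1$, conclusion (b) holds with $r=k$. This exhausts the cases and proves the claim.

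The only point that takes a moment — and the only place where an inequality is genuinely used — is the assertion (already flagged as ``easy to see'') that $k,\ell,\eps$ can be chosen to satisfy \eqref{e:l} and \eqref{e:k} simultaneously. This works precisely because $\delta>0$ makes the exponent $\tfrac1{\delta+1}$ strictly less than $1$: in \eqref{e:l} the left side is linear in $\ell$ while the right side is $(\ell+1)^{1/(\delta+1)}=o(\ell)$, and in \eqref{e:k} the left side is linear in $k$ while the right side is $k^{1/(\delta+1)}=o(k)$. So one picks $\ell=\ell(\delta)$ large enough that $\tfrac\ell4\ge(\ell+1)^{1/(\delta+1)}$, then $k=k(\delta,\ell)\ge\ell$ large enough that $\tfrac{k}{16\ell}\ge k^{1/(\delta+1)}$, and finally $\eps=\eps(\delta,k)$ small enough that $1-6k\eps\ge\tfrac12$ (this automatically gives $\eps<\tfrac1{6k}$ and $1-2k\eps\ge\tfrac12$ as well); then \eqref{e:l} and \eqref{e:k} both follow. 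In short, there is no real obstacle in the claim itself: all of the substance lies in \cref{DenseSubgraph2}, and the claim is just a repackaging of its three outcomes, with the parameter bookkeeping being the only step requiring any care.
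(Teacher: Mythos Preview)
Your proof is correct and follows exactly the paper's approach: apply \cref{DenseSubgraph2} and check that each of its three outcomes yields (a) or (b), using the choice of $C$ for outcome (i) and inequalities \eqref{e:l}, \eqref{e:k} for outcomes (ii), (iii). You have simply written out in full the calculations that the paper summarizes in one sentence, and your added paragraph justifying the existence of suitable $k,\ell,\eps$ expands on what the paper calls ``easy to see.''
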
	
\begin{proof} We apply \cref{DenseSubgraph2} to $G$. If \cref{DenseSubgraph2}(i) holds then (a) holds by the choice of $C$. If \cref{DenseSubgraph2}(ii) or (iii) holds then (b) holds by (\ref{e:l}) and (\ref{e:k}), respectively.	
\end{proof}	

Suppose now for a contradiction, that there exists a graph $G$ with $\d(G) \geq C$ that does not satisfy the conclusion of \cref{t:newforced}, while every proper minor $H$ with $\d(H) \geq C$ of $G$ satisfies it. Thus  $G$ has no minor $J$ with $\d(J) \geq D$. In particular $\d(G) < D$, and $s=D/\d(G) > 1$. If there exists a subgraph $H$ of $G$ as in \cref{c:dense3}(a), then  \cref{t:newforced}(ii) holds, contrary to the choice of $G$.

Thus by \cref{c:dense3}(b), $G$ has an $r$-bounded minor $G'$ with $\d(G') \ge r^{1/(\delta+1)} \d(G)$ for some $r > 1$. By the choice of $G$,  $G'$ has a subgraph $H'$ with $$\v(H') \leq (s')^{1+\delta} CD \qquad \mathrm{and} \qquad \d(H') \geq \frac{(s')^{-\delta}\d(G')}{C},$$
where $s' = D/\d(G') \leq sr^{-1/(\delta+1)}$. Then $H'$ is an $r$-bounded minor of $G$, corresponding to a subgraph $H$ of $G$ with $\v(H) \leq r\v(H')$ and $\d(H) \geq \d(H')/r$. Thus 
\begin{align*}\v(H) &\leq r(s')^{1+\delta} CD \leq  s^{1+\delta} CD, \qquad \mathrm{and}\\
	\d(H) &\geq \frac{(s')^{-\delta}\d(G')}{Cr} \geq  \left(sr^{-1/(\delta+1)}\right)^{-\delta}r^{1/(\delta+1)}r^{-1} \d(G)/C = s^{-\delta}\d(G)/C,
\end{align*}
and so \cref{t:newforced}(ii) holds, contradicting the choice of $G$.
\end{proof}	

\subsubsection*{Building a $K_t$ minor.}

Once appropriate $H_1,\ldots,H_r$ are found via the repeated use of Theorem~\ref{t:newforced}, the following theorem ensures the existence of a $K_t$ minor.

\begin{restatable}{thm}{minorfrompieces}\label{t:minorfrompieces} There exists $C=C_{\ref{t:minorfrompieces}} >1$ satisfying the following. 
	Let $G$ be a graph with $\kappa(G) \geq  Ct(\log t)^{1/4}$, and let $r \geq \sqrt{\log t}/2$ be an integer. If 
	there exist pairwise vertex-disjoint subgraphs $H_1,H_2,\ldots,H_{r}$ of $G$ such that $\d(H_i) \geq Ct(\log t)^{1/4}$ for every $i \in [r]$  then $G$ has a $K_t$ minor. 
\end{restatable}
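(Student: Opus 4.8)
The plan is to carry out the strategy announced in \cref{s:outline}: extract from each $H_i$ a model of a moderately large complete graph, and then splice these models together, through the high connectivity of $G$, into a model of $K_t$. The one new idea needed is a ``covering'' device which lets $r \approx \sqrt{\log t}$ pieces of density $\approx t(\log t)^{1/4}$ collectively play the role of one graph of density $\approx t\sqrt{\log t}$. Set $k := \lceil (\log t)^{1/4}/2 \rceil$; since $r \ge \sqrt{\log t}/2$ we have $k^2 \le r$ once $t$ is large (for small $t$ a large $C$ makes $H_1$ alone dense enough for a $K_t$ minor by \cref{t:KT}), so after relabelling we may index $k^2$ of the pieces as $H_{(a,b)}$ with $(a,b) \in [k]^2$. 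Partition $[t]$ into $k^2$ classes $T_{(a,b)}$, each of size at most $\lceil t/k^2\rceil$, and to each $j\in[t]$, say $j \in T_{(a,b)}$, attach the \emph{profile} $I_j := (\{a\}\times[k])\cup([k]\times\{b\}) \subseteq [k]^2$ --- the set of pieces that will house a ``fragment'' of the eventual $j$-th branch set of our $K_t$ model. Then $|I_j| = 2k-1 = O((\log t)^{1/4})$ for every $j$; any two profiles meet (if $j\in T_{(a,b)}$ and $j'\in T_{(a',b')}$ then $(a,b')\in I_j\cap I_{j'}$); and for each $(a,b)$ the set $S_{(a,b)} := \{\,j : (a,b)\in I_j\,\}$ has size $m_{(a,b)} := |S_{(a,b)}| \le (2k-1)\lceil t/k^2\rceil = O\!\big(t/(\log t)^{1/4}\big)$.

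Next, in each piece I find a clique minor of the right size. Because $\d(H_{(a,b)}) \ge Ct(\log t)^{1/4}$ while $m_{(a,b)}\sqrt{\log m_{(a,b)}} = O\!\big(t/(\log t)^{1/4}\big)\cdot O\!\big(\sqrt{\log t}\big) = O\!\big(t(\log t)^{1/4}\big)$, provided $C$ exceeds the relevant absolute constants I can apply \cref{t:KT} (in the form: density at least a constant times $m\sqrt{\log m}$ forces a $K_m$ minor, via passing to a subgraph of large minimum degree) to obtain a model of $K_{m_{(a,b)}}$ in $H_{(a,b)}$. Index its branch sets by $S_{(a,b)}$, writing them as $\{Z^{(a,b)}_j : j\in S_{(a,b)}\}$; thus for every pair $j\ne j'$ and every $(a,b)\in I_j\cap I_{j'}$, the fragments $Z^{(a,b)}_j$ and $Z^{(a,b)}_{j'}$ are adjacent in $H_{(a,b)}$.

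Finally I stitch the fragments of each index together. For fixed $j$ the family $\{Z^{(a,b)}_j : (a,b)\in I_j\}$ consists of $|I_j|$ pairwise disjoint connected subgraphs lying in $|I_j| = O((\log t)^{1/4})$ different pieces; joining them along a fixed tree order on $I_j$ requires $|I_j|-1$ linking paths of $G$, so over all $j$ we need $\sum_{j}(|I_j|-1) = O\!\big(t(\log t)^{1/4}\big)$ paths in total. Since $\kappa(G) \ge Ct(\log t)^{1/4}$ exceeds a suitable absolute constant times this number, such paths can be arranged to be internally disjoint and to meet the fragments only at their prescribed endpoints. Granting that, setting $B_j$ to be the union of the fragments $Z^{(a,b)}_j$ ($(a,b)\in I_j$) together with these paths yields a model $\{B_1,\dots,B_t\}$ of $K_t$: each $B_j$ is connected, and for $j\ne j'$ any $(a,b)\in I_j\cap I_{j'}$ supplies an edge of $G$ between $Z^{(a,b)}_j\subseteq B_j$ and $Z^{(a,b)}_{j'}\subseteq B_{j'}$.

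The crux is this last step: keeping the linking paths disjoint from the fragments. The total number of vertices in all fragments need \emph{not} be $O(t(\log t)^{1/4})$ --- a piece can behave like a regular graph of large girth, for which every $K_m$ minor has branch sets of size $m^{1-o(1)}$ --- so one cannot simply delete all fragments from $G$ and invoke a linkage theorem on the remainder, whose connectivity may have collapsed. I would get around this by interleaving the two constructions: first route the $O(t(\log t)^{1/4})$ linking paths in $G$ between \emph{ports} fixed in advance inside the pieces; then locate the clique minor inside the part of each piece left unused (its minimum degree, at least $Ct(\log t)^{1/4}$, drops by at most the $O(t(\log t)^{1/4})$ vertices used so far, still leaving enough for the \cref{t:KT} step when $C$ is large); and finally absorb each port into the branch set it is meant to extend. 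Making this circular argument rigorous --- possibly after a preliminary reduction bounding $\v(G)$ --- is exactly where we follow Thomason's argument from~\cite{Thomason01}, and is the substance of \cref{s:build}.
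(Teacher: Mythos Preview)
Your outline is broadly correct and follows the paper's strategy: a covering design guarantees that every pair of branch sets shares a piece, a clique minor of size $\Theta(t/(\log t)^{1/4})$ is built inside each piece, and $\Theta(t(\log t)^{1/4})$ linking paths stitch the fragments together. Your grid-based design (a $k\times k$ array of pieces with cross-shaped profiles) differs only cosmetically from the paper's (one hub $H_0$ plus pieces indexed by pairs $\{i,j\}\subseteq[y]$, $y\approx(\log t)^{1/4}$; branch sets labelled $(a,i)\in[x]\times[y]$ and routed through $H_0$ and every $H_{\{i,j\}}$); the two coverings are combinatorially equivalent and produce the same parameters.

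The gap is exactly where you locate it, but your sketched fix does not close it. The linking paths live in $G$, not in $G\setminus\bigcup V(H_{(a,b)})$, and a single path may traverse a given piece on arbitrarily many vertices, so the assertion that the minimum degree in a piece ``drops by at most the $O(t(\log t)^{1/4})$ vertices used so far'' is unjustified. Even granting it, \cref{t:KT} applied to the remainder yields only an \emph{unrooted} $K_m$ minor, and ``absorbing each port into the branch set it is meant to extend'' is then itself a rooted-minor problem of the same strength as the one you are trying to solve. The paper resolves both issues at once: it first passes, via \cref{l:connect}, to $\kappa(H_i)\ge\tfrac{C}{2}t(\log t)^{1/4}$, and then applies to each piece a single lemma (\cref{l:rooted}, proved from the Bollob\'as--Thomason machinery of \cref{l:denseminor1,l:denseminor2}) which in one stroke produces a $K_s$ model \emph{rooted} at the designated ports \emph{and} a linkage rerouting every foreign path through that piece, the two being vertex-disjoint. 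The missing ingredient is this simultaneous rerouting, not avoidance.
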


\cref{t:minorfrompieces} is proved in \cref{s:build}. To apply it in the proof of Theorem~\ref{t:main}, we need a bound on the connectivity of $G$. We say that a graph $G$ is \emph{contraction-critical} if  $\chi(H) < \chi(G)$ for every proper minor $H$ of $G$.
Clearly, a minimum counterexample to Theorem~\ref{t:main}   is contraction-critical. This allows us to use the connectivity bound established by Kawarabayshi~\cite{Kaw07}.

\begin{thm}[\cite{Kaw07}]\label{t:minconnectivity}
	Let $G$ be a contraction-critical graph with $\chi(G) \geq k$. Then $\kappa(G) \geq 2k/27$.
\end{thm}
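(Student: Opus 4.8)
The plan is to argue by contradiction, and it is enough to show that every contraction-critical graph $G$ satisfies $\kappa(G) \ge 2\chi(G)/27$ (which is equivalent to the stated theorem, since $\chi(G) \ge k$ then gives $\kappa(G) \ge 2\chi(G)/27 \ge 2k/27$). So set $k := \chi(G)$ and suppose $\kappa(G) < 2k/27$. First I would record the consequences of criticality: deleting a vertex or an edge of $G$ produces a proper minor, so $G$ is vertex- and edge-critical in the usual sense, hence $\delta(G) \ge k-1$ and (Dirac) $G$ has no clique cutset. Fix a minimum cut $T$ with $|T| = \kappa$ and split $V(G) \setminus T$ into the two sides $V_1, V_2$ of the separation, grouping components so that $G[V_1]$ is connected. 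Minimality of $T$ forces every vertex of $T$ to have a neighbour in each $V_i$; and since each $v \in V_i$ has $\deg_G(v) \ge k-1$ with all neighbours in $V_i \cup T$, we get $|V_i| \ge k - \kappa$ and $\delta(G[V_i]) \ge k - 1 - \kappa$, both well above $\kappa$ because $\kappa < 2k/27$.

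The heart of the argument concerns how colourings of the two sides interact on $T$. Each of $G[V_1 \cup T]$ and $G[V_2 \cup T]$ is a proper minor of $G$, hence $(k-1)$-colourable; and if some $(k-1)$-colourings of the two sides induce the \emph{same} partition of $T$ into colour classes, then permuting colours on one side makes the two agree on $T$, and gluing produces a $(k-1)$-colouring of $G$ --- a contradiction. So no partition of $T$ is realizable as the colour-class partition of a $(k-1)$-colouring of $G[V_1 \cup T]$ \emph{and} of $G[V_2 \cup T]$. I would then show that each side in fact realizes a large family of partitions of $T$, so large that the two families must meet. The main device for manufacturing realizable partitions is contraction inside a side: if $S \subseteq T$ is independent in $G$ then $V_1 \cup S$ is connected (every vertex of $S$ has a neighbour in the connected set $V_1$), so $G/(V_1 \cup S)$ is a proper minor whose $(k-1)$-colourings pull back to $(k-1)$-colourings of $G[V_2 \cup T]$ having $S$ as a colour class; splitting $V_1$ into several connected pieces --- possible since $\delta(G[V_1]) \ge k-1-\kappa$ is large --- and contracting each together with a prescribed independent subset of $T$ lets one realize partitions of $T$ whose classes are prescribed independent sets, and symmetrically from the other side. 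Weighing how many partitions each side is thereby forced to realize against how severely the budget of $k-1$ colours constrains a colouring of $G[V_i \cup T]$ --- the point at which the degree bound $\delta(G[V_i]) \ge k-1-\kappa$, which controls the relevant precolouring extensions, enters --- yields the contradiction, and the extremal relationship between $\kappa$ and $k$ needed for the counting to close is exactly what produces the constant $2/27$.

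I expect this last, quantitative step to be the main obstacle. Three points are delicate. First, the tightness of the colour budget: with only $k-1$ colours available, extending a chosen partition of $T$ to a proper colouring of $G[V_i \cup T]$ is not automatic, and one must force it using $\delta(G[V_i]) \ge k-1-\kappa$ together with criticality --- and be careful that the contraction tricks realize the intended partition exactly rather than some coarsening. Second, the case where $G[T]$ is complete or nearly complete, in which independent subsets of $T$ are too small for the contraction device to help; here I would instead try to realize a clique on $T$ as a minor routed through $V_2$ and combine it with $G[V_1 \cup T]$ to produce a $k$-chromatic proper minor of $G$ directly, and must argue that a small $\kappa$ makes such a routing through $V_2$ possible. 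Third, partitioning $V_i$ into connected pieces with the right attachment pattern to $T$, which requires a mild connectivity property of $G[V_i]$ extracted from its minimum degree. A crude version of this scheme already gives $\kappa(G) = \Omega(k)$ (essentially Mader's bound); squeezing out the constant $2/27$ is a matter of optimising the case analysis above.
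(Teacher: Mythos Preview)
The paper does not prove \cref{t:minconnectivity}: it is quoted from Kawarabayashi~\cite{Kaw07} and used as a black box, so there is no proof in the paper to compare your proposal against.

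As for the proposal itself, what you have written is an outline of the standard separation-and-recolouring strategy that does underlie Kawarabayashi's argument, but it is not a proof: by your own admission the ``last, quantitative step'' --- the only place where the constant $2/27$ could possibly appear --- is not carried out. Everything before that point (minimum cut $T$, criticality giving $\delta(G)\ge k-1$, both sides $(k-1)$-colourable, agreement of induced partitions on $T$ yielding a contradiction, contracting connected pieces of one side together with independent subsets of $T$ to constrain colourings on the other side) is correct and is indeed the skeleton of the argument in~\cite{Kaw07}. But the content of the theorem is entirely in the counting you have deferred, and the hand-wave ``weighing how many partitions each side is thereby forced to realize against how severely the budget of $k-1$ colours constrains a colouring'' is not an argument. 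Two concrete issues you would have to resolve: first, contracting $V_1$ together with an independent set $S\subseteq T$ only forces $S$ to be \emph{monochromatic} in the resulting colouring of $G[V_2\cup T]$, not to be a colour class, so your device produces coarsenings of the intended partition rather than the partition itself (you note this, but do not say how to fix it); second, your proposed case split on whether $G[T]$ is ``complete or nearly complete'' is not how the actual proof proceeds, and the routing-a-clique-through-$V_2$ idea you suggest for that case is not obviously available from $\kappa(G)$ small. In~\cite{Kaw07} the constant $2/27$ arises from a specific partitioning lemma for connected subgraphs combined with a careful case analysis on how the colour classes meeting $T$ can be arranged; none of that is present here.
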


In the remainder of this section we deduce \cref{t:main} from Theorems~\ref{t:newforced},~\ref{t:minorfrompieces},~\ref{t:minconnectivity}, Corollary~\ref{c:big} and the following explicit form of \cref{t:KT} from~\cite{Kostochka82}.

\begin{thm}[\cite{Kostochka82}]\label{t:density}
 	Let $t \geq 2$ be an integer. Then every graph $G$  with $\d(G) \geq 3.2 t \sqrt{\log t}$ has a $K_t$ minor.
\end{thm}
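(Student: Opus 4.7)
The plan is to follow Kostochka's original proof from~\cite{Kostochka82}, adapted so that the explicit constant comes out to $3.2$. Let $d := 3.2 \, t\sqrt{\log t}$ and suppose $G$ satisfies $\d(G) \geq d$; the goal is to exhibit a $K_t$ minor in $G$.

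First I would reduce to the case of large minimum degree. Iteratively deleting any vertex $v$ with $\deg_G(v) < \d(G)$ preserves or increases $\d$, so after finitely many steps the process terminates at a non-null subgraph $G_0 \subseteq G$ with $\delta(G_0) \geq \d(G_0) \geq d$. It is enough to find a $K_t$ minor in $G_0$, so from now on I may assume $\delta(G) \geq d$.

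Next I would construct a model of $K_t$ directly, by building $t$ pairwise disjoint connected subgraphs $X_1, \ldots, X_t \subseteq V(G)$ such that every pair $X_i, X_j$ is adjacent. The device is Kostochka's BFS-ball contraction: set $r := \lceil \alpha \sqrt{\log t}\, \rceil$ for a carefully tuned constant $\alpha$, and iteratively grow each $X_i$ as a BFS tree of depth at most $r$, rooted at a vertex chosen to lie outside the already-placed branch sets, and halted as soon as $X_i$ becomes adjacent to every $X_j$ with $j<i$. Since $\delta(G) \geq d$, a BFS ball of radius $r$ grown inside the ``unused'' part of the graph has size at least roughly $(d - O(t \cdot |\text{used}|/v(G)))^{r}$, which is much larger than $t/d^{r-1}$; thus there are enough vertices at depth $\leq r$ to reach every previously constructed $X_j$.

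The bookkeeping that drives the bound is a telescoping inequality: at each step the remaining vertex set still carries minimum degree $\geq d/2$ (say), and the total size $\sum_i |X_i|$ of the branch sets stays below $v(G_0)$. Balancing the depth $r \asymp \sqrt{\log t}$ against the branch-set size $|X_i| \asymp t/\sqrt{\log t}$ and the $\binom{t}{2}$ pairwise-adjacency requirements gives $d = \Theta(t\sqrt{\log t})$. The main obstacle is obtaining the sharp leading constant $3.2$; this requires the precise counting of \cite{Kostochka82}, in which the ball-growth lower bound, the ``lost'' vertices used for connections, and the depth $r$ are optimized simultaneously. Since \cref{t:density} is used only as a black box in the proof of \cref{t:main}, I would not redo the numerical optimization in detail and would cite~\cite{Kostochka82} for the tight constant, presenting only the structural argument above for context.
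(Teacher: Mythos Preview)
The paper does not prove \cref{t:density} at all: it is stated as ``the following explicit form of \cref{t:KT} from~\cite{Kostochka82}'' and used purely as a black box, with no argument given. Your instinct to cite~\cite{Kostochka82} for the result is therefore exactly what the paper does; the sketch you supply is extra material the authors chose to omit entirely.

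As for the sketch itself, it captures the flavor of Kostochka's approach (pass to high minimum degree, build small-radius connected branch sets, link them pairwise), but several of the quantitative claims are loose or not quite right. The estimate that a BFS ball of radius $r$ has size at least roughly $(d - \cdots)^r$ would hold in a tree but not in a general graph, where neighborhoods can overlap heavily; Kostochka's actual argument controls this via a more careful inductive bookkeeping on the edge count lost at each contraction step, not a naive ball-growth bound. Likewise the balance $|X_i| \asymp t/\sqrt{\log t}$ is not the right order: in Kostochka's construction the branch sets are of size $\Theta(\sqrt{\log t})$, and it is the number of them, $t$, together with the radius $\Theta(\sqrt{\log t})$ that produces the $t\sqrt{\log t}$ threshold. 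None of this affects the paper's use of the theorem, since it is only invoked as a citation; but if you intend to include the sketch for context you should either tighten these points or make explicit that it is only a heuristic outline and defer all quantitative claims to~\cite{Kostochka82}.
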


\begin{proof}[Proof of \cref{t:main}]  It suffices to show that for every $\delta>0$  there exists $t_0=t_0(\delta)$ such that for all positive integers $t \geq t_0$,  every graph $G$ with no $K_t$ minor satisfies $$\chi(G)< t(\log t)^{\frac14   + \delta}.$$
	We assume without loss of generality that $\delta < 1/4$.
	
Let $C_1 = C_{\ref{t:newforced}}(\delta)$, and let  $C_2=C_{\ref{t:minorfrompieces}}$. We choose $t_0 \gg C_1,C_2,1/\delta$ implicitly to satisfy the inequalities appearing throughout the proof.
	
Let $t \geq t_0$ be an integer and let $k = t(\log t)^{\frac14   + \delta}$. Suppose for a contradiction that there exists  a graph $G$ with no $K_t$  minor such that $\chi(G) \geq k$. We assume without loss of generality that $G$ is contraction-critical. Thus $\kappa(G) \geq 2k/27$ by \cref{t:minconnectivity}. In particular,  $\kappa(G) \geq  C_2t(\log t)^{1/4}$ for large enough $t$. 

	Choose a maximal collection $H_1,H_2,\ldots,H_{r}$ of pairwise vertex-disjoint subgraphs of $G$ such that $\d(H_i) \geq C_2t(\log t)^{1/4}$ and $\v(H_i) \leq t(\log t)^{3/4}$. 
 By the choice of $G$ and \cref{t:minorfrompieces} we have $r < \sqrt{\log t}/2$. Let $X = \cup_{i \in [r]}V(H_i)$. Then $|X| < t (\log t)^{5/4}$.
	By \cref{c:big} for sufficiently large $t$ we have	$$\chi(H[X]) \leq 2t \log\log t < k/2-1.$$ Thus $\chi(G \setminus X) \geq k/2+1$. 
	
	Let $G'$ be a minimal subgraph of $G \setminus X$ such that $\chi(G') \geq k/2+1$. Then every vertex of $G'$ has degree at least $k/2$, and so $\d(G') \geq k/4$. Let $D= 3.2 t \sqrt{\log t}$. We apply \cref{t:newforced} to $D$ and $G'$. If $G'$ has a minor $J$ with $\d(J) \geq D$, then $G'$ has a $K_{t}$ minor by \cref{t:density}, contradicting the choice of $G$.
	Thus there exists a subgraph $H$ of $G'$ such that  $\v(H) \leq s^{1+\delta} C_1D$ and $\d(H) \geq s^{-\delta}\d(G')/C_1$, where $s=D/\d(G') \leq 13(\log{t})^{1/4-\delta}$.  It is easy to check that for large enough $t$ the above conditions imply $\d(H) \geq  C_2t(\log t)^{1/4}$ and $\v(H) \leq t(\log t)^{3/4}$. Thus the collection  $\{H_1,H_2,\ldots,H_{r},H\}$ contradicts the maximality of $\{H_1,H_2,\ldots,H_{r}\}$.
\end{proof}

\section{Building a $K_t$ minor}\label{s:build}

In this section we prove \cref{t:minorfrompieces}. Our proof uses several additional tools from the literature.

First, we will need each subgraph $H_i$ in the statement of \cref{t:minorfrompieces} to be not only dense, but highly-connected. This is not hard to guarantee  using
a classical result of Mader~\cite{Mader72} which ensures that every dense graph contains a highly-connected subgraph.

\begin{lem}[\cite{Mader72}]\label{l:connect}
	Every graph $G$ contains a subgraph $G'$ such that $\kappa(G') \geq \d(G)/2$.
\end{lem}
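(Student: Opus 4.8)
\medskip

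The plan is to prove this by a standard extremal/minimal-counterexample argument on the number of vertices, exploiting the fact that deleting a low-degree vertex or contracting across a small separator does not decrease the density by much. Concretely, set $d = \d(G)$ and suppose, for contradiction, that $G$ is a vertex-minimal graph with $\d(G) = d$ (or $\ge d$) admitting no subgraph $G'$ with $\kappa(G') \ge d/2$. First I would dispose of trivial cases: if $\v(G) \le d$ then $G$ has fewer than $\binom{\v(G)}{2} \le \v(G)\cdot d/2$ possible edges, so actually we may assume $\v(G) > d$, and in particular $\v(G) \ge d+1 > 2\lceil d/2\rceil$, so a subgraph being $\lceil d/2 \rceil$-connected is not vacuously obstructed by size.

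\medskip

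The core of the argument is the following dichotomy applied to $G$ itself. Either $\kappa(G) \ge d/2$, in which case we are done with $G' = G$; or there is a vertex $v$ of degree at most $d/2$, or a separation $(A,B)$ of $G$ with $|A \cap B| < d/2$ and $A \setminus B, B \setminus A$ both nonempty. In the first sub-case, $G - v$ has
\[
\e(G-v) \ge \e(G) - d/2 = d\,\v(G) - d/2 = d(\v(G)-1) + d/2 > d(\v(G)-1),
\]
so $\d(G - v) > d$, and minimality of $G$ (applied at the density value $d$, noting $\d(G-v) \ge d$) yields a subgraph of $G - v \subseteq G$ with connectivity $\ge d/2$, a contradiction. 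In the second sub-case, I would argue that one of the two ``sides'' $G[A]$ or $G[B]$ already has density $\ge d$: since every edge of $G$ lies in $G[A]$ or in $G[B]$ and $V(G) = A \cup B$, a short counting argument (using $|A \cap B| < d/2$, so the overlap cannot absorb the required edge surplus) shows $\e(G[A]) \ge d\,\v(G[A])$ or $\e(G[B]) \ge d\,\v(G[B])$; as both $A$ and $B$ are proper subsets, minimality again produces the desired highly-connected subgraph inside $G$. The key counting step is: if $\e(G[A]) < d\,\v(A)$ and $\e(G[B]) < d\,\v(B)$, then
\[
\e(G) \le \e(G[A]) + \e(G[B]) < d(\v(A) + \v(B)) = d(\v(G) + |A\cap B|) < d\,\v(G) + d^2/2,
\]
which one must reconcile with $\e(G) = d\,\v(G)$; to make this genuinely contradictory I would instead choose the separation to be as balanced as possible, or equivalently track that a smallest counterexample cannot have such a separator because we can glue back a highly connected subgraph found on one side — this is where the usual presentations differ in bookkeeping.

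\medskip

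The step I expect to be the main obstacle is exactly handling the low-connectivity (separator) case cleanly: the naive edge count above loses a $d^2/2$ term, so one has to be slightly careful — the standard fix is to prove the statement in the sharper form ``$G$ has a subgraph $G'$ with $\delta(G') \ge d$ and $\kappa(G') > \delta(G')/2$'' or to induct with the separator replaced by a complete graph on $A \cap B$ (which only adds $\binom{|A\cap B|}{2} < d^2/8 \cdot(\text{something})$ edges and can be controlled), so that the density on the larger side strictly exceeds $d$ and minimality applies. Since this lemma is quoted verbatim from Mader and is only a tool here, I would in fact just cite \cite{Mader72}; but if a self-contained proof is wanted, the route above — minimal counterexample, delete a vertex of degree $\le d/2$ or split along a separator of order $< d/2$ and pad it to a clique — is the one I would write out, with the clique-padding trick being the decisive idea that makes the separator case go through.
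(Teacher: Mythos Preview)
The paper does not give a proof of this lemma: it is stated with attribution to Mader~\cite{Mader72} and used as a black box. You anticipated this correctly in your proposal, and citing the result is exactly what the paper does.

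As for your sketch itself: the high-level structure (take an extremal subgraph, treat the low-degree-vertex case and the small-separator case separately) is the standard route, and your handling of the low-degree case is fine. However, the ``clique-padding'' fix you suggest for the separator case does not obviously work: once you add the missing edges inside the separator $S$ to form $G[A]^+$, a $k$-connected subgraph of $G[A]^+$ found by minimality may well use those fake edges, and there is no clean reason it must then be a genuine subgraph of $G$. The usual resolution (Mader's original, or the presentation in Diestel) is different: one works not with the bare inequality $\e \ge d\cdot \v$ but with the shifted threshold $\e > (2k-3)(\v - k + 1)$, and takes a subgraph minimal subject to that. The affine offset $-k+1$ is exactly what makes the separator arithmetic close up: if $|S| \le k-1$ and both sides $A,B$ fail the threshold, then
\[
\e(G') \le (2k-3)\bigl(|A|+|B|-2(k-1)\bigr) = (2k-3)\bigl(\v(G') + |S| - 2k + 2\bigr) \le (2k-3)\bigl(\v(G') - k + 1\bigr),
\]
a clean contradiction. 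So your diagnosis of where the difficulty lies is right; the remedy is to choose the inductive invariant more carefully rather than to modify the graph.
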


The technical part of the proof of \cref{t:minorfrompieces} involves linking the models we construct in each $H_i$. To accomplish this we employ a toolkit introduced by Bollob\'as and Thomason~\cite{BolTho96} for finding rooted models in highly connected graphs.

\begin{lem}[\cite{BolTho96}]\label{l:denseminor1}
	Let $G$ be a graph with $d= \d(G) \geq 3$. Then $G$ has a  minor $H$  such that $v(H) \leq d+2$ and $2\delta(H) \geq\v(H) + 0.3d -2$.
\end{lem}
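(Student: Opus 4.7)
The plan is a Bollob\'as--Thomason style extremal-minor argument. Let $m=\lfloor d+2\rfloor$ and consider the family of all minors $F$ of $G$ with $\v(F)\le m$; since $\v(G)\ge 2d+1>m$ (using $d\ge 3$ and the fact that $G$ contains a subgraph with enough vertices), this family is non-empty. Choose $H$ so as to maximize $\e(H)$ and, among those, to minimize $\v(H)$. The first conclusion $\v(H)\le d+2$ is then automatic.

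For the minimum-degree bound, I would first establish that $\e(H)$ is nearly $\binom{m}{2}$. Starting from $G$ and contracting edges one by one to reach $\v=m$, a contraction of $uv$ removes $1+|N(u)\cap N(v)|$ edges. A double-count of cherries using $\e(G)=d\cdot\v(G)$ lets one order the contractions so that the total loss of edges is at most $\e(G)-\binom{m}{2}+\Theta(m)$; hence the resulting minor, and therefore $H$, satisfies $\e(H)\ge \binom{m}{2}-\Theta(m)$. In particular $H$ is ``close to complete''.

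Now suppose for contradiction some $v\in V(H)$ satisfies $2\deg_H(v)<\v(H)+0.3d-2$. Fix a model $\{X_u:u\in V(H)\}$ of $H$ in $G$ and consider $H\setminus v$, which has $\v(H)-1$ vertices and $\e(H)-\deg_H(v)$ edges. Because $H\setminus v$ has strictly fewer vertices than $H$, there is room to ``re-grow'' by splitting some branch set $X_{u^{*}}$ with $u^{*}\ne v$ into two connected parts, producing a new minor $\widetilde H$ of $G$ with $\v(\widetilde H)=\v(H)$ and $\e(\widetilde H)\ge \e(H)-\deg_H(v)+g$, where $g$ counts the edges across the split plus the parallel edges of $G$ between $X_{u^{*}}$ and other branch sets that now become distinct edges of $\widetilde H$. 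Maximality of $\e(H)$ forces $g\le \deg_H(v)$ for every legal split. Averaging this inequality over a suitable choice of $u^{*}$ and a boundary vertex $w\in X_{u^{*}}$, and using $\sum_u|X_u|=\v(G)$ together with $\e(G)=d\,\v(G)$, one extracts the contradiction $2\deg_H(v)\ge \v(H)+0.3d-2$.

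The main obstacle I expect is the quantitative splitting step: the constant $0.3$ is the precise threshold at which such an exchange fails, so extracting it requires a tight accounting that combines the near-complete lower bound $\e(H)\ge \binom{m}{2}-\Theta(m)$, the constraint that both sides of the split remain connected (forcing $w$ to lie on the ``boundary'' of $X_{u^{*}}$), and an averaging identity over the $\v(G)-\v(H)$ vertices lying in non-singleton branch sets. All the intermediate edge-budget inequalities are routine once $\e(H)$ is controlled; the real work is in showing that \emph{some} $(u^{*},w)$ with $g>\deg_H(v)$ must always exist under the negated minimum-degree hypothesis.
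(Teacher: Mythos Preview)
The paper does not prove this lemma. It is quoted verbatim from Bollob\'as and Thomason~\cite{BolTho96} and used as a black box in the proof of \cref{l:rooted}; there is no argument in the present paper to compare your proposal against.

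As for the proposal itself: it is an outline with admitted gaps rather than a proof, and the extremal choice you make (maximize $\e(H)$ among minors with $\v(H)\le m$) is not the one Bollob\'as and Thomason use. Their argument instead takes $H$ to be a minor of $G$ with $\d(H)\ge d$ and $\v(H)$ minimal. Minimality under vertex deletion immediately gives $\delta(H)>d$, and minimality under edge contraction gives that any two adjacent vertices share more than $d-1$ common neighbours; a short count then forces $\v(H)\le d+2$ and the stated bound on $2\delta(H)$. Your route via ``$\e(H)\approx\binom{m}{2}$ plus a splitting/regrowth exchange'' may be salvageable, but the cherry double-count you invoke to control edge loss during contractions is not stated precisely enough to be checked, and the averaging step that is supposed to produce the constant $0.3$ is exactly the part you flag as unresolved. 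The standard minimal-density-minor argument avoids both of these difficulties entirely.
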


\begin{lem}[\cite{BolTho96}]\label{l:denseminor2}
	Let $n \geq 0, k \geq 2$ and $h \geq n+ 3k/2$ be integers. Let $G$ be a graph with $\kappa(G) \geq k$ containing vertex-disjoint non-empty connected subgraphs $C_1,\ldots,C_h$ such that each of them is non-adjacent to at most $n$ others. Let $S=\{s_1,\ldots,s_k\} \subseteq V(G)$. Then $G$ contains vertex-disjoint non-empty connected subgraphs $D_1,\ldots, D_m$ where $m = h -\lfloor k/2\rfloor$, such that $s_i \in V(D_i)$ for each $i \in [k]$ and every element of  $\{D_1,\ldots, D_m\}$ is non-adjacent to at most $n$ subgraphs among $D_{k+1},\ldots, D_m$.
\end{lem}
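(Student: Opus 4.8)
The plan is to reduce the lemma to producing $k$ rooted pieces $D_1,\dots,D_k$, each of which swallows one of the blobs $C_j$ whole, to build those pieces from a single Menger linkage, and then to spend all of the real effort on resolving ``collisions.''

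\emph{Reduction.} First I would argue that it suffices to find pairwise vertex‑disjoint connected subgraphs $D_1,\dots,D_k$ of $G$ with $s_i\in V(D_i)$ such that (a) each $D_i$ contains $V(C_{a(i)})$ for some $a(i)\in[h]$, and (b) the set $\mathcal T:=\{\,j\in[h]:V(C_j)\cap\bigcup_{i\in[k]}V(D_i)\neq\emptyset\,\}$ of ``touched'' blobs satisfies $|\mathcal T|\le\lfloor 3k/2\rfloor$. Indeed, since $h-\lfloor 3k/2\rfloor=h-k-\lfloor k/2\rfloor=m-k\ge 0$ (using $h\ge n+3k/2\ge 3k/2$), one may then take any $m-k$ indices $j\notin\mathcal T$ and let $D_{k+1},\dots,D_m$ be the corresponding blobs $C_j$; by (b) these are pairwise disjoint and disjoint from $D_1,\dots,D_k$. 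The non‑adjacency condition is then inherited: for $j>k$, $D_j$ is an original blob, hence non‑adjacent to at most $n$ of $C_1,\dots,C_h$ and in particular to at most $n$ of $D_{k+1},\dots,D_m$; and for $i\le k$, $D_i\supseteq V(C_{a(i)})$, so $D_i$ is adjacent to every blob that $C_{a(i)}$ is, and therefore non‑adjacent to at most $n$ of $D_{k+1},\dots,D_m$ as well.

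\emph{The linkage and the easy case.} Since $\kappa(G)\ge k$, $|S|=k$, and $|\bigcup_j V(C_j)|\ge h\ge k$, Menger's theorem gives $k$ pairwise vertex‑disjoint $S$–$B$ paths, where $B:=\bigcup_{j\in[h]}V(C_j)$; relabel them $Q_1,\dots,Q_k$ with $Q_i$ starting at $s_i$, and truncate each at its first vertex of $B$ (the truncated paths remain pairwise disjoint), so that $V(Q_i)\cap B=\{b_i\}$ with $b_i$ the endpoint of $Q_i$. Let $C^{(i)}$ be the blob containing $b_i$. If $C^{(1)},\dots,C^{(k)}$ happen to be pairwise distinct, I would simply set $D_i:=G[V(C^{(i)})\cup V(Q_i)]$: these are connected, contain $s_i$ and the whole blob $C^{(i)}$, are pairwise disjoint (the $Q_i$ are disjoint and, being internally disjoint from $B$, miss the blobs $C^{(i')}$ for $i'\neq i$), and touch exactly $k\le\lfloor 3k/2\rfloor$ blobs, so we would be done.

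\emph{Collisions, and the main obstacle.} In general I would group $[k]$ into classes $I_1,\dots,I_q$ according to the blob $C^{*}_g$ containing $b_i$. A singleton class $I_g=\{i\}$ contributes $D_i:=G[V(C^{*}_g)\cup V(Q_i)]$, touching one blob. For a class $I_g$ of size $t\ge 2$ one must split the connected subgraph $W_g:=G[V(C^{*}_g)\cup\bigcup_{i\in I_g}V(Q_i)]$ into $t$ pairwise disjoint connected pieces, one through each $s_i$ ($i\in I_g$), each still containing a whole original blob, while touching at most $t+1$ blobs in total ($C^{*}_g$ together with $t$ further, fresh blobs). There is just enough room for this: $C^{*}_g$ is adjacent to at least $h-1-n\ge 3k/2-1$ other blobs, of which at most $q-1$ are among $C^{*}_1,\dots,C^{*}_q$, leaving at least $3k/2-q\ge t$ fresh neighbouring blobs into which to route the $t$ terminals (using $t+(q-1)\le k$). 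Granting such a split, each class of size $\ge 2$ uses up at least two of the $k$ indices, so there are at most $\lfloor k/2\rfloor$ of them, and summing the per‑class blob counts gives $|\mathcal T|\le k+\lfloor k/2\rfloor=\lfloor 3k/2\rfloor$, as required. The hard part — the technical heart of the lemma — is carrying out this split with only a one‑blob surcharge per multi‑terminal class: when $C^{*}_g$ is $2$‑connected it is a routine fan/Menger argument inside $C^{*}_g$, but for a merely connected $C^{*}_g$ one has to work with its block structure, and for terminals that a cut‑vertex of $C^{*}_g$ would otherwise strand one must reroute through other blobs, invoking the \emph{global} $k$‑connectivity of $G$ (which is precisely what rules out a ``thin'' $C^{*}_g$ pinching a terminal off from every usable blob), all while keeping the surcharge count at $\lfloor k/2\rfloor$. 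This bookkeeping‑heavy case analysis is where I expect essentially all of the difficulty to lie.
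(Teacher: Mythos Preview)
The paper does not prove this lemma; it is quoted from Bollob\'as and Thomason~\cite{BolTho96} (their Lemma~2), with only the remark that the stronger non-adjacency conclusion stated here is already delivered by the family they construct. So there is no in-paper argument to compare your proposal against.

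Your reduction and the collision-free case are correct, and the blob-count (at most $\lfloor 3k/2\rfloor$ touched blobs via a $+1$ surcharge per multi-class) is sound. The gap is precisely where you place it, but it is structural rather than bookkeeping. Nothing in the hypotheses prevents all edges from $C^{*}_g$ to other blobs from being concentrated at a single vertex of $C^{*}_g$; then any connected partition of $C^{*}_g$ into $t\ge 2$ rooted pieces gives at most one piece an exit to a fresh blob, and this obstruction does not disappear when $C^{*}_g$ happens to be $2$-connected. Your fallback --- reroute the stranded terminals through the rest of $G$ using its global $k$-connectivity --- is not available as written: that connectivity has already been fully spent on the linkage $Q_1,\ldots,Q_k$, and there is no evident reason the additional rerouting paths can be found disjoint from the $Q_j$ and from the pieces already committed. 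The two-phase scheme (truncated Menger to $B$ first, then local repair of collisions, then global rerouting as a last resort) does not close; a workable argument has to fold the ``reach a distinct blob'' requirement into the single Menger application itself --- for instance by working in an auxiliary graph in which each $C_j$ is contracted or capped by a private new vertex --- so that the $k$ disjoint paths are forced to terminate in distinct blobs from the outset, with the $\lfloor k/2\rfloor$ surcharge then arising from the blobs those paths traverse.
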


It is worth noting that \cref{l:denseminor2} corresponds to \cite[Lemma 2]{BolTho96}, where the last condition is only stated for subgraphs $D_1,\ldots,D_k$, but the family $D_1,\ldots, D_m$ constructed in the proof has the stronger condition claimed in \cref{l:denseminor2}.

In addition to the above lemmas, we also use one of the main results of~\cite{BolTho96}.

\begin{thm}[\cite{BolTho96}]\label{t:knitted}
 There exists $C=C_{\ref{t:knitted}}> 0$ satisfying the following. Let $s$ be a positive integer, let $G$ be a graph with $\kappa(G) \geq Cs$, and let $S_1,S_2,\ldots,S_k$ be non-empty disjoint subsets of $V(G)$ such that $\sum_{i=1}^{k}|S_i| \leq s$. Then there exist vertex-disjoint connected subgraphs $C_1,\ldots,C_k$ of $G$ such that $S_i \subseteq V(C_i)$ for every $i \in [k]$.
\end{thm}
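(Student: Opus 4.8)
\textbf{Proof proposal for \cref{t:minorfrompieces}.}

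The plan is to first replace each $H_i$ by a highly-connected minor carrying a well-spread small complete-ish minor, then to link all these models together across $G$ using the Bollob\'as--Thomason machinery. Concretely, set $d = Ct(\log t)^{1/4}$ and let $m$ be roughly $t/\sqrt{\log t}$, so that $rm \approx t/2 \cdot (\text{small factor})$ and we will ultimately produce $K_t$ as a minor whose branch sets are distributed across the $r$ pieces plus some ``global'' branch sets realized by linking. First I would apply \cref{l:connect} to each $H_i$ to obtain a subgraph $H_i'$ with $\kappa(H_i') \ge \d(H_i)/2 \ge d/2$; in particular $\d(H_i') \ge \kappa(H_i') \ge d/2$ is still of order $t(\log t)^{1/4}$. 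Then apply \cref{l:denseminor1} inside $H_i'$ to get a minor $H_i''$ with $\v(H_i'') \le d/2 + 2$ and $2\delta(H_i'') \ge \v(H_i'') + 0.15 d - 2$; writing $h_i = \v(H_i'')$, each vertex of $H_i''$ is non-adjacent to at most $n_i := h_i - \delta(H_i'') - 1 \le (h_i - 0.15d)/2 + \text{const}$ other vertices. The point of this step is that $H_i''$ is a ``nearly complete'' graph on $h_i$ vertices in the sense that every vertex misses only $\approx (h_i - \Omega(d))/2$ others, which is exactly the hypothesis shape needed for \cref{l:denseminor2}.

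Next I would iterate \cref{l:denseminor2} to merge all the pieces. The idea is to process the pieces $H_1'',\dots,H_r''$ one at a time: maintain a family of vertex-disjoint connected subgraphs of $G$ that forms a ``near-clique minor'' (each non-adjacent to few others, measured only against the not-yet-finalized members), and at step $j$ use the connectivity of $G$ (via \cref{l:denseminor2} with $k$ a suitable multiple of $t$ — we have $\kappa(G) \ge Ct(\log t)^{1/4} \gg k$ for large $t$ since the linking only needs $k = O(t)$) to absorb $H_j''$ into the growing structure by rooting a model of the running near-clique at $k$ vertices chosen inside $H_j''$ and then contracting. Each application of \cref{l:denseminor2} costs $\lfloor k/2 \rfloor$ branch sets but makes the remaining $\approx t$ of them connected to each other; after processing all $r$ pieces and a final clean-up application, we are left with $m' \ge t$ pairwise-adjacent connected subgraphs of $G$, i.e. a $K_t$ minor. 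One must track the arithmetic: the total ``deficiency'' $\sum_i n_i$ that accumulates must stay below the number of branch sets we want to keep, which is where $r \ge \sqrt{\log t}/2$ and $\d(H_i) \ge Ct(\log t)^{1/4}$ interact — each piece contributes $\Omega(d) = \Omega(t(\log t)^{1/4})$ worth of vertices that are ``fully connected'' while contributing only $O(h_i)$ deficiency, and $r$ pieces give a total of $\Omega(rt(\log t)^{1/4}) = \Omega(t(\log t)^{3/4})$ usable vertices, comfortably more than $t$, while the deficiency and the $O(rt)$ vertices spent on linking remain lower-order. \cref{t:knitted} is available as a fallback for the final linking of the $O(t)$ roots if a direct appeal to \cref{l:denseminor2} is awkward at the boundary.

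The main obstacle I expect is bookkeeping the ``non-adjacent to at most $n$ others'' invariant through the iteration: \cref{l:denseminor2} is stated for a single highly-connected host $G$ with one family of subgraphs and a single root set $S$, so to glue $r$ separate pieces I need to set up, at each stage, the correct host (all of $G$), the correct family (the current near-clique together with the connected pieces of $H_j''$ viewed as subgraphs of $G$), and the correct root set, and then verify that the output family still has small pairwise non-adjacency deficiency — in particular that merging does not blow up $n$. The cleanest way to control this is to arrange that after processing piece $j$ the ``active'' part of the structure is a genuine clique minor (deficiency $0$) of size $\ge t + O(t)$, paying the $O(t)$ slack to \cref{l:denseminor2}'s $\lfloor k/2\rfloor$ loss and to the residual deficiency $n_j = O(h_j)$ from each piece; since $\sum_j O(h_j) = O(\sum_j d) = O(rt(\log t)^{1/4})$ and this is $o(rt(\log t)^{3/4})$... wait, it is the same order, so more care is needed — in fact $\sum_j h_j \le \sum_j (d/2+2) \le rd \le rt(\log t)^{1/4}$ while the usable ``complete'' vertices number $\sum_j (h_j - n_j) \ge \sum_j \Omega(d) = \Omega(rt(\log t)^{1/4})$ with a strictly larger constant, so choosing $C$ large enough makes the complete part dominate by any desired constant factor, and $rd/2 \ge t$ already holds once $r \ge \sqrt{\log t}/2$ and $d \ge Ct(\log t)^{1/4}$ give $rd \ge (C/2)\, t (\log t)^{3/4} \gg t$. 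So the delicate point is purely the constant-chasing in \cref{l:denseminor1} and \cref{l:denseminor2}, which I would isolate into a single lemma: ``$r$ graphs, each with $\kappa \ge d/2$ and the \cref{l:denseminor1} minor property, living vertex-disjointly inside a graph of connectivity $\ge C t (\log t)^{1/4}$, yield a $K_t$ minor,'' proved by the induction sketched above.
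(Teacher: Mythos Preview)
First, a mismatch: the statement you were asked about is \cref{t:knitted}, which the paper does not prove at all --- it is quoted from Bollob\'as--Thomason~\cite{BolTho96} and used as a black box. Your write-up is instead a proof sketch for \cref{t:minorfrompieces}. I will comment on it as such.

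Your plan has a genuine gap at the ``iterate \cref{l:denseminor2} to merge the pieces'' step. The hypothesis of \cref{l:denseminor2} requires that each $C_i$ be non-adjacent to at most $n$ others \emph{within the family you feed in}. If the family is the current near-clique (of size $M$, say) together with the branch sets of $H_j''$, then every branch set of $H_j''$ is non-adjacent to \emph{all} $M$ members of the current near-clique and vice versa, so the admissible $n$ is at least $M$. The output family then has each member non-adjacent to up to $n\ge M$ of the unrooted part, so you have not gained a larger near-clique: the deficiency tracks the size of whichever side was already built, and never drops below it. Your own bookkeeping paragraph in fact contains a slip --- you assert that $\sum_j (h_j-n_j)$ carries a ``strictly larger constant'' than $\sum_j h_j$, which cannot be, since $h_j-n_j\le h_j$ term by term. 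So the scheme as written does not close, and I do not see a reorganisation of the iteration that rescues it without essentially re-deriving the paper's architecture.

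The paper proceeds quite differently. It fixes $y=\lfloor(\log t)^{1/4}\rfloor$, $x=\lceil t/y\rceil$, and assigns to the $r\ge \binom{y}{2}+1$ pieces specific roles: one ``hub'' $H_0$ and one piece $H_{\{i,j\}}$ for each pair $i\ne j$ in $[y]$. It first uses \cref{t:knitted} in $G$ to lay down, for every ordered pair $(i,j)$, a bundle of $x$ vertex-disjoint paths from $H_{\{i,j\}}$ to $H_0$. Then, inside each $H_{\{i,j\}}$, it invokes a purpose-built lemma (\cref{l:rooted}, proved from \cref{l:denseminor1}, \cref{l:denseminor2}, a random split, and \cref{t:density}) to produce a rooted $K_{2x}$ model at the $2x$ path-ends while simultaneously rerouting all foreign paths through that piece. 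Finally \cref{t:knitted} is applied once more inside $H_0$ to tie the $xy$ bundles together. The $K_t$ model is then read off directly; there is no iterative merging, and \cref{l:denseminor2} is used only locally, inside a single highly connected piece, where its hypothesis is available with $n$ genuinely small relative to $h$. The step you were groping toward --- turning a dense piece into a rooted complete minor while letting other linkages pass through --- is exactly what \cref{l:rooted} packages, and it is the missing idea in your sketch.
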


The value of $C_{\ref{t:knitted}}$ is not explicitly given in~\cite{BolTho96}, but it is not hard to see that $C_{\ref{t:knitted}}=22$ suffices. Thomas and Wollan~\cite{ThoWol05} improve the bounds from~\cite{BolTho96}, and the results of~\cite{ThoWol05} directly imply that  $C_{\ref{t:knitted}}=10$ satisfies \cref{t:knitted}.  The exact value of $C_{\ref{t:knitted}}$  does not substantially affect our bounds.    

The next lemma  is used to construct the pieces of our model of $K_t$. Let $l$ be a positive integer. Given a collection  $\mc{S} = \{(s_i,t_i)\}_{i \in [l]}$ of pairs of vertices of a graph $G$ (where $s_i$ and $t_i$ are possibly the same) \emph{an $\mc{S}$-linkage $\mc{P}$} is a collection of vertex-disjoint paths $\{P_1,\ldots,P_l\}$ in $G$ such that $P_i$ has ends $s_i$ and $t_i$ for every $i \in [l]$.   

\begin{lem}\label{l:rooted} There exists $C=C_{\ref{l:rooted}} >0$ satisfying the following. 
	Let $G$ be a graph,  let $l \geq s \geq 2$ be positive integers.  Let $s_1,\ldots, s_l,$ $t_1,\ldots,t_l,$  $r_1,\ldots, r_s \in V(G)$ be distinct, except possibly $s_i=t_i$ for some $i \in [l]$.
	If $$\kappa(G) \geq C \cdot\max\{l, s\sqrt{\log s}\},$$	
	then there exists a $K_{s}$ model $\mc{M}$ in $G$ rooted at $\{r_1,\ldots,r_s\}$ and an $\{(s_i,t_i)\}_{i \in [l]}$-linkage $\mc{P}$ in $G$ such that $\mc{M}$ and $\mc{P}$ are vertex-disjoint. 
\end{lem}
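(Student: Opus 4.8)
The plan is to produce $\mc{M}$ and $\mc{P}$ together, using the rooted-minor machinery recalled above and spending the connectivity budget carefully: roughly $\Theta(s\sqrt{\log s})$ of it to manufacture the $\binom s2$ adjacencies needed for the $K_s$-model — a task for which a direct use of \cref{t:knitted} would be hopelessly wasteful, costing $\Theta(s^2)$ — and roughly $\Theta(l)$ of it to route the $l$ paths of the linkage, the two uses being kept vertex-disjoint. I would proceed in two phases.

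First I would build the rooted $K_s$-model. Write $T=\{s_1,\dots,s_l,t_1,\dots,t_l,r_1,\dots,r_s\}$, so $|T|\le 2l+s$. Since $\delta(G)\ge\kappa(G)$, deleting $T$ leaves a graph of density $\Omega(\max\{l,s\sqrt{\log s}\})$, hence at least $3.2\,s\sqrt{\log s}$ once $C$ is large. Applying \cref{l:connect} and then \cref{l:denseminor1} inside $G-T$ yields vertex-disjoint connected subgraphs $C_1,\dots,C_h$ of $G-T$ with $h=\Omega(s\sqrt{\log s})$, each non-adjacent to only a small absolute-constant fraction of the others, whose associated minor has density at least $3.2\,s\sqrt{\log s}$ and hence, by \cref{t:density}, contains a $K_s$ minor; pulling this minor back through the $C_j$'s gives a $K_s$-model $\mc{B}=\{B_1,\dots,B_s\}$ in $G-T$, each $B_i$ a union of only a bounded number of the $C_j$'s. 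Next I would invoke \cref{l:denseminor2} with prescribed set $R=\{r_1,\dots,r_s\}$ to absorb the roots into $B_1,\dots,B_s$ — enlarging each $B_i$ only by short connecting pieces drawn from the as-yet-unused $C_j$'s and from $G-T$ — arriving at a $K_s$-model $\mc{M}=\{M_1,\dots,M_s\}$ of $G$ with $r_i\in V(M_i)$ and $V(M_i)\cap R=\{r_i\}$, while a large near-complete sub-family of the $C_j$'s remains untouched and disjoint from $\mc{M}$.

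Second I would route the linkage. The construction of $\mc{M}$ used only a controlled portion of the connectivity, so that $G':=G-V(\mc{M})$ still has $\kappa(G')=\Omega(l)$: if the $M_i$ cannot be taken small, one argues this through the preserved near-complete sub-family of the $C_j$'s, which serves as a highly-connected core inside $G'$. Since $r_1,\dots,r_s\notin V(\mc{P})$ is automatic from $R\subseteq V(\mc{M})$ and the remaining terminals $s_j,t_j$ lie in $G'$, I would apply \cref{t:knitted} inside $G'$ to the $l$ sets $\{s_j,t_j\}$ (total size at most $2l$); as $\kappa(G')\ge 2C_{\ref{t:knitted}}\,l$, this produces vertex-disjoint connected subgraphs of $G'$, one containing each $\{s_j,t_j\}$, from which I extract the paths $P_j$ (a one-vertex path at $s_j$ when $s_j=t_j$). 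Then $\mc{M}$ and $\mc{P}=\{P_1,\dots,P_l\}$ are as required, and are vertex-disjoint by construction.

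The main obstacle I anticipate is the simultaneous, vertex-disjoint bookkeeping: absorbing the $s$ roots, realising all $\binom s2$ adjacencies of $\mc{M}$, and routing the $l$ linkage paths must together consume only $O(\max\{l,s\sqrt{\log s}\})$ of the connectivity of $G$. The delicate point is that the branch sets of $\mc{M}$ may be large, so one cannot naively delete $V(\mc{M})$ and invoke a connectivity bound for the remainder; instead the argument must carry along a near-complete reservoir of branch sets, disjoint from $\mc{M}$, to guarantee enough connectivity for the linkage phase. It is precisely in making all of this fit — and in getting the constants to cooperate (the fraction of missing adjacencies from \cref{l:denseminor1} and the loss in \cref{l:denseminor2}) — that the hypothesis $\kappa(G)\ge C\max\{l,s\sqrt{\log s}\}$ is used in full.
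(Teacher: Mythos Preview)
Your plan assembles the right tools (\cref{l:denseminor1}, \cref{l:denseminor2}, \cref{t:density}) and correctly identifies that the $K_s$-model must cost only $O(s\sqrt{\log s})$ of the connectivity, but Phase~2 has a genuine gap that you yourself flag and do not close. To apply \cref{t:knitted} in $G':=G-V(\mc{M})$ you need $\kappa(G')\ge 2C_{\ref{t:knitted}}l$. The branch sets $M_i$ are unions of the $C_j$'s coming from \cref{l:denseminor1}, and there is no bound on $|V(C_j)|$; hence $|V(\mc{M})|$ may vastly exceed $\kappa(G)$, and $\kappa(G')$ can drop to zero. Your proposed fix --- retaining a near-complete reservoir of unused $C_j$'s inside $G'$ --- gives you a dense \emph{minor} of $G'$, not connectivity of $G'$, and the linkage terminals $s_j,t_j$ still have to reach that reservoir through $G'$. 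That step again needs connectivity of $G'$, so the argument is circular. (A secondary issue: \cref{l:denseminor2} does not ``absorb roots into pre-chosen $B_i$'s''; it produces a fresh family $D_1,\dots,D_m$ in which the first $k$ members contain the prescribed vertices. So the order ``build $\mc{B}$, then apply \cref{l:denseminor2}'' is backwards.)

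The paper's proof sidesteps both problems with a different architecture. It applies \cref{l:denseminor2} \emph{once}, with the full set $S=\{s_1,\dots,s_l,t_1,\dots,t_l,r_1,\dots,r_s\}$ of size $k=2l+s$ (not just the roots), obtaining $D_1,\dots,D_m$ where each of $D_1,\dots,D_k$ contains exactly one terminal and every pair of $D_i$'s has $\Omega(d)$ common neighbours in $\mc{D}'=\{D_{k+1},\dots,D_m\}$. A random split $\mc{D}'=\mc{M}'\cup\mc{D}''$ (Chernoff) ensures every pair still has $\ge k$ common neighbours in $\mc{D}''$ while $\mc{M}'$ remains dense enough for \cref{t:density} to yield a $K_s$-model. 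Each root $r_i$ is then linked to its branch set via one element of $\mc{D}''$, and each linkage path is built as $D_a\cup D'\cup D_b$ for a single common neighbour $D'\in\mc{D}''$. No call to \cref{t:knitted} is made, and the connectivity of $G-V(\mc{M})$ is never invoked. The two ideas your plan is missing are: (1) feed \emph{all} terminals, including the linkage endpoints, into the single \cref{l:denseminor2} call; (2) the random-split reservoir $\mc{D}''$ with the common-neighbour property, which lets you assemble both $\mc{M}$ and $\mc{P}$ out of the same pieces rather than deleting $V(\mc{M})$ and starting over.
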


\begin{proof} Our choice of $C$ will be implicit, i.e.~we assume that it is chosen to satisfy the inequalities appearing throughout the proof.
	
Let $d=\d(G) \geq \kappa(G)/2 \geq Cl/2$. By~\cref{l:denseminor1} there exists a model $\mc{H}$ of a graph $H$ in $G$ such that $\v(H) \leq d+2$, and every vertex in $H$ has at most $\v(H)/2-d/10$ non-neighbors. 

Let $h=\v(H)$, $n=h/2-d/10$, and let $k = 2l+s \leq 3l \leq d/150$. (The last inequality assumes $C \geq 900$.) Then $h \geq n+3k/2$. By~\cref{l:denseminor2} applied to the elements of $\mc{H}$ and $S=\{s_1,\ldots, s_l,t_1,\ldots,t_l, r_1,\ldots, r_s\}$, there exist a collection $\mc{D}= \{D_1,\ldots, D_m\}$ of vertex-disjoint non-empty connected subgraphs $D_1,\ldots, D_m$ of $G$ where $m =  h - \lfloor k/2 \rfloor$, such that $D_1,\ldots,D_{k}$ each contain exactly one vertex from $S$, and every element of $\mc{D}$ is non-adjacent to at most $n$ subgraphs in $\mc{D}':= \{D_{k+1},\ldots, D_m\}$. We may assume without loss of generality that $r_i \in V(D_i)$ for every $i \in [s]$.
As $|\mc{D}'| \geq h-3k/2$, every two elements of $\mc{D}$ have at least $$|\mc{D}'|-2n-2 \geq d/5 - 3k - 2 \geq d/6$$ common neighbors in $\mc{D'}$. 

Let $\mc{D}'' \subseteq \mc{D}'$ be chosen by selecting each element of $\mc{D}'$ independently at random with probability $1/2$. Then by the Chernoff bound the probability that a given pair of elements of $\mc{D}$ have fewer than $d/24$ common neighbors in $\mc{D}''$ is at most $e^{-d/100}$. For sufficiently large $C$ we have $(d+2)^2e^{-d/100} < 1/2$, and thus by linearity of expectation there exists $\mc{D}'' \subseteq \mc{D}'$ such that $|\mc{D}''| \leq h/2$ and every pair of elements of $\mc{D}$ has at least $d/24 \geq k$ common neighbors in $\mc{D}''$. Let $\mc{M}'=\mc{D}'-\mc{D}''$, then $|\mc{M}'| \geq h/2-3k/2$. Note that every element of $\mc{M}'$ is non-adjacent to at most $n$ other elements of $\mc{M}'$,  and hence is adjacent to at least $$|\mc{M}'|-n \geq \left(h-\frac{3k}{2}\right) - \left(h-\frac{d}{5}\right) -2 = \frac{d}{10} -\frac{3k}{2} -2 \geq \frac{d}{12} \geq \frac{C}{24} s\sqrt{\log s}$$ other elements of $\mc{M}'$. By \cref{t:density}, there exists a model $\mc{M}''=\{M'_1,\ldots,M'_s\}$ of $K_s$ in $G$ such that each element of $\mc{M}''$ is a union of vertex sets of elements of $\mc{M}'$. By the choice of $\mc{D}''$, there exists $\{D'_1,\ldots,D'_s\} \subseteq \mc{D}''$ such that  $D'_i$ is adjacent to $D_i$ and $G[M'_i]$ for every $i \in [s]$. Let $\mc{M}=\{M'_i \cup V(D_i) \cup V(D'_i)\}_{i \in [s]}$. Then $\mc{M}$ is a model  of $K_s$ in $G$, rooted at $\{r_1,\ldots,r_s\}$. Similarly, using  $l$  elements of $\mc{D}'' - \{D'_1,\ldots,D'_s\}$, we find an $\{(s_i,t_i)\}_{i \in [l]}$-linkage $\mc{P}$ in $G$, such that $\mc{P}$ is vertex-disjoint from $\mc{M}$, as desired. 			
\end{proof}	

We are now ready to prove  \cref{t:minorfrompieces}, which we restate for convenience.\minorfrompieces*

\begin{proof} Again we will choose $C=C_{\ref{t:minorfrompieces}}$ implicitly, sufficiently large with respect to $C_{\ref{t:knitted}}$ and $C_{\ref{l:rooted}}$. By \cref{l:connect}, replacing each $H_i$ by a subgraph as necessary, we may assume that $$\kappa(H_i) \geq \frac C 2 t(\log t)^{1/4},$$ instead of   $\d(H_i) \geq Ct(\log t)^{1/4}$. Let $y = \lfloor (\log t)^{1/4} \rfloor$ and  $x = \lceil t/ y \rceil$. Then $xy \geq t$ and it suffices to show that $G$ has a $K_{xy}$ minor. We reindex the graphs $H_1,\ldots,H_{\binom{y}{2}+1}$ to $H_0$ and $\{H_{\{i,j\}}\}_{\{i,j\} \subseteq [y]}$. By choosing $C$ appropriately large, we may assume that $\kappa(G) \geq xy(y-1)$. Then it follows from~\cref{t:knitted} that there exist vertex-disjoint linkages $\mc{Q}_{(i,j)}$ for all $i,j \in [y]$ with $i \neq j$, such that each $\mc{Q}_{(i,j)}$ consists of $x$ paths $Q^{1}_{(i,j)}, \ldots, Q^{x}_{(i,j)} $ each starting in $V(H_{\{i,j\}})$, ending in $V(H_0)$ and otherwise disjoint from $V(H_{\{i,j\}}) \cup V(H_0)$. Let $\mc{Q} = \cup_{i,j \in [y], i \neq j} \mc{Q}_{(i,j)}$. 
	
	We now apply \cref{l:rooted} consecutively to each of the subgraphs $H=H_{\{i,j\}}$ with $s=2x$, and $l \leq xy(y-1)-2x$ equal to the number of paths in $\mc{Q} - \mc{Q}_{(i,j)} - \mc{Q}_{(j,i)}$   which intersect $H$. The vertices $\{(s_i,t_i)\}_{i \in [l]}$ are then chosen to be the first and last vertex of these paths in $H$, while the vertices $r_1,r_2,\ldots,r_{s}$ are the ends of the paths  $\mc{Q}_{(i,j)} \cup \mc{Q}_{(j,i)} $ in $H$. By using  the linkage $\mc{P}$ given by  \cref{l:rooted} to reroute the paths in $\mc{Q}  - \mc{Q}_{(i,j)} - \mc{Q}_{(j,i)}$ within $H$, we may assume that $H$ contains a $K_{2x}$ model $\mc{M}_{\{i,j\}}$ rooted at $\{r_1,r_2,\ldots,r_{s}\} \subseteq V(\mc{Q}_{(i,j)}) \cup V(\mc{Q}_{(j,i)} )$, which is  otherwise disjoint from $V(\mc{Q})$.
 		
 	Finally we need to join the ends of paths in $\mc{Q}$ in $H_0$.	
	By \cref{t:knitted} there exist vertex-disjoint connected subgraphs $\{C^{a}_{i}\}^{a \in [x]}_{i\in [y]}$ of $H_0$ such that $V(C^{a}_{i})$	contains the ends of paths $Q^{a}_{(i,j)}$ for all $j \in [y]-\{i\}$, and is otherwise disjoint from $V(\mc{Q})$. These $xy$ connected subgraphs together with the paths of $\mc{Q}$ ending in them, and the elements of the  $K_{2x}$ models containing the second ends of these paths now form the elements of a $K_{xy}$ model in $G$, as desired.
\end{proof}

\section{Finding a small dense subgraph}\label{s:force}

In this section we prove \cref{DenseSubgraph2}. The proof is based on two theorems, \cref{kclawDense} about unbalanced bipartite graphs and \cref{DenseSubgraph3} about general graphs. We prove \cref{kclawDense} in Subsection~\ref{Bip} and \cref{DenseSubgraph3} in Subsection~\ref{Shrub}. Finally in Subsection~\ref{Combined}, we prove \cref{DenseSubgraph2} by combining these two theorems. However, first we will need some preliminaries.

An important concept to the proofs of Theorems~\ref{kclawDense} and~\ref{DenseSubgraph3} is that of a mate, defined as follows. 

Let $G$ be a graph, and let $K,d\ge 1$, $\varepsilon \in (0,1)$ be real. We say that two vertices of $G$ are \emph{$(\varepsilon,d)$-mates} if they have at least $\varepsilon d$ common neighbors. We say that $G$ is \emph{$(K,\varepsilon, d)$-unmated} if  every vertex of degree at most $Kd$ in $G$ has strictly fewer than $\varepsilon d$ $(\varepsilon,d)$-mates.

We need the following useful proposition which shows that if a graph does not contain a small dense subgraph, then every $k$-bounded minor of it is unmated (for the appropriate choice of constants).

\begin{proposition}\label{SmallDenseBounded}
		Let $k,d \ge 1$,  $\varepsilon\in (0,1)$.  If there does not exist a subgraph $H$ of a graph $G$ with $\v(H) \le 3k^3d$ and $\e(H)\ge \varepsilon^2 d^2/2$, then every $k$-bounded minor of $G$ is $(k^2,\varepsilon,d)$-unmated.
\end{proposition}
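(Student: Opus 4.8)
The plan is to prove the contrapositive: assuming $G'$ is a $k$-bounded minor of $G$ that is \emph{not} $(k^2,\varepsilon,d)$-unmated, I will produce a subgraph $H$ of $G$ with $\v(H)\le 3k^3d$ and $\e(H)\ge \varepsilon^2 d^2/2$, which is exactly the forbidden configuration. By the definition of ``unmated'', the hypothesis yields a vertex $v\in V(G')$ with $\deg_{G'}(v)\le k^2 d$ that has at least $\varepsilon d$ $(\varepsilon,d)$-mates in $G'$. First I would fix a model $\mathcal{X}$ of $G'$ in $G$ with every branch set of size at most $k$, writing $X_z\in\mathcal{X}$ for the branch set of $z\in V(G')$; then set $N=N_{G'}(v)$, so $|N|\le k^2 d$, and let $W$ be a set of exactly $\lceil \varepsilon d\rceil$ mates of $v$. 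By the definition of a mate, each $w\in W$ has at least $\varepsilon d$ neighbours lying in $N$.

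The candidate subgraph is $H=G\bigl[\bigcup_{z\in W\cup N}X_z\bigr]$. The vertex bound is routine: $\v(H)\le k\,|W\cup N|\le k\bigl(\lceil \varepsilon d\rceil+|N|\bigr)\le k(\varepsilon d+1+k^2 d)\le 3k^3 d$, using $\varepsilon<1$ and $k,d\ge 1$. For the edge bound I would count the ordered pairs $(w,u)$ with $w\in W$, $u\in N$, and $wu\in E(G')$: there are at least $\sum_{w\in W}|N_{G'}(w)\cap N|\ge |W|\cdot \varepsilon d\ge \varepsilon^2 d^2$ of them. Each such pair yields, via the model, an edge of $G$ joining $X_w$ to $X_u$, hence an edge of $H$; and since distinct vertices of $G'$ have disjoint branch sets, any fixed edge of $H$ arises from at most the two pairs $(w,u)$ whose branch sets meet its two endpoints. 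Thus the pair-to-edge assignment is at most $2$-to-$1$, giving $\e(H)\ge \varepsilon^2 d^2/2$ and the desired contradiction.

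There is no serious obstacle here; the only subtlety — and the reason the factor $1/2$ shows up in alternative (i) — is that a mate of $v$ may also be adjacent to $v$, so $W$ and $N$ need not be disjoint and the pair-to-edge map cannot be forced to be injective. The remaining steps, including the degenerate cases where $\varepsilon d<1$ (so ``at least $\varepsilon d$ mates / common neighbours'' just means ``at least one''), are elementary size bookkeeping.
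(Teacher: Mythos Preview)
Your proof is correct and follows essentially the same approach as the paper. The paper first forms the subgraph $H'=G'[N(v)\cup\{v,v_1,\dots,v_{\lceil\varepsilon d\rceil}\}]$ of the minor, verifies $\v(H')\le 3k^2d$ and $\e(H')\ge\varepsilon^2 d^2/2$ there, and then passes to the corresponding subgraph $H$ of $G$ via the $k$-bounded model; you instead pull back to $G$ immediately and do the vertex and edge counts directly in $H$, but the underlying counting argument (mates contribute $\ge\varepsilon d$ edges each into $N(v)$, with a possible factor-of-two overcount) is identical.
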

\begin{proof}
Assume for a contradiction that there exists a $k$-bounded minor $G'$ of $G$ that is not $(k^2,\varepsilon, d)$-unmated. Then there exists $v \in V(G')$ with $\deg_{G'}(v) \leq k^2d$ such that $v$ has at least $\varepsilon d$ $(\varepsilon,d)$-mates in $G'$. Let $v_1, \ldots, v_{\lceil \varepsilon d \rceil}$ be distinct $(\varepsilon,d)$-mates of $v$ in $G'$. Let $H' = G'[N(v) \cup \{v,v_1,\ldots, v_{\lceil \varepsilon d \rceil} \}]$. Then $\v(H') \le 1 + k^2d + \lceil \varepsilon d \rceil \le 3k^2d$ and $\e(H')\ge \varepsilon^2 d^2/2$. Since $H'$ is a $k$-bounded minor of $G$, it corresponds to a subgraph $H$ of $G$ with $\v(H) \le k \cdot \v(H') \leq 3k^3d$ and $\e(H)\ge \e(H')  \ge \varepsilon^2 d^2/2$, a contradiction.
\end{proof}

We also need a few definitions involving forests in a graph as follows. Let $F$ be a forest in a graph $G$. For any real number $k\ge 1$, we say $F$ is \emph{$k$-bounded} if $\v(T)\le k$ for every component $T$ of $F$. For any real numbers $d \ge 1$ and $\varepsilon \in (0,1)$, we say $F$ is \emph{$(\varepsilon,d)$-mate-free (in $G$)}  if no two distinct vertices in any component of $F$ are $(\varepsilon,d)$-mates in $G$. If $G=(A,B)$ is bipartite, then we say $F$ is a \emph{star forest from $B$ to $A$} if every component of $F$ is a star with a center in $B$.

\subsection{Dense minors in unbalanced bipartite graphs}\label{Bip}

In this subsection, we prove the following theorem about unbalanced bipartite graphs using an alternating path argument.


\begin{restatable}{thm}{kClawDense}\label{kclawDense}
	Let $\ell \geq 2$ be an integer, and let $\varepsilon_0 \in (0,\frac{1}{2\ell})$ and  $d_0 \ge 1/\varepsilon_0$ be real. Let $G=(A,B)$ be a bipartite graph such that $|A| > \ell |B|$ and every vertex in $A$ has at least $d_0$ neighbors in $B$. Then $G$ contains at least one of the following:
	\begin{description}
		\item[(i)] a subgraph $H$ with $\v(H) \le 3d_0$ and $\e(H)\ge \varepsilon_0^2 d_0^2/2$, or
		\item[(ii)] an $(\ell+1)$-bounded minor $G'$ with $\d(G') \ge \frac{\ell}{2}(1- 2 \ell \varepsilon_0)d_0$.
	\end{description}
\end{restatable}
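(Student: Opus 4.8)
The plan is to produce the minor in (ii) by finding an $(\varepsilon_0,d_0)$-mate-free star forest $F$ from $B$ to $A$ in which (essentially) every vertex of $B$ is the centre of a star with exactly $\ell$ leaves, and then contracting each star; mate-freeness forces the $\ell$ leaf-neighbourhoods of a single star to pairwise overlap in fewer than $\varepsilon_0 d_0$ vertices, so each of the resulting branch sets ends up with roughly $\ell d_0$ neighbours in $G'$, which is what yields the density $\tfrac{\ell}{2}(1-2\ell\varepsilon_0)d_0$. The two features that make such an $F$ exist are the slack $|A|>\ell|B|$ (there is always an uncovered vertex of $A$ to work with) and the scarcity of mates, and the tool that assembles $F$ is an alternating path argument. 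I would begin with two reductions. First, since every vertex of $A$ has at least $d_0$ neighbours in $B$, a routine rounding argument lets us pass to a spanning subgraph and assume $d_0$ is an integer and every vertex of $A$ has degree exactly $d_0$; this keeps $|A|>\ell|B|$, only makes (i) and (ii) easier to attain, and — crucially — bounds $|N(a)|$ for $a\in A$. Second, I claim (i) may be assumed to fail in the strong form that every vertex of $A$ has fewer than $\varepsilon_0 d_0$ $(\varepsilon_0,d_0)$-mates: if some $a\in A$ had a set $M$ of $\lceil\varepsilon_0 d_0\rceil$ mates, then $G[\{a\}\cup N(a)\cup M]$ would have at most $3d_0$ vertices (here $|N(a)|=d_0$ is used) and at least $\varepsilon_0^2 d_0^2$ edges, giving (i). In particular the ``mate graph'' on $A$ has maximum degree $<\varepsilon_0 d_0$.

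The core is to arrange that (most of) $B$ can be saturated by disjoint full mate-free stars, and this is where the alternating path argument enters. I would first peel off Hall-type obstructions: while some $B'\subseteq B$ satisfies $|N_A(B')|<\ell|B'|$ in the current graph, delete $B'$ together with $N_A(B')$. This is harmless: since $|A|>\ell|B|$, the set $B$ itself is never such a set, and if $B'$ is deleted then $|A\setminus N_A(B')|=|A|-|N_A(B')|>\ell|B|-\ell|B'|=\ell|B\setminus B'|$, so the process preserves ``$|A|>\ell|B|$'' and never removes all of $B$; moreover a vertex of $A$ that survives all peels has no neighbour in any deleted set, so it still has all $d_0$ of its neighbours present. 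Hence we end with a subgraph $\tilde G=(\tilde A,\tilde B)$ with $\tilde B\neq\emptyset$, $|\tilde A|>\ell|\tilde B|$, every vertex of $\tilde A$ of degree exactly $d_0$ with all its neighbours in $\tilde B$, and the $\ell$-fold Hall condition $|N_{\tilde A}(S)|\ge\ell|S|$ holding for all $S\subseteq\tilde B$ (otherwise we would peel again). Thus $\tilde G$ has an $\ell$-matching saturating $\tilde B$, and, by the slack, an uncovered vertex of $\tilde A$. Now the alternating path argument upgrades this to a mate-free saturation: starting from such a matching (viewed as a star forest) I would repeatedly use augmenting paths to reroute a mate-conflicting leaf of some star out to an uncovered vertex of $\tilde A$, restoring mate-freeness one conflict at a time. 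The mate scarcity keeps this productive — at every vertex of the search fewer than $\ell\cdot\varepsilon_0 d_0<d_0/2$ of the roughly $d_0$ local options are blocked by a mate — so the search either terminates with every vertex of $\tilde B$ the centre of a full mate-free star, or the region it explored carries so many mate-edges (each certifying $\varepsilon_0 d_0$ common neighbours) that a bounded-size subgraph as in (i) emerges; the second outcome is excluded by the mate count.

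With $F$ in hand, let $G'$ be obtained from $G$ by contracting each star of $F$ and discarding every other vertex. Each branch set is a star on $\ell+1$ vertices, so $G'$ is an $(\ell+1)$-bounded minor and $\v(G')=|\tilde B|$. Consider a branch set with centre $b$ and leaf set $L$. Every vertex of $L$ has exactly $d_0$ neighbours, all in $\tilde B$, and no two vertices of $L$ are $(\varepsilon_0,d_0)$-mates, so $|N_{\tilde B}(L)|\ge \ell d_0-\binom{\ell}{2}\varepsilon_0 d_0$; since every vertex of $\tilde B$ is a centre, this branch set is adjacent in $G'$ to at least $|N_{\tilde B}(L)|-1$ others (subtracting the centre $b$ itself). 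Using $1\le\varepsilon_0 d_0$ to absorb the $-1$ and $\tfrac{\ell-1}{2}+\tfrac1\ell\le 2\ell$, the degree of this branch set in $G'$ is at least $\ell d_0(1-2\ell\varepsilon_0)$. Summing over the $|\tilde B|$ branch sets gives $2\e(G')\ge|\tilde B|\,\ell d_0(1-2\ell\varepsilon_0)$, whence $\d(G')\ge\tfrac{\ell}{2}(1-2\ell\varepsilon_0)d_0$ and (ii) holds. (The degenerate situation in which the peeling shrinks $\tilde B$ so far that some surviving vertex of $\tilde A$ is forced to repeat neighbours across many vertices — i.e.\ to acquire many mates — is exactly the case where (i) holds instead.)

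The main obstacle is the alternating path argument of the second paragraph. The mate-freeness constraint is not matroidal — whether a leaf may be added to a star depends on the star's current leaf set — so the augmentation has to be organised with care, and the real work is to show that when the search gets stuck the blocked configuration localizes into the bounded-size dense subgraph that (i) requires, rather than merely yielding a large sparse obstruction. A secondary technical point is checking that the Hall-peeling interacts correctly with both the degree regularization and the inequality $|A|>\ell|B|$, and that low-degree vertices of $B$, which cannot host an $\ell$-leaf star at all, are absorbed by the same dichotomy.
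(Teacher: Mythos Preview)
Your reductions and your endgame are both right and match the paper: regularise to exact degree $d_0$, assume (i) fails so that every $a\in A$ has fewer than $\varepsilon_0 d_0$ $(\varepsilon_0,d_0)$-mates, aim for a mate-free $(\ell+1)$-bounded star forest, contract, and compute the density via $\ell d_0-\binom{\ell}{2}\varepsilon_0 d_0$. The difference, and the gap, is in how the star forest is produced.

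The paper does \emph{not} try to saturate any prescribed set of centres. It takes a \emph{maximal} mate-free $(\ell+1)$-bounded star forest $F_0$, picks an uncovered $u\in A$ (which exists because $|A|>\ell|B|$), and defines an alternating path so that the mate-free constraint is baked into the path: an edge $u'v'\notin E(F_0)$ with $u'\in A$, $v'\in B$ is only allowed if the star at $v'$ contains no mate of $u'$. Letting $F$ be the union of the $F_0$-components whose centres are reachable from $u$, maximality of $F_0$ forces every component of $F$ to already have $\ell$ leaves, and the alternating-path definition forces every leaf of $F$ to have at most $\varepsilon_0 d_0$ neighbours in $B\setminus V(F)$ (otherwise the path would extend further). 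Thus the paper gets, in one stroke, a subset of centres whose stars are full \emph{and} whose leaves have their neighbourhoods concentrated back in that subset; this is exactly what the density calculation needs. No Hall condition, no peeling, and no attempt to upgrade a pre-existing matching.

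Your route reverses the order: first obtain a full $\ell$-matching via Hall, then try to massage it into a mate-free one by rerouting conflicting leaves along augmenting paths. This is the step you flag as the main obstacle, and it is a genuine gap as written. Mate-freeness is, as you note, not matroidal: an augmenting path that removes one conflict can create new conflicts at every intermediate swap, so ``number of conflicts'' is not obviously a decreasing potential, and your local count (``fewer than $\ell\varepsilon_0 d_0<d_0/2$ options blocked'') only controls the choice at a single step, not the global reachability of the particular conflicted star you are trying to fix. The paper sidesteps this entirely by never leaving the mate-free world: it maximises within it and then analyses the reachable set. Your Hall-peeling is also unnecessary --- its only purpose is to force leaves to have all $d_0$ neighbours inside $\tilde B$, but the paper's ``few neighbours outside $V(F)$'' claim delivers the same concentration (losing only a harmless $\varepsilon_0 d_0$) directly from the alternating-path structure.

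In short: keep your reductions and your density computation, drop the Hall-peeling, and swap ``start full, then make mate-free'' for ``start maximal mate-free, then use alternating reachability from an uncovered $u$ to carve out a full, neighbourhood-closed subforest.'' That is the missing idea.
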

\begin{proof}
We may assume without loss of generality that every vertex in $A$ has exactly $d_0$ neighbors in $B$. Now first suppose that $G$ is not $(1,\varepsilon_0,d_0)$-unmated. By Proposition~\ref{SmallDenseBounded} with $k=1$, there exists a subgraph $H$ of $G$ with $\v(H) \le 3d_0$ and $\e(H)\ge \varepsilon_0^2 d_0^2/2$. Hence (i) holds, as desired. So we may assume that $G$ is $(1,\varepsilon_0,d_0)$-unmated. 

Let $F_0$ be an $(\varepsilon_0,d_0)$-mate-free $(\ell+1)$-bounded star forest from $B$ to $A$ such that $\v(F_0)$ is maximized. Note that $B\subseteq V(F_0)$ since $\v(F_0)$ is maximized. Yet $|A\cap V(F_0)| \le \ell |B| < |A|$. Hence $A\setminus V(F_0)\ne \emptyset$.

Choose $u\in A\setminus V(F_0)$. For each $v\in V(G)$ with $v\ne u$, we say that a path $P$ in $G$ from $u$ to $v$ is a \emph{($u,v$)-$F_0$-alternating path} if 
\begin{itemize}
	\item every internal vertex of $P$ has degree exactly one in $F_0 \cap P$ (that is - informally - every other edge of $P$ is in $F_0$), and
	\item there does not exist $u'v'\in E(P)\setminus E(F_0)$ with $u'\in A$, $v'\in B$ and a vertex $w$ in the component of $F_0$ containing $v'$ such that $u'$ and $w$ are $(\varepsilon_0,d_0)$-mates.
\end{itemize}

 Let $F$ be the subgraph of $F_0$ consisting of all the components $T$ of $F_0$ such that there exists a ($u,v$)-$F_0$-alternating path,  where $\{v\}=V(T)\cap B$.

Note that $F$ is non-empty as $u$ has at least $\varepsilon_0 d_0+1$ neighbors in $B$ (since $\varepsilon < 1$) but at most $\varepsilon_0 d_0$ $(\varepsilon_0,d_0)$-mates in $A$ as $G$ is $(1,\varepsilon_0,d_0)$-unmated and ${\rm deg}_G(u) = d_0$. 

\begin{claim}\label{SizeEll}
Every component of $F$ has exactly $\ell$ edges.
\end{claim}
\begin{proof}
Suppose not. That is, there exists a component $T$ of $F$ with $e(T) < \ell$. Let $\{v\}= V(T)\cap B$.	By the definition of $F$, there exists a $(u,v)$-$F_0$-alternating path $P$. Let $F_0' = F_0 \triangle P$. It follows that $F_0'$ is an $(\varepsilon_0,d_0)$-mate-free $(\ell+1)$-bounded star forest from $B$ to $A$. Yet $\v(F_0') > \v(F_0)$, contradicting the choice of $F_0$.
\end{proof}

\begin{claim}\label{NeighborsOut}
Every vertex in $V(F)\cap A$ has at most $\varepsilon_0 d_0$ neighbors in $B\setminus V(F)$.
\end{claim}
\begin{proof}
Suppose not. That is, there exists $w\in V(F)\cap A$ such that $w$ has strictly more than $\varepsilon_0 d_0$ neighbors in $B\setminus V(F)$. Since $G$ is $(1,\varepsilon_0,d_0)$-unmated, it follows that there exists $v\in N(w) \cap B\setminus V(F)$ such that the component of $F_0$ containing $v$ does not contain a $(\varepsilon_0,d_0)$-mate of $w$. 

Let $x\in B$ such that $wx\in E(F)$. By definition of $F$, there exists a $(u,x)$-$F_0$-alternating path $P_0$. If $w\notin V(P_0)$, define $P:= P_0+xw$; otherwise, define $P:= P_0-xw$. Now $P$ is a $(u,w)$-$F_0$-alternating path. But then $P'=P+wv$ is a $(u,v)$-$F_0$-alternating path and hence $v\in V(F)$, a contradiction.
\end{proof}

Let $G_1$ be obtained from $G[V(F)]$ by identifying $A\cap V(C)$ for each component $C$ of $F$. Note that $G_1$ is bipartite. Let $M$ be the perfect matching in $G_1$ corresponding to $F$. Let $G' := G_1/M = G/E(F)$. Then $\v(G_1) = 2\cdot \v(G')$.

By Claim~\ref{SizeEll}, we have that $\v(G[F]) = (\ell+1)\cdot \v(G')$. By Claim~\ref{NeighborsOut} and the fact that every vertex in $A$ has $d_0$ neighbors in $G$, we have that every vertex in $A\cap V(F)$ has degree at least $(1-\varepsilon_0)d_0$ in $G[V(F)]$. Since $F$ is $(\varepsilon_0,d_0)$-mate-free, it follows that every vertex in $V(G_1)\cap A$ has degree at least
$$\ell(1-\varepsilon_0)d_0 - \binom{\ell}{2}\varepsilon_0 d_0 \ge \ell \left(1- \ell \varepsilon_0\right) d_0$$
in $G_1$, where the last inequality follows since $\ell \ge 1$. Since $|V(G_1)\cap A|=|M|=\v(G')$, we have that 
$$\e(G_1) \ge \ell \left(1- \ell \varepsilon_0\right) d_0 \cdot \v(G').$$ 
Since $G_1$ is bipartite, each edge in $G'$ corresponds to at most two edges in $G_1-M$. It follows that
$$\e(G') \ge \frac{\e(G_1)}{2}-|M| \ge \left(\frac{\ell}{2} \left(1- \ell \varepsilon_0\right) d_0-1\right) \cdot \v(G'),$$
and hence 
$$\d(G') \ge \frac{\ell}{2} \left(1-\ell \varepsilon_0\right) d_0 - 1 \geq \frac{\ell}{2} (1-2\ell \varepsilon_0) d_0,$$
where the last inequality follows since $\varepsilon_0 \le \frac{1}{2\ell}$. Since $G'$ is an $(\ell+1)$-bounded minor of $G$, (ii) holds, as desired.
\end{proof}

\subsection{Dense minors in general graphs}\label{Shrub}

The main result of this subsection is Theorem~\ref{DenseSubgraph3}.

First, we need the following definition and proposition. Let $T$ be a tree. We say a vertex $v$ of $T$ is a \emph{centroid} of $T$ if for every edge $e\in E(T)$ incident with $v$, the component of $T-e$ containing $v$ has at least $\v(T)/2$ vertices. Let $v$ be a vertex of $T$ that is not a centroid of $T$. If $e\in E(T)$ is an edge incident with $v$ such that the component $H$ of $T-e$ containing $v$ has at most $\frac{\v(T)-1}{2}$ vertices, then we say $e$ is the \emph{central edge for $v$} in $T$ and that $H$ is the \emph{peripheral piece for $v$}. We need the following theorem of Jordan~\cite{Jordan1869} from 1869 (see~\cite{BLW} for an English translation and history). We include a proof for completeness. 

\begin{proposition}\label{UniqueCenter}
	The number of centroids in a non-empty tree is either one or two.
\end{proposition}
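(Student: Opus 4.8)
The statement to prove is Proposition~\ref{UniqueCenter}: a non-empty tree has either one or two centroids. The plan is to argue by a local-move / extremal argument on the function $w_T(v)$ defined as the number of vertices in the largest component of $T - v$ (the ``weight'' of $v$). A centroid, in the language introduced just above the proposition, is exactly a vertex $v$ such that every component of $T - e$ containing $v$ (for $e$ incident to $v$) has at least $\v(T)/2$ vertices; equivalently, every component of $T - v$ has at most $\v(T)/2$ vertices, i.e. $w_T(v) \le \v(T)/2$. I would first record this reformulation, then show (a) at least one centroid exists, and (b) there are at most two, and if there are two they are adjacent.

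\textbf{Existence.} Pick $v$ minimizing $w_T(v)$ over $V(T)$. Suppose $v$ is not a centroid, so some component $H$ of $T-v$ has $|V(H)| > \v(T)/2$. Let $e = vu$ be the edge from $v$ into $H$, and let $u \in V(H)$ be its other end. I claim $w_T(u) < w_T(v)$, contradicting minimality. Indeed, the components of $T - u$ are: the subtrees of $H$ hanging off $u$ (each a strict subset of $V(H)$, but more importantly each disjoint from $V(T) \setminus V(H)$), plus one component that is $(V(T) \setminus V(H)) \cup \{v\} \cup (\text{the rest})$ — precisely, the component of $T-u$ containing $v$, call it $H'$, satisfies $V(H') = V(T) \setminus V(H'')$ where $H''$ is the union of the subtrees of $H$ strictly below $u$; so $|V(H')| = \v(T) - |V(H)| + 1 \le \v(T) - \v(T)/2 - 1 + 1 < \v(T)/2 \le w_T(v)$ (using $|V(H)| > \v(T)/2$, hence $|V(H)| \ge \lceil \v(T)/2 \rceil + \tfrac12$... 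I would just write $|V(H)| \ge \v(T)/2$, strictly, carefully). And every other component of $T-u$ is contained in $V(H) \setminus \{u\}$, which... here I need that each such component has size $\le w_T(v)$ too; each is a subtree of $H$, and $H$ itself was one piece of $T - v$, but a subtree of $H$ could still be large. The cleaner route: each component $C$ of $T-u$ with $C \ne H'$ is a component of $T - \{u,v\}$ sitting inside $H$, hence $C$ together with $u$ is a subtree of $T-v$ properly contained in $H$... no, I want it contained in a component of $T-v$. Actually each such $C$ is contained in $V(H) \setminus \{u\} \subseteq V(T)$ and is a component of $T - v$ restricted — it is contained in the component $H$ of $T-v$, so $|V(C)| \le |V(H)|$, which is not obviously $\le w_T(v)$. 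So I instead argue: each such $C$, being a component of $T-u$ other than the one containing $v$, corresponds to an edge $e' = uy$ with $y \in V(C)$; then $C \subsetneq V(H)$ and $v \notin V(C)$, and in fact $C$ is a component of $T - v$? No. The correct standard fact is: $w_T(u) = \max(|V(H')|, \max_{C} |V(C)|)$ and I claim $\max_C |V(C)| \le \v(T)/2$ automatically because... hmm, that is false in general (e.g. a path). Let me reconsider: in a path $P_n$, the central vertex(es) minimize $w$, and $w = \lceil (n-1)/2 \rceil$; moving toward the large side does decrease toward... the point is the inequality $w_T(u) < w_T(v)$ holds because \emph{$v$ moves into the unique heavy side and everything on the light side plus the old position becomes a single component strictly smaller than before, while no component on the heavy side can be as large as the old heavy side $H$} — but a component on the heavy side could equal, e.g., $|V(H)| - 1$. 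So I actually need $w_T(v)$ to be the minimum, and compare: is $|V(H)| - 1 < w_T(v)$? We have $w_T(v) \ge |V(H)| > \v(T)/2$, so $|V(H)|-1 < |V(H)| \le w_T(v)$? That gives $\le$, not $<$. Hmm. OK — the honest clean argument: take $v$ \emph{not} a centroid; then there's a unique heavy side $H$ (unique since two components each of size $> \v(T)/2$ is impossible); move along $e=vu$; then $w_T(u) \le \max(|V(H)| - 1, \v(T) - |V(H)| + 1)$, wait is the first term right — the largest component of $T-u$ that lies inside $H$ has at most $|V(H)|-1$ vertices since it excludes $u$; yes. And the second term $\v(T)-|V(H)|+1 < \v(T) - \v(T)/2 + 1$... not quite $\le |V(H)|-1$ in general, but both are $\le |V(H)| - 1 < |V(H)| \le w_T(v)$? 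The second: $\v(T) - |V(H)| + 1 \le |V(H)| - 1 \iff \v(T) + 2 \le 2|V(H)| \iff |V(H)| \ge \v(T)/2 + 1$, true since $|V(H)| > \v(T)/2$ and integers. So $w_T(u) \le |V(H)| - 1 < w_T(v)$. \emph{That} works — strict decrease, contradiction. So $v$ is a centroid: existence done.

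\textbf{At most two, and adjacent.} Suppose $v, v'$ are distinct centroids. Let $e=vu$ be the first edge on the $v$–$v'$ path, and let $H$ be the component of $T-v$ containing $v'$ (hence containing $u$). Since $v$ is a centroid, $|V(H)| \le \v(T)/2$. Now consider $v'$ and the component $H^*$ of $T-v'$ containing $v$: it contains $V(T) \setminus V(H)$, so $|V(H^*)| \ge \v(T) - |V(H)| \ge \v(T)/2$. Since $v'$ is a centroid, $|V(H^*)| \le \v(T)/2$, forcing $|V(H^*)| = \v(T)/2$ (so $\v(T)$ even) and $|V(H)| = \v(T)/2$. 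Equality $|V(H^*)| = \v(T) - |V(H)|$ means $H^* = V(T) \setminus V(H)$ exactly, i.e. $V(H^*) \cap V(H) = \emptyset$; but $V(H^*)$ is the vertex set of a connected subtree containing $v$ and $V(H)$ contains $u$; the only way removing $v'$ separates things so that $u \notin V(H^*)$ while $v' $'s side toward $v$ is exactly the complement of $H$ is if $v' = u$. Hence $v'$ is adjacent to $v$. Finally, suppose there were three distinct centroids $v_1, v_2, v_3$; by the above each pair is adjacent, so they form a triangle, impossible in a tree. Hence at most two centroids.

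\textbf{Main obstacle.} The only genuinely delicate point is the existence argument — specifically, verifying the strict inequality $w_T(u) < w_T(v)$ when moving from a non-centroid $v$ to its neighbor $u$ on the heavy side. The key quantitative facts are (i) uniqueness of the heavy component (two components of $T-v$ of size $> \v(T)/2$ would overcount vertices), and (ii) the integer inequality $|V(H)| > \v(T)/2 \Rightarrow |V(H)| \ge \lfloor \v(T)/2 \rfloor + 1 \ge \v(T)/2 + \tfrac12$, which is what upgrades the two $\le |V(H)| - 1$ bounds on the components of $T-u$ into a strict $< w_T(v)$. Everything else (the at-most-two part and the triangle-freeness finish) is a short direct argument. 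I would present the whole thing using the weight function $w_T$ rather than the ``central edge / peripheral piece'' language, since that makes the monovariant transparent, and only remark at the end how it matches the definitions given before the proposition.
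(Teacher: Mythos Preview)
Your approach is correct and classical, but it differs from the paper's. You minimize the weight $w_T(v)$ (size of the largest component of $T-v$) over vertices, show a minimizer is a centroid by moving toward the heavy side, and then separately prove that any two centroids are adjacent (so three would force a triangle). The paper instead runs an extremal argument over \emph{edges}: among all pairs $(e,T')$ with $T'$ a component of $T-e$ and $\v(T')\ge\v(T)/2$, it picks one with $\v(T')$ minimum, and reads off the centroid(s) directly according to whether $\v(T')=\v(T)/2$ (both ends of $e$) or $\v(T')>\v(T)/2$ (only the end of $e$ in $T'$). The paper's single extremal choice handles existence and the count simultaneously and is shorter; your route is more modular, makes the adjacency of two centroids explicit, and identifies centroids as precisely the minimizers of $w_T$. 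One arithmetic slip to correct in your existence argument: the component of $T-u$ containing $v$ has exactly $\v(T)-|V(H)|$ vertices, not $\v(T)-|V(H)|+1$, since $V(T)\setminus V(H)$ already contains $v$. With the correct count your target inequality $w_T(u)\le|V(H)|-1<|V(H)|=w_T(v)$ follows immediately from $|V(H)|>\v(T)/2$ and integrality; with the spurious $+1$, the intermediate claim ``$|V(H)|\ge\v(T)/2+1$, true since $|V(H)|>\v(T)/2$ and integers'' fails when $\v(T)$ is odd.
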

\begin{proof}
Let $T$ be a non-empty tree. If $\v(T)=1$, then $T$ has exactly one centroid as desired. So we may assume that $\v(T)\ge 2$. Now choose $e\in E(T)$ and $T'$ a component of $T-e$ such that $\v(T')\ge \frac{\v(T)}{2}$ and subject to those conditions $\v(T')$ is minimized. Such a choice exists since $\v(T)\ge 2$. If $\v(T')=\frac{\v(T)}{2}$, then the ends of $e$ are precisely the centroids of $T$ as desired. Otherwise $\v(T') > \frac{\v(T)}{2}$ and the end of $e$ in $T'$ is precisely the only centroid of $T$ as desired.
\end{proof}
	
	
	


\begin{restatable}{thm}{DenseThree}\label{DenseSubgraph3}
Let $k \ge \ell \ge 2$ be integers. Let $\varepsilon \in \left(0,\frac{1}{4k}\right)$. Let $G$ be a graph with $d=\d(G) \ge 1/\varepsilon$. Then $G$ contains at least one of the following:
\begin{description}
	\item[(i)] a subgraph $H$ with $\v(H)\le 3k^3d$ and $\e(H)\ge \varepsilon^2d^2/2$, or
	\item[(ii)] a bipartite subgraph $H=(X,Y)$ with $|X| > \ell |Y|$ such that  every vertex in $X$ has at least $(1-2k\varepsilon)d$ neighbors in $Y$, or
	\item[(iii)] a $k$-bounded minor $G'$ with $\d(G') \ge \frac{k}{8\ell} (1-2k\varepsilon) d$.
\end{description}
\end{restatable}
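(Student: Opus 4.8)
I would first dispose of the easy reductions. If $\v(G)\le 3k^3d$, then since $\d(G)=d\ge 1/\varepsilon$ forces $\v(G)\ge 2d$ and hence $\e(G)=d\,\v(G)\ge 2d^2\ge \varepsilon^2d^2/2$, outcome (i) holds with $H=G$; so assume $\v(G)>3k^3d$. Passing to a subgraph I may also assume every vertex of $G$ has degree at least $d$ (this only increases $\d(G)$, and all the hypotheses and conclusions are phrased in terms of the fixed parameter $d$). Now assume that neither (i) nor (ii) holds and aim for (iii). By \cref{SmallDenseBounded} the failure of (i) means that \emph{every} $k$-bounded minor of $G$ is $(k^2,\varepsilon,d)$-unmated; in particular, since at most $2\v(G)/k^2$ vertices have degree exceeding $k^2d$, all but a small set $B$ of vertices have fewer than $\varepsilon d$ mates. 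The failure of (ii) translates into a non-expansion statement: for every $Y\subseteq V(G)$, fewer than $\ell|Y|$ vertices outside $Y$ have at least $d':=(1-2k\varepsilon)d$ neighbours in $Y$. The whole strategy is to contradict this non-expansion statement whenever the density-amplification step below gets ``stuck''.

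The heart of the proof is an amplification step: given a bounded minor $G'$ of $G$ with large minimum degree and $\d(G')\ge\rho$, I would choose an \emph{extremal} $(\varepsilon,d)$-mate-free forest $F'$ in $G'$ whose components have bounded size — extremal meaning $F'$ spans $V(G')$ (isolated vertices may always be adjoined), has the fewest components subject to that, and, subject to that, optimizes a secondary quantity controlling multi-edges in the contraction. This is where the centroid machinery of \cref{UniqueCenter} is used: the central-edge/peripheral-piece language lets one, in the course of an augmenting rearrangement of $F'$, detach a small connected subtree from a component while keeping the remainder a legal (connected, bounded, mate-free) tree, and lets one finally chop any oversized component into pieces of exactly the target size. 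If $F'$ has few components, then $G'':=G'/E(F')$ has many fewer vertices than $G'$ but — using mate-freeness (which forbids two vertices of a component from sharing many neighbours, hence bounds the number of parallel edges between components), the extremality of $F'$, and the absence of a small dense subgraph — retains a constant fraction of the edges; so $\d(G'')$ exceeds $\d(G')$ by a definite factor depending on $\ell$, and $G''$ is still a bounded minor of $G$. If instead $F'$ has many components, then many are small, and each small component that cannot be merged or augmented into a neighbour must route all but at most $\approx 2k\varepsilon d$ of the degree of one of its vertices into a common small ``obstruction'' set $Y$ (built from the neighbouring large components and the alternating-path reachability structure); this produces more than $\ell|Y|$ vertices with at least $d'$ neighbours in $Y$, contradicting the non-expansion statement. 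Hence the amplification always succeeds.

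Starting from $G$ itself with $\rho_0=d$ and iterating the amplification $O(\log k)$ times, each step multiplying the density by a factor bounded away from $1$ and multiplying the component-size bound by a constant factor, one reaches a minor that is still $k$-bounded in $G$ and has density at least $\frac{k}{8\ell}(1-2k\varepsilon)d$, which is exactly outcome (iii). Along the way one must check that each intermediate graph still has minimum degree of order $d$ (contraction drops degrees only mildly, and one can re-pass to a minimum-degree subgraph at the start of each step, keeping the density) and is still $(k^2,\varepsilon,d)$-unmated (automatic from \cref{SmallDenseBounded}, since all intermediate graphs are $k$-bounded minors of $G$), and that the bipartite subgraph extracted in the ``stuck'' case indeed satisfies the exact hypothesis of \cref{kclawDense}, namely $|X|>\ell|Y|$ with every $x\in X$ having at least $(1-2k\varepsilon)d$ neighbours in $Y$.

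I expect the main obstacle to be the amplification step, and specifically the two quantitative bounds that must match the constants in the statement: first, that the parallel edges created when contracting $F'$ destroy only a controlled fraction of the edge set — this is what mate-freeness and the failure of (i) must buy, and it is delicate precisely because a single contraction can a priori lose a factor of order $k^2$, which forces the iterative (rather than one-shot) structure; and second, that the analysis of stuck small components yields a \emph{common} target set $Y$ for more than $\ell|Y|$ vertices, since a stuck component only obviously concentrates the neighbourhood of one of its vertices, and one must organize many such components around a single small $Y$ to violate non-expansion. Getting both bounds to line up with $\tfrac{1}{8\ell}$ and $1-2k\varepsilon$ is exactly what the careful choice of the extremal forest and of the centroid-based surgery is designed to do.
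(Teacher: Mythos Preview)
Your proposal assembles the right tools—mate-free bounded forests, centroid surgery, the small/large component dichotomy—but its iterative architecture differs from the paper's and contains a real gap. The paper performs a \emph{single} contraction: with $A=\{v:\deg(v)\le kd\}$, it chooses one $k$-bounded forest $F$ on $A$, extremal not for ``fewest components'' but for the potential $\mathrm{small}_k(F)=\sum_T\max\{k-3\v(T),0\}$, subject to the running constraint $\e(G)-\e(G/E(F))\le 2\varepsilon d\bigl(k\v(G)-\mathrm{small}_k(F)\bigr)$. Each augmentation (merge two non-mate components, possibly after deleting one central edge to keep sizes $\le k$) drops the potential by at least $1$ and costs at most $2\varepsilon d$ edges, so the final $G'=G/E(F)$ satisfies $\e(G')\ge(1-2k\varepsilon)\e(G)$ in one pass. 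The obstruction set $Y$ is explicit and global: $Y=B\cup C$ with $B=V(G)\setminus A$ and $C$ the centroids of trees of size $>2k/3$, giving $|Y|\le \frac{5}{k}\v(G)$. No alternating-path reachability is used here—that device belongs to \cref{kclawDense}, not this theorem. A single local surgery shows every vertex lying in a tree of size $<k/3$ has at least $(1-2k\varepsilon)d$ neighbours in $Y$, and the dichotomy on whether $|X|>\ell|Y|$ immediately yields (ii) or (iii).

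The genuine gap in your plan is the iteration. Outcome (ii) asks for a bipartite \emph{subgraph of $G$} in which every $x\in X$ individually has at least $(1-2k\varepsilon)d$ neighbours in $Y$. If the ``stuck'' case occurs at a later round, inside a proper bounded minor $G'$, the degree condition you obtain is for contracted vertices of $G'$; a branch set $T_v\subseteq G$ sending many edges into $\bigcup_{y\in Y'}T_y$ need not contain any single vertex of $G$ with $(1-2k\varepsilon)d$ neighbours in a common target. So after round one you can neither certify (ii) in $G$ nor rule out getting stuck, and the non-expansion hypothesis you invoke (failure of (ii)) is a statement about $G$, not about its minors. The smallness potential is precisely what lets the paper replace your ``$O(\log k)$ rounds, constant factor each'' with a single round that already achieves the full $\frac{k}{8\ell}(1-2k\varepsilon)$ compression and so sidesteps this lifting problem entirely.
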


\begin{proof}
Suppose not.	We may assume without loss of generality that  $\d(H) < \d(G)$  for every proper subgraph $H$ of $G$, and hence $\delta(G)>d$. 
	 
Let $A = \{v\in V(G): {\rm deg}(v) \le kd\}$ and $B=V(G)\setminus A$.  Then  $kd|B| \le 2\e(G) = 2d\cdot \v(G)$. Hence  $|B|\le \frac{2}{k}\cdot \v(G)$.  

For a forest $F$ in $G$ define the \emph{$k$-smallness} of $F$ as 
$${\rm small}_k(F) := \sum_{C\in \mathcal{C}(F)} \max\left\{k - 3\cdot \v(C),0\right\},$$
where $\mc{C}(F)$ is the set of components of $F$.	
	
Let $F$ be a $k$-bounded forest with $V(F)=A$ such that
\begin{equation}\label{e:star}
\e(G)-\e(G/E(F)) \le 2\varepsilon d \left( k\cdot \v(G) - {\rm small}_k(F) \right),
\end{equation}
and subject to that ${\rm small}_k(F)$ is minimized. Note that such an $F$ exists as the edgeless graph with $V(F)=A$ is $1$-bounded and satisfies (\ref{e:star}).

Let $G'=G/E(F)$. Since $F$ is $k$-bounded, $G'$ is a $k$-bounded minor of $G$. Thus, since (i) does not hold, we have by Proposition~\ref{SmallDenseBounded} that $G'$ is $(k^2,\varepsilon,d)$-unmated.

Let $C$ be the set of centroids of components $T$ of $F$ with $\v(T) > \frac{2k}{3}$. 	By Proposition~\ref{UniqueCenter}, every component of $F$ has either one or two centroids. Hence $|C| \le 2 \left(\frac{3}{2k}\cdot \v(G)\right) = \frac{3}{k}\cdot \v(G)$. Let $Y=B\cup C$. Then $|Y| \le \frac{5}{k} \cdot \v(G)$. Finally let $X$ denote the set of vertices in components $T$ of $F$ with $\v(T) < \frac{k}{3}$.

\begin{claim}\label{XNeighbors}
Every vertex in $X$ has at least $(1-2k\varepsilon) d$ neighbors in $Y$. 
\end{claim}
\begin{proof}
Suppose not. That is, there exists a vertex $v\in X$ with fewer than $(1-2k\varepsilon) d$ neighbors in $Y$. Since $\delta(G) \ge d$, this implies that $X$ has at least $2k\varepsilon d$ neighbors in $V(G)\setminus Y$. Let $T$ be the component of $F$ containing $v$ and let $x_T$ denote the vertex of $G'$ corresponding to $T$. Since $G'$ is $(k^2,\varepsilon,d)$-unmated and ${\rm deg}_{G'}(x_T)\le \sum_{u \in V(T)}\deg(u) \leq  k^2d$, we have by definition that $x_T$ has at most $\varepsilon d$ $(\varepsilon,d)$-mates in $G'$.

Since $2k\varepsilon d > k\varepsilon d + k-1$, as $d\ge 1/\varepsilon$, it follows that $v$ has a neighbor $u\in V(G)\setminus Y$ in a component $T'$ of $F$ such that $T'\ne T$ and the vertex $x_{T'}$ corresponding to $T'$ in $G'$ is not an $(\varepsilon,d)$-mate of $x_T$ in $G'$. 

First suppose that $\v(T') \le \frac{2k}{3}$. Let $F_1=F+uv$. Thus $T'':=(T \cup T')+uv$ is a component of $F_1$. Since 
$$\v(T'') = \v(T)+\v(T') \le \frac{k}{3} + \frac{2k}{3} \le k.$$
and $F$ is $k$-bounded, we have that $F_1$ is $k$-bounded. Since $x_{T'}$ is not an $(\varepsilon,d)$-mate of $x_T$ in $G'$, we have that 
$$\e(G')-\e(G/E(F_1)) \le \varepsilon d + 1 \le 2\varepsilon d,$$
where the last inequality follows since $d\ge 1/\varepsilon$. Yet 
$${\rm small}_k(F_1) = {\rm small}_k(F) - {\rm small}_k(T) - {\rm small}_k(T') + {\rm small}_k(T'').$$ 
Since ${\rm small}_k(T) > {\rm small}_k(T'')$ and ${\rm small}_k(T') \ge 0$, we have that
$${\rm small}_k(F_1) \le {\rm small}_k(F) - 1,$$
where the $-1$ follows since ${\rm small}_k$ is integral. It now follows that $F_1$ also satisfies (\ref{e:star}). Since $F_1$ is a $k$-bounded forest with $V(F_1)=A$ satisfying (\ref{e:star}) and ${\rm small}_k(F_1) < {\rm small}_k(F)$, we have that $F_1$ contradicts the choice of $F$.

So we may assume that $\v(T')> \frac{2k}{3}$. Since $u\notin Y$, we have by definition that $u$ is not a centroid of $T'$. Let $P$ be the peripheral piece of $T'$ containing $u$ and let $e$ be the central edge of $u$ in $T'$. Since $P$ is a peripheral piece, we have that $\v(P)\le \frac{\v(T')}{2} \le \frac{k}{2}$.

Let $F_2=F+uv-e$. Thus $T_1 := (T\cup P)+uv$ and $T_2 := T'-V(P)$ are components of $F_2$. Now $F_2$ is $k$-bounded since $\v(T_2) < \v(T') \le k$ and
$$\v(T_1) = \v(T)+\v(P) \le \frac{k}{3} + \frac{k}{2} \le k.$$ 
Since $x_{T'}$ is not an $(\varepsilon,d)$-mate of $x_T$ in $G'$, we have that $\e(G')-\e(G/E(F_2)) \le 2\varepsilon d$, as above. Note that $\v(T_1) = \v(T) + \v(P) > \v(T)$ and $\v(T_2) = \v(T) - \v(P) > \frac{\v(T')}{2} \ge \frac{k}{3}$. Yet 
$${\rm small}_k(F_2) = {\rm small}_k(F) - {\rm small}_k(T) - {\rm small}_k(T') + {\rm small}_k(T_1) + {\rm small}_k(T_2).$$ 
Since ${\rm small}_k(T) > {\rm small}_k(T_1)$ and ${\rm small}_k(T')={\rm small}_k(T_2)=0$, we have that
$${\rm small}_k(F_2) \le {\rm small}_k(F) - 1,$$
where the $-1$ follows since ${\rm small}_k$ is integral. But then $F_2$ also satisfies (\ref{e:star}). Since $F_2$ is a $k$-bounded forest with $V(F_2)=A$ satisfying (\ref{e:star}) and ${\rm small}_k(F_2) < {\rm small}_k(F)$, we have that $F_2$ contradicts the choice of $F$.
\end{proof}

We now return to the main proof.  First suppose that $|X| > \ell |Y|$. Let $H$ be the bipartite graph with $V(H)=X\cup Y$ and $E(H) = \{xy\in E(G): x\in X,~y\in Y\}$. Then  (ii) holds by Claim~\ref{XNeighbors}, a contradiction.

So we may assume that $|X| \le \ell |Y| \le \frac{5\ell}{k} \cdot \v(G)$. Note that $F$ has at most $\frac{3}{k} \cdot \v(G)$ components $T$ with $\v(T) \geq \frac{k}{3}$. Thus 
$$\v(G') \le |X| + \frac{3}{k}\cdot \v(G) + |B| \le \frac{5(\ell+1)}{k}\cdot \v(G) \le \frac{8\ell}{k}\cdot \v(G),$$
where the last inequality follows since $\ell \ge 2$. Recall that by construction,
$$\e(G)-\e(G') \le 2 \varepsilon d \left( k\cdot \v(G) - {\rm small}_k(F) \right) \le 2\varepsilon d k \cdot \v(G),$$
where the last inequality follows since ${\rm small}_k(F)\ge 0$. Since $\e(G) =d\cdot \v(G)$, it follows from the inequality above that
$$\e(G') \ge (1-2k\varepsilon)d \cdot \v(G).$$
Since $\v(G')\le \frac{8\ell}{k}\cdot \v(G)$, we have that
$$\d(G') \ge \frac{k}{8\ell} (1-2k\varepsilon) d,$$
and (iii) holds, a contradiction.
\end{proof}

\subsection{Combining the cases}\label{Combined}

We now derive \cref{DenseSubgraph2}, which we restate for convenience, from Theorems~\ref{kclawDense} and~\ref{DenseSubgraph3}. \Dense*

\begin{proof}
	
	We apply Theorem~\ref{DenseSubgraph3} to $G$. If Theorem~\ref{DenseSubgraph3}(i) holds, then (i) holds as desired. Similarly if Theorem~\ref{DenseSubgraph3}(iii) holds, then (iii) holds as desired. 
		
	So we may assume that Theorem~\ref{DenseSubgraph3}(ii) holds, that is there exists a bipartite subgraph $H=(X,Y)$ with $|X| > \ell |Y|$ such that  every vertex in $X$ has at least $(1-2k\varepsilon)d$ neighbors in $Y$.  We next apply Theorem~\ref{kclawDense} with $d_0= (1-2k\varepsilon)d$ and $\varepsilon_0 = 2\varepsilon$ to $H$. 
	
	First assume Theorem~\ref{kclawDense}(i) holds. That is, there exists a subgraph $H_0$ of $H$ with $\v(H_0) \le 3d_0 \le 3k^3d$ and $\e(H_0)\ge \varepsilon_0^2 d_0^2/2 = 4\varepsilon^2 (1-2k\varepsilon)^2 d^2/2.$ Since $2k\varepsilon\le 1/2$ as $\varepsilon \le \frac{1}{4k}$, we find that $\e(H_0)\ge \varepsilon^2 d^2/2$ and (i) holds as desired.
	
	So we may assume that Theorem~\ref{kclawDense}(ii) holds. That is, $H$ contains an $(\ell+1)$-bounded minor $H_0$ with 
	$$\d(H_0) \ge \frac{\ell}{2} (1-2\ell\varepsilon_0) d_0 \ge  \frac{\ell}{2}(1-4k\varepsilon)(1-2k\varepsilon) d \ge \frac{\ell}{2} (1-6k\varepsilon) d,$$ where the middle inequality uses the fact that $\ell \le k$. Hence (ii) holds with $G'=H_0$, as desired.
\end{proof}

\section{An application of \cref{t:newforced}}\label{s:turan}
For a pair of graphs  $G$ and $H$, we say that $G$  is \emph{$H$-free} if no subgraph of $G$ is isomorphic to $H$.
The next theorem due to K\"{u}hn and Osthus~\cite{KO05} shows that $H$-free graphs  have exceptionally dense minors for every complete bipartite graph $H$.  

\begin{thm}[\cite{KO05}]\label{t:KO}
For every integer $s \geq 2$, every $K_{s,s}$-free graph  $G$ has a minor $J$ with  
	\begin{equation}\label{e:KO1}
	\d(J) \geq  (\d(G))^{1 +\frac{1}{2(s-1)}-o_{\d(G)}(1)}.
	\end{equation}
\end{thm}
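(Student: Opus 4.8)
The plan is to derive \cref{t:KO} from \cref{t:newforced} together with the K\H{o}v\'ari--S\'os--Tur\'an theorem, which states that a $K_{s,s}$-free graph on $n$ vertices has density at most $c_s n^{1-1/s}$ for some constant $c_s=c_s(s)$. Since the conclusion \eqref{e:KO1} means that for every $\eps>0$ one has $\d(J)\ge \d(G)^{1+\frac{1}{2(s-1)}-\eps}$ once $\d(G)$ is large, it suffices to show: for every $\eps>0$ there is $d_0=d_0(s,\eps)$ such that every $K_{s,s}$-free graph $G$ with $\d(G)\ge d_0$ has a minor $J$ with $\d(J)\ge \d(G)^{1+\gamma}$, where $\gamma:=\frac{1}{2(s-1)}-\eps$. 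We may assume $0<\eps<\frac{1}{2(s-1)}$, so $0<\gamma<\frac{1}{2(s-1)}$.

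First I would fix the parameters. Since $\gamma<\frac{1}{2(s-1)}$ we have $1+2\gamma<\frac{s}{s-1}$ and hence $(1+2\gamma)(1-\tfrac1s)<1$; by continuity we may therefore choose $\delta=\delta(s,\eps)>0$ small enough that
\[
1-\gamma\delta \;>\; (1+2\gamma+\gamma\delta)\Bigl(1-\tfrac1s\Bigr).
\]
Set $C:=C_{\ref{t:newforced}}(\delta)$, write $d:=\d(G)$ (assumed large, in particular $\ge C$), put $D:=d^{1+\gamma}$, and observe that the quantity ``$s$'' $=D/\d(G)$ appearing in \cref{t:newforced} equals $d^{\gamma}$ (not to be confused with the parameter of $K_{s,s}$). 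Now apply \cref{t:newforced} to $G$ with this $D$. If outcome (i) holds, $G$ has a minor $J$ with $\d(J)\ge D=d^{1+\gamma}$ and we are done. Otherwise outcome (ii) yields a subgraph $H$ of $G$ with
\[
\v(H)\;\le\; C\,(d^{\gamma})^{\delta}\,\frac{D^{2}}{d}\;=\;C\,d^{\,1+2\gamma+\gamma\delta},
\qquad
\d(H)\;\ge\;\frac{(d^{\gamma})^{-\delta}\,d}{C}\;=\;\frac{d^{\,1-\gamma\delta}}{C}.
\]

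Since $\d(H)\ge d^{1-\gamma\delta}/C\to\infty$ as $d\to\infty$, the graph $H$ has arbitrarily many vertices; and $H$, being a subgraph of the $K_{s,s}$-free graph $G$, is itself $K_{s,s}$-free. Hence K\H{o}v\'ari--S\'os--Tur\'an gives $\d(H)\le c_s\,\v(H)^{1-1/s}\le c_s\,(C d^{\,1+2\gamma+\gamma\delta})^{1-1/s}$. Combining this with the lower bound on $\d(H)$ gives
\[
d^{\,1-\gamma\delta-(1+2\gamma+\gamma\delta)(1-1/s)}\;\le\; c_s\,C^{\,2-1/s}.
\]
By the choice of $\delta$ the exponent of $d$ on the left is strictly positive, so this inequality fails once $d\ge d_0(s,\eps)$ — a contradiction. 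Therefore outcome (i) must hold, which is what we wanted.

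The computation is entirely routine; the only points requiring care are the order of quantifiers (the constants $\delta$, $C$, $d_0$ are chosen after $\eps$ and $s$ but before $G$) and the remark that $\v(H)\to\infty$, which makes the implied constant $c_s$ harmless. I do not anticipate a genuine obstacle: the content of \cref{t:newforced} is exactly a trade-off converting ``no dense minor'' into ``a not-too-large dense subgraph,'' and for $K_{s,s}$-free graphs the Tur\'an-type density bound forced on that subgraph by its limited size is precisely what the guaranteed density $d^{1-\gamma\delta}$ contradicts. (The same template, with the K\H{o}v\'ari--S\'os--Tur\'an bound replaced by the relevant extremal bound, is what one would use for the $C_{2k}$-free result of Krivelevich--Sudakov and for the general bipartite statement the paper goes on to prove.)
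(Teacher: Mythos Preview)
Your proposal is correct, and it is essentially the same argument the paper uses. Note that the paper does not give a separate proof of \cref{t:KO}: it is quoted from~\cite{KO05}, and the paper instead proves the general \cref{t:bip} (via \cref{t:newforced} plus the Tur\'an exponent bound) and then observes that \cref{t:KO} follows from \cref{t:bip} together with $\gamma(K_{s,s})\le 2-1/s$. Your argument is exactly the specialization of the proof of \cref{t:bip} to $H=K_{s,s}$, with the K\H{o}v\'ari--S\'os--Tur\'an inequality playing the role of the bound $\e(G')\le \v(G')^{\gamma+\delta}$; as you yourself remark in the last paragraph, this is the same template.
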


Krivelevich and Sudakov~\cite{KriSud09} tightened (\ref{e:KO1}) to $\d(J) \geq  c_s(\d(G))^{1 +\frac{1}{s-1}}$ for some $c_s>0$ independent of $\d(G)$. They also proved the following, strengthening a result of K\"{u}hn and Osthus~\cite{KO03}. 

\begin{thm}[\cite{KriSud09}]\label{t:KS}
	For every integer $k \geq 2
	$ there exists $c_k>0$ such that every $C_{2k}$-free $G$ has a minor $J$ with  
	$$
	\d(J) \geq  c_{k}(\d(G))^{\frac{k+1}{2}}.
$$
\end{thm}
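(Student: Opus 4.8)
The plan is to deduce Theorem~\ref{t:KS} from \cref{t:newforced} in essentially the same way one would hope to deduce a Turán-type minor bound from a degeneracy statement, using the classical fact (Bondy–Simonovits) that a $C_{2k}$-free graph has $O(n^{1+1/k})$ edges. Concretely, suppose $G$ is $C_{2k}$-free with $d = \d(G)$ large (the bounded case is absorbed into the constant $c_k$). Set $D = d^{(k+1)/2}$ and apply \cref{t:newforced} with a parameter $\delta > 0$ to be chosen small. If outcome (i) occurs, we are immediately done with $\d(J) \geq D = d^{(k+1)/2}$. Otherwise outcome (ii) gives a subgraph $H$ of $G$ with
$$\v(H) \leq C s^{\delta} D^2/d \qquad \text{and} \qquad \d(H) \geq s^{-\delta} d / C,$$
where $s = D/d = d^{(k-1)/2}$.

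The key step is then to observe that $H$, being a subgraph of $G$, is also $C_{2k}$-free, so by the Bondy–Simonovits bound $\e(H) \leq c\, \v(H)^{1+1/k}$ for an absolute constant $c = c(k)$. Combining this with $\e(H) = \d(H)\v(H)$ gives $\d(H) \leq c\, \v(H)^{1/k}$, hence $\v(H) \geq (\d(H)/c)^k$. Plugging in the two bounds from outcome (ii): on one hand $\v(H) \geq (s^{-\delta} d/(Cc))^k = \Omega\!\left(s^{-k\delta} d^k\right)$, while on the other hand $\v(H) \leq C s^{\delta} D^2/d = C s^{\delta} d^{k}$ since $D^2/d = d^{k+1}/d = d^k$. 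So we get $\Omega(s^{-k\delta} d^k) \leq C s^{\delta} d^k$, i.e. $s^{(k+1)\delta} = \Omega(1)$ after cancelling $d^k$. Since $s = d^{(k-1)/2} \to \infty$, this is a contradiction once $\delta > 0$ is fixed and $d$ is large enough: outcome (ii) cannot occur. Therefore outcome (i) holds and $G$ has a minor $J$ with $\d(J) \geq D = d^{(k+1)/2}$, which is the desired bound (with $c_k = 1$ in the large-$d$ regime; a suitable $c_k$ then handles all $d$).

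I expect the main obstacle to be purely bookkeeping: making sure the threshold $C_{\ref{t:newforced}}(\delta)$ on $\d(G)$ imposed by \cref{t:newforced}, the requirement that $d$ be large enough to beat the constant in $s^{(k+1)\delta} = \Omega(1)$, and the passage from the ``large $d$'' conclusion to a clean statement valid for all $C_{2k}$-free graphs (by shrinking $c_k$) are all compatible. One should also double-check the exact form of the Bondy–Simonovits bound being invoked — the statement $\e(H) \le c_k \v(H)^{1+1/k}$ for $C_{2k}$-free graphs is standard, but its constant depends on $k$, which is harmless here since $k$ is fixed. A minor subtlety is that \cref{t:newforced} requires $\d(G) \ge C_{\ref{t:newforced}}(\delta)$ and uses $s = D/\d(G) > 1$; both hold in our regime since $D = d^{(k+1)/2} \gg d$ for $k \ge 2$. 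No genuinely new idea beyond "forbidden subgraph $\Rightarrow$ few edges $\Rightarrow$ the small dense subgraph of outcome (ii) would be too large to be sparse" seems to be needed; the same scheme, with $\v(H) \ge (\d(H)/c_s)^{(s-1)/(s-2)}$-type bounds from the Kővári–Sós–Turán theorem, reproves the $K_{s,s}$-free statement strengthening Theorem~\ref{t:KO}.
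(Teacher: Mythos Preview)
There is a genuine gap at the ``contradiction'' step. From the lower bound $\v(H) \ge (s^{-\delta} d/(Cc))^k$ and the upper bound $\v(H) \le C s^{\delta} D^2/d = C s^{\delta} d^k$ you correctly obtain $s^{(k+1)\delta} \ge 1/(C^{k+1}c^k)$, i.e.\ $s^{(k+1)\delta} = \Omega(1)$. But this is \emph{not} a contradiction: for $\delta>0$ and $s = d^{(k-1)/2} \to \infty$ the left side tends to infinity, so the inequality is trivially satisfied. A contradiction would require the lower bound on $\v(H)$ to \emph{exceed} the upper bound, but with $D = d^{(k+1)/2}$ both sides are of order $d^k$ up to $s^{\pm O(\delta)}$ factors, and the inequality goes the harmless way.

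If instead you take $D = d^{(k+1)/2 - \varepsilon}$ for a small fixed $\varepsilon>0$, then $D^2/d = d^{k-2\varepsilon}$ and the same computation does give a contradiction for $\delta$ small in terms of $\varepsilon$; this is exactly the paper's proof of \cref{t:bip}, specialised to $H = C_{2k}$. But then the conclusion is only $\d(J) \ge d^{(k+1)/2 - \varepsilon}$, i.e.\ the bound of \cref{t:bip}, not the sharper constant-free exponent of \cref{t:KS}. The paper itself notes this: ``\cref{t:bip} extends Theorems~\ref{t:KO} and~\ref{t:KS}, although the error term in \cref{t:KS} is better controlled.'' In other words, the $s^{\pm\delta}$ slack in \cref{t:newforced} is intrinsic, and no choice of $\delta>0$ removes it; the route through \cref{t:newforced} cannot recover the full strength of \cref{t:KS}. (The paper does not prove \cref{t:KS} at all --- it is quoted from Krivelevich--Sudakov, whose argument is different and more direct.)
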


The exponents appearing in Theorems \ref{t:KO} and \ref{t:KS} can not be improved, subject to well known conjectures on the Tur\'{a}n numbers of $K_{s,s}$ and $C_{2k}$, which we mention below. 

In this section we use \cref{t:newforced} to extend  Theorems \ref{t:KO} and \ref{t:KS} to general bipartite graphs. Stating our result requires  a couple of definitions. The \emph{Tur\'{a}n number} $\ex(n,H)$ {of a graph $H$} with $\e(H) \neq 0$  is the maximum number of edges in an $H$-free graph $G$ with  $\v(G)=n$. The \emph{Tur\'{a}n exponent} $\gamma(H)$ of a graph $H$ with $\e(H) \geq 2$ is defined as
$$\gamma(H):= \limsup_{n \to \infty} \frac{\log \ex(n,H)}{\log n}.$$
Many fundamental questions about Tur\'{a}n exponents of bipartite graphs remain open. In particular,  a famous conjecture of Erd\H{o}s and Simonovits (see \cite[Conjecture 1.6]{FurSim13}) states that $\gamma(H)$ is rational for every graph $H$, and that $\lim_{n \to \infty} \ex(n,H)/n^{\gamma(H)}$ exists and is positive. We refer the reader to a comprehensive survey by F\"{u}redi and  Simonovits~\cite{FurSim13} for further background. 
 
The main result of this section is an essentially tight analogue of Theorems \ref{t:KO} and \ref{t:KS} for $H$-free graphs $G$ for general bipartite $H$. 

\begin{thm}\label{t:bip}
	For every bipartite graph  $H
	$ with  $\gamma(H) > 1$, every $H$-free  graph $G$ has a minor $J$ with  
	$$
	\d(J) \geq  (\d(G))^{\frac{\gamma(H)}{2(\gamma(H)-1)}-o_{\d(G)}(1)}.
	$$
\end{thm}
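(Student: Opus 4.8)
The plan is to recast the statement in $\eps$--$\delta$ form. Write $\gamma := \gamma(H)$ and $\alpha := \frac{\gamma}{2(\gamma-1)}$; it suffices to show that for every $\eta > 0$ there is $d_0 = d_0(H,\eta)$ such that every $H$-free graph $G$ with $\d(G) \ge d_0$ has a minor $J$ with $\d(J) \ge \d(G)^{\alpha - \eta}$ (this is exactly the asserted bound with an appropriate $o_{\d(G)}(1)$). Since $H$ is bipartite with $\e(H) \ge 2$, it is a subgraph of some $K_{a,b}$, and the K\H{o}v\'ari--S\'os--Tur\'an bound gives $\gamma \le 2 - 1/a < 2$; with the hypothesis $\gamma > 1$ this forces $\alpha > 1$, so we may assume $0 < \eta < \alpha - 1$. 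The identity driving the whole computation is
\[ 2\alpha - 1 \;=\; \frac{\gamma}{\gamma - 1} - 1 \;=\; \frac{1}{\gamma - 1}. \]

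Fix such an $\eta$ and let $G$ be $H$-free with $d := \d(G)$ large. I would choose auxiliary constants $\delta, \eps \in (0,1)$, small in terms of $\eta$ and $\gamma$ (fixed at the end), set $C := C_{\ref{t:newforced}}(\delta)$, and apply \cref{t:newforced} to $G$ with $D := d^{\alpha - \eta}$, so that $s := D/\d(G) = d^{\alpha - 1 - \eta} > 1$; this is legitimate once $d \ge d_0 \ge C$. If outcome (i) occurs, $G$ has a minor of density at least $D = \d(G)^{\alpha - \eta}$ and we are done. So suppose outcome (ii) holds, yielding a subgraph $G_0 \subseteq G$ with $\v(G_0) \le C s^{\delta} D^2/d$ and $\d(G_0) \ge s^{-\delta} d / C$. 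Substituting $s = d^{\alpha-1-\eta}$ and using $2\alpha - 1 = 1/(\gamma-1)$, these become $\v(G_0) \le C\, d^{\frac{1}{\gamma-1} - 2\eta + \delta(\alpha - 1 - \eta)}$ and $\d(G_0) \ge d^{\,1 - \delta(\alpha - 1 - \eta)}/C$. In particular $\v(G_0) > \d(G_0) \to \infty$, so for $d$ large enough $\v(G_0)$ exceeds the (finite, $H$- and $\eps$-dependent) threshold past which $\ex(n,H) \le n^{\gamma + \eps}$; such a threshold exists because $\gamma(H)$ is defined as a $\limsup$.

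Now $G_0$, being a subgraph of the $H$-free graph $G$, is itself $H$-free, so $\d(G_0) = \e(G_0)/\v(G_0) \le \v(G_0)^{\gamma - 1 + \eps}$. Feeding the two displayed bounds into this inequality, outcome (ii) would force
\[ \frac{d^{\,1 - \delta(\alpha-1-\eta)}}{C} \;\le\; \left( C\, d^{\frac{1}{\gamma-1} - 2\eta + \delta(\alpha-1-\eta)} \right)^{\gamma - 1 + \eps}. \]
Taking $\log_d$ and using $2\alpha-1 = 1/(\gamma-1)$ once more, the exponents must satisfy
\[ 2\eta(\gamma - 1 + \eps) \;\le\; \frac{\eps}{\gamma - 1} \;+\; \delta(\alpha - 1 - \eta)(\gamma + \eps) \;+\; \frac{(\gamma+\eps)\log C}{\log d}. \]
The left-hand side is a fixed positive number (at least $2\eta(\gamma-1)$), whereas the right-hand side can be made strictly smaller by first choosing $\eps$ small (making the first term $< \eta(\gamma-1)$, e.g.\ $\eps < \eta(\gamma-1)^2$), then $\delta$ small (making the second term $< \eta(\gamma-1)$, e.g.\ $\delta < \eta(\gamma-1)/((\alpha-1)(\gamma+1))$, which also fixes $C$), and finally $d_0$ large. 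This contradiction rules out outcome (ii), so outcome (i) holds and the proof is complete.

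The step I expect to be the crux is precisely this bookkeeping. With $\eta = \eps = \delta = 0$ the displayed exponent inequality is an exact equality — which is why $\frac{\gamma}{2(\gamma-1)}$ is the right exponent and why the result is ``essentially tight'' — so the argument survives only because the slightly conservative choice $D = d^{\alpha-\eta}$ injects the genuinely negative term $-2\eta(\gamma-1)$, which must be shown to dominate all of the $\eps$-, $\delta$-, and $\log C/\log d$-errors simultaneously. A secondary but essential point is that one must check $\v(G_0)$ is large \emph{in absolute terms} (not merely bounded above by the quantity supplied by \cref{t:newforced}) before invoking the asymptotic Tur\'an estimate $\ex(n,H) \le n^{\gamma+\eps}$; this is handled by the density-to-order bound $\v(G_0) > \d(G_0) \to \infty$.
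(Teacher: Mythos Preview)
Your proposal is correct and follows essentially the same approach as the paper's proof: apply \cref{t:newforced} with target $D = d^{\alpha-\eta}$, rule out outcome (ii) by feeding the resulting small dense subgraph into the asymptotic Tur\'an bound $\e(G_0) \le \v(G_0)^{\gamma+\eps}$, and check that the exponent bookkeeping forces a contradiction for suitable small parameters and large $d_0$. The only cosmetic differences are that the paper uses a single slack parameter $\delta$ to play both roles (your $\delta$ and your $\eps$), and it does not explicitly invoke K\H{o}v\'ari--S\'os--Tur\'an to verify $\alpha>1$, instead absorbing the trivial case implicitly.
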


\begin{proof}
	The theorem follows from \cref{t:newforced} via a routine, if not exceptionally short, calculation. Let $H$ be as in the theorem statement, and let $\gamma=\gamma(H)$. We need to show that for  every $\eps > 0$, there exists 
	$d_0=d_0(\eps,H)>0$ such that every $H$-free  graph $G$ with $d(G) \geq d_0$ has a minor $J$  with  
	$$
	\d(J) \geq  (\d(G))^{\frac{\gamma}{2(\gamma-1)}-\eps}.
	$$
	Let $\delta $ be chosen so that $\left(\frac{\gamma}{2(\gamma-1)} -\varepsilon \right)\delta \leq \frac{1}{2}$ and \begin{equation}\label{e:delta}\frac{\gamma+\delta}{(2+\delta)(\gamma-1+\delta) + \delta} > \frac{\gamma}{2(\gamma-1)}-\eps,\end{equation} and let $C=C_{\ref{t:newforced}}(\delta)$ be as in \cref{t:newforced}. Let $d_0 \geq C^{1/\delta}$ be chosen so that every $H$-free  graph $G'$ with $v(H) \geq (d_0)^{1/2}$ satisfies  $ \e(G') \leq (\v(G'))^{\gamma+\delta}$. Such a choice is possible by definition of the Tur\'an exponent $\gamma(H)$.
	
	Let $G$ be an $H$-free  graph with $d:=d(G) \geq d_0$, and let  $D:=d^{\frac{\gamma}{2(\gamma-1)}-\eps}$. We assume for a contradiction that the density of every minor of $G$ is less than $D$. Then by \cref{t:newforced} there exists a subgraph $G'$ with \begin{equation}
	\label{e:vgprime}
		\v(G') \leq (D/d)^{\delta} CD^2/d \leq D^{2+\delta}/d,
	\end{equation} and \begin{equation}
	\label{e:dgprime}\d(G') \geq (D/d)^{-\delta}d/C \geq d/D^{\delta}.\end{equation} By the choice of $d_0$, we have that $$\v(G') \geq \d(G') \geq d^{1-\delta\left(\frac{\gamma}{2(\gamma-1)} - \varepsilon\right)} \geq (d_0)^{1/2}.$$
	It follows from the choice of $d_0$ and the fact that $G'$ is $H$-free that $$\d(G')=\frac{\e(G')}{\v(G')} \leq (\v(G'))^{\gamma -1 + \delta}.$$
	Substituting (\ref{e:vgprime}) and (\ref{e:dgprime}) in the above we obtain
	$$
	\frac{d}{D^{\delta}} \leq  \left( \frac{D^{2+\delta}}{d} \right)^{\gamma-1+\delta},  
	$$
	implying
	$$
	D \geq d^{\frac{\gamma+\delta}{(2+\delta)(\gamma-1+\delta) + \delta}} \stackrel{(\ref{e:delta})}{>} d^{\frac{\gamma}{2(\gamma-1)}-\eps} = D,
	$$
	the desired contradiction.
\end{proof}

By definition of the Tur\'an exponent, for every graph $H$  there exists a family of $H$-free graphs $\{G_n\}_{n=1}^{\infty}$ with $\v(G_n) \to \infty$ and $\d(G_n) \geq \v(G_n)^{\gamma(H)-1 -o(1)}$. For every minor $J$ of a graph $G_n$ we have $$\d^2(J) \leq \e(J) \leq \e(G_n) = \v(G_n)\d(G_n) \leq \d(G_n)^{\frac{\gamma(H)}{\gamma(H)-1} +o(1)},$$ assuming $\gamma(H)>1$.
Thus \cref{t:bip} is tight up to the $o_{\d(G)}(1)$ term, as claimed above. 

It has been shown by K\H{o}vari, S\'os and Tur\'an~\cite{KST54} that $\gamma(K_{s,s}) \leq 2- \frac{1}{s}$, and by Erd\H{o}s (see~\cite[Theorem 4.6]{FurSim13}), and Bondy and Simonovits~\cite{BS74} that $\gamma(C_{2k}) \leq (k+1)/k$ . Thus \cref{t:bip} extends Theorems \ref{t:KO} and \ref{t:KS}, although the error term in \cref{t:KS} is better controlled. Note that the tightness of Theorems \ref{t:KO} and \ref{t:KS}, unlike that of \cref{t:bip}, hinges on tightness of the above inequalities on $\gamma(K_{s,s})$  and $\gamma(C_{2k})$, which is widely believed, but is in general open.

\cref{t:bip} does not apply to bipartite graphs  $H
$ with  $\gamma(H)  \leq 1$. However, if $\gamma(H)  \leq 1$ then $H$ is a forest (see~\cite[Corollary 2.28]{FurSim13}). It is not hard to show that   for every forest $H$ with $\v(H) \geq 2$ and every $H$-free graph $G$, we have $\d(G) \leq \v(H)-2$, and so the density of $H$-free graphs is bounded for such $H$. (The exact bound is the subject of the famous Erd\H{o}s-S\'os conjecture~\cite{Erd53}, see also~\cite[Conjecture 6.1]{FurSim13}.) Thus there are no meaningful extensions of asymptotic results such as \cref{t:bip} to this case.

\section{Concluding remarks}\label{s:remarks}

\subsubsection*{Further improvements.}

Further improving the bounds obtained in this paper would require improving or replacing \cref{t:minorfrompieces}, which encapsulates our current procedure for obtaining a $K_t$ minor by linking several smaller pieces.

Answering the following question would help determine the limits of the current approach.

\begin{que}\label{q:1} Does there exist $C > 0$ such that for every integer $t \geq 1$ the following holds?
	
If $G$ is a graph and $H_1,H_2,\ldots,H_r$ are vertex-disjoint subgraphs of $V(G)$ for some $r \geq (\log t)^C$, $\kappa(G) \geq Ct$  and $\kappa(H_i) \geq Ct$ for every $i \in [r]$, then $G$ has a $K_t$ minor.
\end{que}

Note that B\"ohme et al.~\cite{BKMM09} have shown that for every integer $t \geq 1$ there exists $N(t)$ such that every graph $G$ with $\kappa(G) \geq 31(t+1)/2$ and $\v(G) \geq N(t)$  has a $K_t$ minor. Their result implies that if we replace the requirement $r \geq (\log t)^C$ in \cref{q:1} by $r \geq N(t)$, then the modified question has a positive answer.

\subsubsection*{Odd minors.} Given graphs $G$ and $H$ we say that $G$ has \emph{an odd $H$ minor} if a graph isomorphic to $H$ can be obtained from a subgraph $G'$ of $G$ by contracting a set of edges forming a cut in $G'$. Gerards and Seymour (see \cite[p. 115]{JenToft95}) conjectured the following strengthening of Hadwiger's conjecture.

\begin{conj}[Odd Hadwiger's Conjecture]
	\label{odd} 
	For every integer  $t \geq 1$, every graph 
	with no odd  $K_{t}$ minor  is  $(t-1)$-colorable. 
\end{conj}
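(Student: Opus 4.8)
The plan is an induction on $\v(G)$: assuming the conjecture holds for all graphs on fewer vertices, take $G$ to be a vertex-minimal counterexample, so that $G$ has no odd $K_t$ minor, $\chi(G)=t$, and every graph on fewer vertices with no odd $K_t$ minor is $(t-1)$-colorable. First I would carry out the standard reductions that make $G$ highly structured, with one caveat: deleting a vertex or contracting an edge need not preserve the absence of an odd $K_t$ minor, so the reductions must be performed relative to a fixed $2$-coloring of $G$ witnessing odd-$K_t$-minor-freeness, in the spirit of Geelen--Gerards--Reed--Seymour--Vetta (branch sets bipartite, cross edges parity-respecting). With that care, $G$ has minimum degree at least $t-1$, has no small parity-respecting clique cutset, and --- adapting the argument of Kawarabayashi behind \cref{t:minconnectivity} --- satisfies $\kappa(G)=\Omega(t)$.

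The argument then splits at an absolute constant $t_0$. For $t\le t_0$ I would invoke the rough structure theorem of Geelen, Gerards, Reed, Seymour and Vetta for graphs with no odd $K_t$ minor: every such graph is a clique-sum, along cliques of bounded size, of pieces that become embeddable in a surface of bounded genus with boundedly many vortices of bounded depth after deleting a bounded apex set, subject to a global parity constraint on the embedding. In a minimal counterexample the cuts, apices and vortices must all degenerate --- each would either split the coloring problem, using minimality, or contradict $\chi$-criticality --- leaving an essentially planar signed graph on which the parity constraint, for the fixed small $t$, reduces $(t-1)$-colorability to the Four Color Theorem and its refinements for signed and list coloring. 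This is exactly how the cases $t\le 5$ are already known.

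For $t>t_0$ --- the substantive part --- I would transplant the machinery of this paper. The first step is an odd analogue of \cref{t:newforced}: a sufficiently dense graph with no odd $K_t$ minor contains a small subgraph that is still dense. The density-increment argument of Section~\ref{s:force} is largely parity-agnostic, and the bounded minors it produces can be made parity-controlled inside highly connected pieces; iterating it inside the $\Omega(t)$-connected graph $G$ produces highly connected dense subgraphs $H_1,\ldots,H_r$. An odd version of \cref{t:minorfrompieces}, obtained by replacing ordinary linkages throughout Section~\ref{s:build} by linkages of prescribed parity --- available whenever the connectivity is a large multiple of $t$, and hence harmlessly absorbed into $C_{\ref{l:rooted}}$ and $C_{\ref{t:knitted}}$ --- then assembles an odd $K_t$ minor. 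This already delivers $\chi(G)=O(t(\log t)^{1/4+o(1)})$ for odd-$K_t$-minor-free $G$. To descend all the way to $t-1$ one must use the argument in its stability form: a $(t-1)$-chromatic-critical such $G$ would force near-equality in the density-increment inequality at every iteration, and I would prove that the near-extremal configurations are rigid --- essentially a bounded number of near-cliques glued along small parity-respecting separators --- and color those directly.

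The step I expect to be the main obstacle is precisely this last one, and I do not expect it to be easy. No one knows how to reach the exact bound $t-1$ even for ordinary Hadwiger once $t\ge 7$, and odd Hadwiger is strictly harder, since ``no odd $K_t$ minor'' is a weaker hypothesis than ``no $K_t$ minor'' and hence \cref{odd} implies \cref{Hadwiger}. Concretely, the gap is that the available structure theorem for odd $K_t$ minors is only \emph{rough} --- it tolerates error apices, vortices of positive depth, and uncontrolled clique-sum structure --- whereas the density-increment route bottoms out at bounds of the form $Ct$ (Linear Hadwiger, \cref{c:LinHadwiger}) or $O(t(\log t)^{1/4+o(1)})$, never exactly $t-1$. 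So the genuinely hard input is a \emph{precise} structure theorem for graphs with no odd $K_t$ minor, valid for all $t$; everything else in the plan --- the parity-aware reductions, the parity-controlled linkages, and the odd analogues of Theorems~\ref{t:newforced} and~\ref{t:minorfrompieces} --- should be routine-to-moderate by comparison.
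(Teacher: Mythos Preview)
The statement you are attempting to prove is \cref{odd}, the Odd Hadwiger's Conjecture. The paper does not prove it and contains no proof to compare your proposal against: it is stated in \cref{s:remarks} precisely as an open conjecture of Gerards and Seymour, and the paper only reports (\cref{t:oddmain}) that elsewhere two of the authors obtain the bound $O(t(\log t)^{\beta})$ for $\beta>1/4$. Since \cref{odd} implies \cref{Hadwiger}, which is open for all $t\ge 7$, no complete proof of \cref{odd} is currently known.

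You in fact acknowledge this yourself. Your proposal is honest about where it breaks: the ``stability'' step that would descend from an $O(t(\log t)^{1/4+o(1)})$ bound to the exact value $t-1$, and the ``precise structure theorem for graphs with no odd $K_t$ minor, valid for all $t$,'' are not routine inputs --- they are exactly the missing pieces that would constitute a proof of Hadwiger's conjecture and more. The density-increment machinery of this paper, even in a parity-aware form, cannot on its own reach $t-1$; indeed the paper is explicit that its methods give $O(t(\log t)^{\beta})$ and that even Linear Hadwiger (\cref{c:LinHadwiger}) remains open. So the gap is not a technical detail to be filled in but the entire content of a famous open problem. A further specific issue: the structure theorem you attribute to Geelen--Gerards--Reed--Seymour--Vetta is not what that paper proves; \cite{GGRSV08} establishes the $O(t\sqrt{\log t})$ chromatic bound for odd-$K_t$-minor-free graphs, not a surface/vortex/apex decomposition, so even your ``small $t$'' case rests on an ingredient that is not available in the form you need.
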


Geelen, Gerards, Reed, Seymour and Vetta~\cite{GGRSV08}  used \cref{t:KT} to show that every graph with no odd $K_t$ minor is $O(t\sqrt{\log{t}})$-colorable. In~\cite{NorSong19Odd} two of us strengthen \cref{t:main} to show the following.

\begin{thm}[\cite{NorSong19Odd}]\label{t:oddmain} For every $\beta > \frac 1 4 $,
	every graph with no odd $K_t$ minor is $O(t (\log  t)^{\beta})$-colorable.
\end{thm}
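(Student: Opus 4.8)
The plan is to retrace the proof of \cref{t:main}, promoting ``minor'' to ``odd minor'' at each step; what this costs is an odd analogue of each of the four ingredients used there. Throughout, an \emph{odd $K_s$ model} in a graph will be a family of disjoint branch sets together with a global $2$-coloring of their union such that each branch set is spanned by a connected subgraph all of whose edges are bichromatic and each pair of branch sets is joined by a bichromatic edge. Fix a vertex-minimal counterexample $G$ to \cref{t:oddmain}; then $G$ is \emph{odd-contraction-critical}, i.e.\ $\chi(H)<\chi(G)$ for every proper odd minor $H$ of $G$. The first ingredient needed is the odd analogue of \cref{t:minconnectivity}: an odd-contraction-critical graph with $\chi(G)\ge k$ has $\kappa(G)=\Omega(k)$. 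This can be extracted from the structure theorem of Geelen, Gerards, Reed, Seymour and Vetta~\cite{GGRSV08} together with the usual clique-cutset gluing, and gives $\kappa(G)=\Omega(t(\log t)^{1/4})$ just as in the non-odd case. The second ingredient is the odd analogue of \cref{c:big}: every graph with no odd $K_t$ minor has a set $X$ with $|X|\ge \v(G)/2$ and $\chi(G[X])=O(t)$ (odd analogues of the Duchet--Meyniel and Woodall theorems; the relevant estimates follow from the methods of~\cite{GGRSV08}); feeding this into the computation of \cref{c:big} gives $\chi(G)=O(t\log(\v(G)/t))$ and lets us assume $\v(G)$ is large.

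The density-increment half goes through with little change, because the argument of \cref{s:force} barely interacts with colorings: it manipulates $k$-bounded forests and contracts them, and a forest can always be properly $2$-colored. Concretely, one replaces ``$k$-bounded minor'' everywhere by ``$k$-bounded odd minor'' -- a minor obtained by contracting a $k$-bounded forest all of whose edges are bichromatic with respect to a fixed global $2$-coloring -- and reruns the results of \cref{s:force}; the mate and $k$-smallness bookkeeping in the proofs of \cref{kclawDense} and \cref{DenseSubgraph3} is untouched, the only new point being that each time an edge is added to or deleted from the current forest the global $2$-coloring is updated along the affected tree, which is always possible. This produces odd analogues of \cref{DenseSubgraph2} and \cref{t:newforced}, so dense subgraphs of $G$ can be extracted one by one exactly as before, the sparse remainder being colored by degeneracy and the small remainder by the odd analogue of \cref{c:big}. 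One genuinely non-formal point intrudes here: ``a minor of density $\Omega(t\sqrt{\log t})$ yields a $K_t$ minor'' (\cref{t:density}) has no odd counterpart, since blow-ups of bipartite graphs are dense yet have no odd $K_3$ minor. The fix is to use, in place of \cref{t:density}, an odd analogue in the spirit of~\cite{GGRSV08} -- a highly connected graph of large enough density in which no deletion of $O(t)$ vertices leaves a bipartite graph has an odd $K_t$ minor -- which yields a dichotomy for each extracted dense highly connected piece (obtained via \cref{l:connect}): either it is \emph{near-bipartite}, i.e.\ bipartite after deleting $O(t)$ vertices and hence of chromatic number $O(t)$, in which case it is folded into the cheaply colored part, or it is not, in which case it is kept. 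One must check that near-bipartite pieces cannot pile up enough to beat the target bound; this is where odd-contraction-criticality does its work, much as in~\cite{GGRSV08}.

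The technical heart, and the step I expect to be the main obstacle, is the odd analogue of \cref{t:minorfrompieces}: given a highly connected $G$ and $r=\Omega(\sqrt{\log t})$ pairwise disjoint, non-near-bipartite, dense -- hence, by \cref{l:connect}, highly connected -- subgraphs $H_1,\dots,H_r$, build an \emph{odd} $K_t$ minor. The scaffolding of \cref{t:minorfrompieces} survives: split $xy\ge t$, designate a hub piece $H_0$, route $x$-path bundles between each $H_{\{i,j\}}$ and $H_0$ via \cref{t:knitted}, and in each $H_{\{i,j\}}$ find an odd $K_{2x}$ model rooted at the bundle endpoints using a parity-aware refinement of \cref{l:rooted} (built, as before, on \cref{l:denseminor2} and \cref{t:knitted}, and on the odd analogue of \cref{t:density} applied inside $H_{\{i,j\}}$, whose applicability is precisely the non-near-bipartiteness of $H_{\{i,j\}}$). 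What has to be added is a layer of parity control: every branch set, rooting path, and linking path carries a $2$-coloring, the colors demanded at the two ends of each linking path are dictated by the two models it joins, and a path can realize a prescribed pair of end-colors only if its length has the correct parity. So one needs enough slack -- short detours exploiting the high connectivity of the pieces, or the freedom to realize a given root in either color class -- to repair every parity mismatch, and one must verify that the odd $K_{xy}$ model finally assembled in $H_0$ admits a single globally consistent $2$-coloring. Threading this bookkeeping through the Bollob\'as--Thomason linking machinery while keeping every connectivity requirement at $O(t(\log t)^{1/4})$ is the crux; once it is done, the deduction of \cref{t:oddmain} from the odd analogues of \cref{t:newforced}, \cref{t:minorfrompieces}, \cref{t:minconnectivity}, \cref{c:big} and \cref{t:density} is word for word that of \cref{t:main}.
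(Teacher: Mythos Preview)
Your overall plan matches the brief sketch the paper gives (the theorem is proved in~\cite{NorSong19Odd}; the present paper only outlines the strategy): the paper says ``\cref{t:newforced} can be used as is, while \cref{c:big} and Theorems~\ref{t:minorfrompieces} and~\ref{t:density} are replaced with more technical versions'', and your identification of odd analogues of \cref{c:big}, \cref{t:minorfrompieces}, \cref{t:density} (and \cref{t:minconnectivity}), your discussion of the near-bipartite dichotomy replacing \cref{t:density}, and your outline of the parity bookkeeping needed in the odd \cref{t:minorfrompieces} are all on target.

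The one substantive divergence is your proposal to thread a global $2$-coloring through \cref{s:force} so as to manufacture ``odd'' versions of \cref{DenseSubgraph2} and \cref{t:newforced}. The paper explicitly says this is not needed, and indeed it buys you nothing: outcome~(ii) of \cref{t:newforced} already returns a \emph{subgraph}, so extracting the pieces $H_i$ involves no contraction and hence no parity; and in outcome~(i) you must anyway invoke the odd replacement for \cref{t:density} to reach a contradiction, since high density alone never forces an odd clique minor --- whether the dense $J$ is an ordinary or an odd minor of $G$ is therefore irrelevant. Your justification is also incomplete: contracting a forest $F$ yields an odd minor only after fixing a $2$-coloring of its vertices and discarding the bichromatic non-forest edges, so each step of the iteration deriving \cref{t:newforced} from \cref{DenseSubgraph2} would incur an edge loss that your phrase ``the global $2$-coloring is updated along the affected tree, which is always possible'' does not address. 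Drop this layer and use the density-increment machinery verbatim, as the paper indicates.
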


The proof of \cref{t:oddmain} follows the same strategy as the proof of \cref{t:main}. \cref{t:newforced} can be used as is, while \cref{c:big} and Theorems~\ref{t:minorfrompieces} and~\ref{t:density} are replaced with more technical versions.

\subsubsection*{List coloring.}

A graph $G$ is said to be \emph{$k$-list colorable} if for every assignment of lists $\{L(v)\}_{v \in V(G)}$ to vertices of $G$ such that $|L(v)| \geq k$ for every $v \in V(G)$, there is a choice of colors $\{c(v)\}_{v \in V(G)}$ such that $c(v) \in L(v)$, and $c(v) \neq c(u)$ for every $uv \in E(G)$. 
Clearly every $k$-list colorable graph is $k$-colorable, but the converse implication does not hold. 
Voigt~\cite{Voigt93} has shown that there exist planar graphs which are not $4$-list colorable. Generalizing the result of~\cite{Voigt93}, Bar\'{a}t, Joret and Wood~\cite{BJW11} constructed graphs with no $K_{3t+2}$ minor which are not $4t$-list colorable for every $t \geq 1$. These results leave open the possibility that the Linear Hadwiger's Conjecture holds for list coloring, as conjectured by Kawarabayashi and Mohar~\cite{KawMoh07}.

\begin{conj}[\cite{KawMoh07}]\label{c:ListHadwiger} There exists $C>0$ such that for every integer $t \geq 1$, every graph with no $K_{t}$ minor is $Ct$-list colorable. 
\end{conj}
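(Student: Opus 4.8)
The plan is to transplant the proof of \cref{t:main} into the list-coloring setting. The entire minor-finding backbone of that proof is insensitive to the distinction between coloring and list coloring: \cref{t:newforced}, \cref{t:minorfrompieces}, \cref{t:density}, Mader's \cref{l:connect}, the Bollob\'as--Thomason toolkit, and \cref{l:rooted} concern only minors and densities, and can be invoked verbatim. Ordinary chromatic number, as distinct from list chromatic number, enters the proof of \cref{t:main} only through \cref{t:minconnectivity} and through the ``small graphs'' bound \cref{c:big}, which bounds $\chi(G[X])$ when $X$ is the union of the dense pieces extracted by \cref{t:newforced}. So the first task is to record list-coloring analogues of these two inputs; once they are in hand, the deduction of \cref{t:main} in \cref{s:outline} proceeds line by line with $\chi$ replaced by the list-chromatic number $\chi_\ell$, and delivers that every graph with no $K_t$ minor is $O(t(\log t)^{\beta})$-list-colorable for every $\beta > 1/4$ --- the natural list analogue of \cref{t:main}. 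It should be emphasized that \cref{c:ListHadwiger} itself is strictly stronger: a linear list bound implies the Linear Hadwiger's Conjecture for ordinary coloring, and the present method, whose $(\log t)^{1/4}$ overhead is forced by the requirement $r \ge \sqrt{\log t}/2$ in \cref{t:minorfrompieces}, cannot reach a linear bound. Thus the realistic deliverable of this approach is the polylogarithmic improvement, with the residual gap to a linear bound tied to \cref{q:1}.

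For the connectivity input I would adapt Kawarabayashi's argument behind \cref{t:minconnectivity} directly. Call a graph $G$ \emph{list-contraction-critical} if $\chi_\ell(H) < \chi_\ell(G)$ for every proper minor $H$ of $G$; a minimum counterexample to the list statement, being taken in a minor-closed class, is list-contraction-critical. Kawarabayashi's separator analysis --- locating a cut of order below the target threshold and contracting a carefully chosen side --- uses only that deleting or contracting a proper part strictly lowers the relevant coloring parameter together with standard facts about colorings of graphs with low-order separations; both features survive the passage to lists, and a suitable version of \cref{t:minconnectivity} for $\chi_\ell$ may in fact already be available in the literature on list-critical graphs. I expect this step to be essentially routine.

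The genuinely new piece, and the main obstacle, is a list-coloring replacement for \cref{c:big}: one needs that every $K_t$-minor-free graph $G$ with $\v(G) \le t(\log t)^{5/4}$ satisfies $\chi_\ell(G) = o\!\bigl(t(\log t)^{1/4}\bigr)$; a bound of the form $\chi_\ell(G) = O\!\bigl(t\log(\v(G)/t)\bigr)$, matching the order of \cref{c:big}, would more than suffice. The proof of \cref{c:big} peels off, via \cref{t:big}, subgraphs of chromatic number $\le t$ and adds up the palettes, using the additivity $\chi(G) \le \chi(G[V_1]) + \chi(G[V_2])$; this additivity \emph{fails} for $\chi_\ell$, so a different mechanism is required. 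The plan is to keep the iterative structure but to peel large independent sets rather than $t$-chromatic chunks: by \cref{t:DucMey} every $K_t$-minor-free graph, and hence every induced subgraph, has an independent set on at least a $\tfrac{1}{2(t-1)}$ fraction of its vertices, so after $O\!\bigl(t\log(\v(G)/t)\bigr)$ such deletions at most $t$ vertices remain. The difficulty is that ``delete an independent set and recurse'' does not interact well with lists once degrees are large --- a deleted independent set cannot always be reinserted, since a vertex may find its list exhausted by already-colored neighbours --- so one must interleave the deletions with a recoloring/absorption scheme that reserves, per round, a bounded budget of colors into which the removed set can always be placed, and prove that the total list-size demand remains of order $t\log(\v(G)/t)$ rather than inflating by a per-round overhead. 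Designing and analysing this bookkeeping is, I believe, the one substantial new idea needed; with it, together with the list version of \cref{t:minconnectivity} and the verbatim minor machinery, the $O(t(\log t)^{\beta})$ list-coloring bound follows, and only a positive answer to \cref{q:1} would then remain between this and \cref{c:ListHadwiger}.
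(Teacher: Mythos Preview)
The statement is a \emph{conjecture} that the paper does not prove; it would imply the open Linear Hadwiger's Conjecture (\cref{c:LinHadwiger}), and you yourself concede that the method cannot reach a linear bound. What you are actually proposing is a route to the weaker polylogarithmic list bound, recorded here as \cref{t:list} and established in the companion paper~\cite{NorPos20}. The approach there differs from your step-by-step transplant of the proof of \cref{t:main}: rather than seeking a list analogue of \cref{c:big} to color the union of the extracted dense pieces, \cite{NorPos20} first proves the structural \cref{t:connect} --- every graph with connectivity $\Omega(t(\log t)^{\beta})$ and no $K_t$ minor has at most $t(\log t)^{7/4}$ vertices --- using \cref{t:newforced}, \cref{t:minorfrompieces}, and a new essentially tight density bound for unbalanced bipartite graphs with no $K_t$ minor. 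With the \emph{entire} critical graph forced to be small, list-coloring variants of \cref{t:big} and \cref{t:minconnectivity} then finish. Both plans need a list version of the connectivity lemma; they diverge on when and how the small-graph coloring is obtained.

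Within your plan there is a genuine gap at precisely the step you flag as the main obstacle. Peeling large independent sets via \cref{t:DucMey} and reinserting them against lists does not work without more: a removed vertex may have arbitrarily many already-colored neighbours and hence an exhausted list, so no fixed per-round color reservation suffices, and the ``absorption scheme'' you invoke is left entirely unspecified. Until that mechanism is made concrete, the proposal does not establish even \cref{t:list}, let alone \cref{c:ListHadwiger}. The \cref{t:connect} detour --- bound the whole graph first, not just a union of pieces --- is how \cite{NorPos20} sidesteps this difficulty.
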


In~\cite{NorPos20} two of us extended \cref{t:main} to list coloring. 

\begin{thm}[\cite{NorPos20}]\label{t:list} For every $\beta > \frac 1 4 $,
	every graph with no $K_t$ minor is $O(t (\log  t)^{\beta})$-list-colorable.
\end{thm}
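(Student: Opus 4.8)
The plan is to follow the proof of \cref{t:main} almost verbatim, with the list chromatic number $\chi_\ell$ in place of $\chi$. After the usual reduction, fix $\delta>0$, set $k=t(\log t)^{\frac14+\delta}$, and let $G$ be a graph with no $K_t$ minor that is minimal with respect to the minor order subject to $\chi_\ell(G)\ge k$; then every proper minor of $G$ is $(k-1)$-list-colorable, and in particular so is every proper subgraph, so $G$ has minimum degree at least $k-1$ and hence $\d(G)\ge(k-1)/2$. The first ingredient needed beyond the present paper is a list-coloring analogue of \cref{t:minconnectivity} — that a graph which is critical for list coloring (in the minor order) with $\chi_\ell\ge k$ has connectivity $\Omega(k)$; I expect this to follow by adapting Kawarabayashi's argument (or from the literature on list-critical graphs), and it gives $\kappa(G)\ge C_{\ref{t:minorfrompieces}}\,t(\log t)^{1/4}$ for large $t$. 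Note that everything minor- or density-theoretic — \cref{t:newforced}, \cref{DenseSubgraph2}, \cref{kclawDense}, \cref{t:density}, \cref{l:connect}, \cref{t:minorfrompieces} itself — is used unchanged, since those statements never mention coloring.

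Next run the extraction step exactly as in \cref{t:main}: choose a maximal family $H_1,\dots,H_r$ of pairwise vertex-disjoint subgraphs of $G$ with $\d(H_i)\ge C_{\ref{t:minorfrompieces}}\,t(\log t)^{1/4}$ and $\v(H_i)\le t(\log t)^{3/4}$. If $r\ge\sqrt{\log t}/2$ then \cref{t:minorfrompieces} produces a $K_t$ minor, a contradiction; so $r<\sqrt{\log t}/2$ and $X:=\bigcup_i V(H_i)$ satisfies $|X|<t(\log t)^{5/4}$. The one step of the chromatic proof that does \emph{not} translate is ``$\chi(G[X])$ is small, hence $\chi(G\setminus X)\ge\chi(G)-\chi(G[X])$ is large'': this uses subadditivity of $\chi$, which has no analogue for $\chi_\ell$ (e.g.\ $\chi_\ell(K_{m,m})\to\infty$ although $\chi_\ell$ of either side is $1$). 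I would replace it by splitting on $\v(G)$. If $\v(G)$ is large — say at least a fixed power of $\log t$ times $t$ — then a direct edge count shows that either $G\setminus X$ contains a subgraph of density $\ge k/4$, to which \cref{t:newforced} applies (with $D=3.2t\sqrt{\log t}$, so that its alternative (i) already yields a $K_t$ minor via \cref{t:density}), producing a small dense subgraph that contradicts the maximality of $\{H_i\}$; or $\e(G\setminus X)$ is so small that a positive fraction of the vertices of $G\setminus X$ send $(1-o(1))k$ edges into the small set $X$, yielding a sufficiently unbalanced bipartite subgraph on which \cref{kclawDense} (or \cref{DenseSubgraph2}) again gives either a $K_t$ minor via \cref{t:density} or a subgraph contradicting maximality. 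Making these parameters close for every $\delta>0$, rather than just for $\delta$ bounded away from $0$, will require care and likely an inner iteration of the density-increment machinery.

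The remaining, and in my view hardest, case is $\v(G)$ small, say $\v(G)\le t(\log t)^{O(1)}$, where one needs directly that $\chi_\ell(G)<k$. This asks for a list-coloring analogue of \cref{c:big}: a bound of the shape $\chi_\ell(G)=O\big(t\log(\v(G)/t)\big)$ for $K_t$-minor-free $G$. I expect this to be the main obstacle, because the off-the-shelf tools do not suffice: the Kostochka--Thomason bound (\cref{t:KT}) gives only $\chi_\ell(G)=O(t\sqrt{\log t})$, which exceeds $t(\log t)^{\frac14+\delta}$; and although Reed--Seymour's fractional Hadwiger bound $\chi_f(G)=O(t)$ combines with the standard probabilistic bound $\chi_\ell(G)=O(\chi_f(G)\log\v(G))$ to give $\chi_\ell(G)=O(t\log\v(G))$, in the regime $\v(G)=t\cdot\mathrm{polylog}(t)$ this is only $O(t\log t)$, still too large. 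What is needed is a genuine list version of the Duchet--Meyniel/Woodall peeling argument (repeatedly removing a list-colorable induced subgraph spanning a constant fraction of the vertices), and the difficulty is precisely that in list coloring the colors used on the removed piece cannot be freely reused, so one must either carefully track how the lists of the surviving vertices shrink or exploit the clique-sum structure of $K_t$-minor-free graphs to control the interaction between pieces. Once such a bound is in hand, the large- and small-$\v(G)$ regimes together contradict the choice of $G$, completing the proof.
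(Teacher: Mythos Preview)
The paper does not actually prove \cref{t:list}; it only cites \cite{NorPos20} and sketches the approach taken there. That sketch differs from yours in organization: rather than adapting the proof of \cref{t:main} step by step and confronting the failure of subadditivity for $\chi_\ell$, \cite{NorPos20} first proves the purely structural \cref{t:connect}, which says that any graph with no $K_t$ minor and $\kappa(G)\ge Ct(\log t)^\beta$ already satisfies $\v(G)\le t(\log t)^{7/4}$. This cleanly eliminates your ``large $\v(G)$'' case before coloring is ever mentioned. The proof of \cref{t:connect} uses \cref{t:newforced}, \cref{t:minorfrompieces}, and what the paper calls ``a new essentially tight bound on the density of unbalanced bipartite graphs with no $K_t$ minor'' --- which is exactly the kind of tool you reach for (via \cref{kclawDense}) in your large-$\v(G)$ sketch. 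So your instinct there points in the right direction, but packaged as a standalone structural theorem it is much cleaner and avoids the somewhat hand-wavy edge-counting dichotomy you describe.

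With \cref{t:connect} in hand, everything reduces to your ``small $\v(G)$'' case, and here the paper agrees with your assessment of the remaining obstacles: one needs list-coloring variants of both \cref{t:minconnectivity} and \cref{t:big}. The paper confirms these are the new technical ingredients supplied in \cite{NorPos20} but gives no details, so your identification of the list analogue of \cref{t:big}/\cref{c:big} as the crux is accurate --- it is not something that can be extracted from the present paper, and your explanation of why the off-the-shelf bounds (the $O(t\sqrt{\log t})$ degeneracy bound, or $O(t\log\v(G))$ via fractional chromatic number) fall short in the regime $\v(G)=t\cdot\mathrm{polylog}(t)$ is correct. In short: your proposal is a reasonable plan with honestly flagged gaps; the paper's route is the same in spirit but reorganized so that all the non-coloring work is isolated in \cref{t:connect}, leaving only the two list-coloring lemmas to be supplied.
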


The key new ingredient in the proof of \cref{t:list} is the following bound on the size of sufficiently highly connected graphs with no $K_t$ minor.

\begin{thm}[\cite{NorPos20}]
\label{t:connect} For every  $ \beta > 1/4$ and every integer $t \geq 1$ there exists $C>0$ such that every graph $G$ with $\kappa(G) \geq Ct (\log t)^{\beta}$ and  no $K_t$ minor satisfies $\v(G) \leq t(\log t)^{7/4}$.
\end{thm}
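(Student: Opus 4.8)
\textbf{Proof plan for Theorem~\ref{t:connect}.}
The plan is to combine the density-increment machinery of \cref{t:newforced} with the Kostochka--Thomason bound (in the explicit form of \cref{t:density}), exactly as in the proof of \cref{t:main}, but now tracking the \emph{number of vertices} rather than the chromatic number. Let $G$ be a graph with $\kappa(G) \geq Ct(\log t)^{\beta}$ and no $K_t$ minor, where $C$ is chosen large with respect to $C_{\ref{t:newforced}}(\delta)$, $C_{\ref{t:minorfrompieces}}$, and $1/\delta$, for a suitable $\delta = \delta(\beta) > 0$ chosen so that $\beta - \delta > 1/4$; I may assume $\v(G)$ is large, since otherwise the bound $\v(G) \le t(\log t)^{7/4}$ is immediate. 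First I would extract, greedily, a maximal collection $H_1, \dots, H_r$ of pairwise vertex-disjoint subgraphs of $G$ with $\d(H_i) \geq C_{\ref{t:minorfrompieces}} t(\log t)^{1/4}$ and $\v(H_i) \le t(\log t)^{3/4}$; since $\kappa(G)$ is large enough, \cref{t:minorfrompieces} forces $r < \sqrt{\log t}/2$, so $X := \bigcup_i V(H_i)$ satisfies $|X| < t(\log t)^{5/4}$.

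The new point, compared to the proof of \cref{t:main}, is what to do with $G \setminus X$: we no longer have a chromatic-number bound to exploit, so instead I would bound its number of vertices directly via its \emph{density}. By maximality of the collection, $G \setminus X$ contains no subgraph $H$ with $\d(H) \ge C_{\ref{t:minorfrompieces}} t(\log t)^{1/4}$ and $\v(H) \le t(\log t)^{3/4}$. Applying \cref{t:newforced} to $G' := G \setminus X$ with $D := 3.2\, t\sqrt{\log t}$: alternative (i) would give a minor of density $\ge D$, hence a $K_t$ minor by \cref{t:density}, contradicting the choice of $G$; so alternative (ii) holds, producing a subgraph $H$ with $\v(H) \le s^{\delta} C_{\ref{t:newforced}} D^2/\d(G')$ and $\d(H) \ge s^{-\delta}\d(G')/C_{\ref{t:newforced}}$, where $s = D/\d(G')$. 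If $\d(G \setminus X)$ were at least, say, $C_{\ref{t:minorfrompieces}} t(\log t)^{1/4} \cdot \mathrm{(poly\ in\ }\log t\mathrm{)}$ — concretely, large enough that $s$ is bounded by a fixed power of $\log t$ — then the resulting $H$ would have $\d(H) \ge C_{\ref{t:minorfrompieces}} t(\log t)^{1/4}$ and $\v(H) \le t(\log t)^{3/4}$ for $t$ large (this is the same arithmetic as in the proof of \cref{t:main}, since $s \le 13(\log t)^{1/4-\delta}$ is subpolynomial), contradicting maximality. Hence $\d(G \setminus X)$ is bounded above by roughly $t\sqrt{\log t}$, so after iterating the peeling of low-degree vertices one sees $G \setminus X$ is $O(t\sqrt{\log t})$-degenerate; but that only bounds its edge count, not its vertex count directly.

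To convert the density bound on $G \setminus X$ into a vertex bound I would again invoke high connectivity: since $\kappa(G) \ge Ct(\log t)^\beta$ and $|X|$ is small compared with this, $G \setminus X$ is still highly connected, in particular $(Ct(\log t)^\beta - |X|)$-connected is not quite what we want — instead the right statement is that $G \setminus X$ has minimum degree at least $\kappa(G) - |X|$... which fails since $|X|$ can exceed $\kappa(G)$. So the correct route is different: I would not delete all of $X$ at once, but rather argue that if $\v(G)$ exceeds $t(\log t)^{7/4}$, then after removing $X$ (of size $< t(\log t)^{5/4}$) the remaining graph $G \setminus X$ still has more than $t(\log t)^{7/4} - t(\log t)^{5/4} \ge \frac12 t(\log t)^{7/4}$ vertices, while its density is $O(t\sqrt{\log t})$ by the argument above; then $G \setminus X$ has a minor of density $\ge D$ obtained by contracting — no. The cleanest fix, and the one I expect the authors use, is to observe that a graph $G'$ with bounded density but many vertices, \emph{together with} the global connectivity of $G$, still permits constructing one more dense disjoint subgraph: pick any $t(\log t)^{3/4}$-vertex subset, use $\kappa(G)$-connectivity and \cref{l:connect}/\cref{t:knitted}-style linking within $G$ to find a highly connected (hence dense) minor-model on few vertices disjoint from $X$, contradicting maximality once $\v(G \setminus X)$ is large enough to host it.

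\textbf{The main obstacle.} The genuinely delicate step is the last one: turning ``$G \setminus X$ is sparse but huge'' into a contradiction with the maximality of $H_1, \dots, H_r$. A sparse graph need not contain a small dense subgraph, so one cannot just invoke \cref{t:newforced} again as in the second paragraph once the density of $G \setminus X$ has dropped to $\Theta(t\sqrt{\log t})$ — that application only yields $H$ with $\v(H) = \Theta(D^2/\d) = \Theta(t(\log t)^{3/4})$ and $\d(H) = \Theta(t(\log t)^{1/4})$ provided $s = D/\d(G\setminus X) = \Theta(1)$, i.e. provided $\d(G \setminus X) = \Theta(t\sqrt{\log t})$ exactly, which is precisely the borderline case. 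So the argument must show that if $\d(G\setminus X)$ drops \emph{below} $c\, t\sqrt{\log t}$ then $G\setminus X$ is $O(t\sqrt{\log t})$-degenerate, peel a minimal subgraph $G''$ with $\delta(G'') > c'\, t\sqrt{\log t}$ (so $\d(G'') > c'' t\sqrt{\log t}$ and now \cref{t:newforced} applies with $s=O(1)$), and bound $\v(G'') \le $ (number of vertices removed) $+ \v(\text{rest})$; the accounting showing that $\v(G) \le t(\log t)^{7/4}$ falls out of $r < \sqrt{\log t}/2$, each $\v(H_i) \le t(\log t)^{3/4}$, and the degeneracy-based vertex count of the residual graph. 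Getting the exponent to land exactly at $7/4$ — not $5/4$ or $9/4$ — is where the bookkeeping has to be done carefully, balancing $|X| < t(\log t)^{5/4}$ against the $s^\delta D^2/\d$ blow-up in \cref{t:newforced}(ii) with $\d = \Theta(t\sqrt{\log t})$ giving $D^2/\d = \Theta(t\sqrt{\log t})$, times the $(\log t)^{O(\delta)}$ slack, iterated $O(\log t / \log\log t)$-ish times — which is exactly what forces the final $t(\log t)^{7/4}$.
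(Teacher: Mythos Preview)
This theorem is not actually proved in the present paper: it is quoted from \cite{NorPos20} in the concluding remarks, and the paper only summarizes the proof in one sentence, saying it ``relies on Theorems \ref{t:newforced} and \ref{t:minorfrompieces} and a new essentially tight bound on the density of unbalanced bipartite graphs with no $K_t$ minor.'' So there is no proof here to compare your proposal against line by line; but that one-sentence summary already names an ingredient your plan does not have.

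Your proposal uses \cref{t:newforced} and \cref{t:minorfrompieces} in the expected way, and you correctly isolate the real difficulty: after extracting $H_1,\dots,H_r$ and bounding $|X|<t(\log t)^{5/4}$, you must bound $\v(G\setminus X)$, and nothing in your toolkit does this. You try (i) a minimum-degree argument via $\kappa(G)-|X|$, which you yourself note fails since $|X|\sim t(\log t)^{5/4}$ can dwarf $\kappa(G)\sim t(\log t)^{\beta}$ for $\beta$ close to $1/4$; (ii) a vague appeal to global connectivity plus \cref{l:connect}/\cref{t:knitted} to manufacture one more dense piece, but you give no mechanism for why a huge sparse graph sitting inside a moderately connected host should contain a small dense subgraph disjoint from $X$; and (iii) a degeneracy peeling argument, which bounds edges, not vertices, and your suggestion that the residual vertex count ``falls out'' of an $O(\log t/\log\log t)$-fold iteration is not an argument. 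None of these closes the gap.

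The missing idea is precisely the third ingredient the paper flags: a density bound for \emph{unbalanced} bipartite graphs with no $K_t$ minor. High connectivity of $G$ lets one pass to a bipartite configuration in which one side is small and the other side is all of $G$ (or a large piece of it), with every vertex on the large side having $\Omega(\kappa(G))$ neighbours on the small side; a sharp bound on when such a bipartite graph is forced to contain a $K_t$ minor is what converts ``many vertices, each with $\Omega(t(\log t)^{\beta})$ neighbours into a bounded set'' into an upper bound on $\v(G)$. Without that lemma, the step from ``$G\setminus X$ is sparse but possibly enormous'' to ``$\v(G)\le t(\log t)^{7/4}$'' remains a genuine gap in your plan.
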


The proof of \cref{t:connect} relies on Theorems \ref{t:newforced} and \ref{t:minorfrompieces} and a new essentially tight bound on the density of unbalanced bipartite graphs with no $K_t$ minor. Note that combining Theorem \ref{t:connect} with Theorems \ref{t:big} and \ref{t:minconnectivity} immediately yields \cref{t:main}. In the proof of \cref{t:connect}  the last two ingredients are replaced by new technical variants, which are applicable to list coloring. 

\subsubsection*{Acknowledgements.} This paper combines the content of two preprints~\cite{NorSong19,Pos19}. In~\cite{NorSong19} the first and third author have shown that 
every graph with no $K_t$ minor is $O(t(\log{t})^{0.354})$-colorable. Subsequently, in~\cite{Pos19} the second author proved  \cref{DenseSubgraph2}, strengthening a similar result used in~\cite{NorSong19}, which yields the current bound. 

The research presented in this paper was in part completed during the visit of the third author to McGill University. Z-X. Song thanks the Department of Mathematics and Statistics, McGill University for its hospitality. 

L. Postle thanks Michelle Delcourt for helpful comments.

\bibliographystyle{alpha}
\bibliography{snorin}

\end{document}